\documentclass{amsart}

\usepackage{amssymb}
\usepackage[T1]{fontenc}

\newtheorem{theorem}{Theorem}[section]
\newtheorem{lemma}[theorem]{Lemma}
\newtheorem{proposition}[theorem]{Proposition}

\numberwithin{equation}{section}

\newcommand{\R}{\mathbf{R}}
\newcommand{\Z}{\mathbf{Z}}
\newcommand{\cG}{\mathcal{G}}
\newcommand{\cL}{\mathcal{L}}
\newcommand{\cS}{\mathcal{S}}
\newcommand{\eps}{\varepsilon}
\DeclareMathOperator{\supp}{\operatorname{supp}}
\DeclareMathOperator{\diam}{\operatorname{diam}}

\begin{document}

\title[Existence of a separating norm]
{Existence of a separating norm for the semilinear heat equation}

\author[J. Takahashi]{Jin Takahashi}
\address{Department of Mathematical and Computing Science, 
Institute of Science Tokyo, Tokyo 152-8552, Japan}
\email{takahashi.j.cf61@m.isct.ac.jp}

\subjclass[2020]{Primary 35K58, Secondary 35A01, 35B44, 35K15}
\keywords{Semilinear heat equation, global existence, blow-up, separating norm.}

\begin{abstract}
We consider the initial value problem of the semilinear heat equation 
in the Fujita supercritical range 
for which global-in-time solutions may exist. 
The main result shows the existence of 
a separating norm $\|\cdot\|_X$ in the sense that, 
for nonnegative initial data $u_0\in L^\infty(\R^n)$, 
the smallness of $\|u_0\|_X$ implies global-in-time existence,  
whereas the largeness of $\|u_0\|_X$ implies finite-time blow-up.  
The separating norm is unique up to equivalence 
on nonnegative data. 
\end{abstract}

\maketitle

\section{Introduction}
We consider the initial value problem of the following semilinear heat equation: 
\begin{equation}\label{eq:main}
\left\{ \begin{aligned}
	&u_t-\Delta u=u^p, && x\in \R^n,\; t>0, \\
	&u(x,0)=u_0(x), && x\in \R^n, \\
\end{aligned}
\right.
\end{equation}
where $n\geq1$ and  $p>1$. 
Throughout this paper, we assume that 
$u_0\in L^\infty(\R^n)$ is nonnegative 
and we only consider nonnegative solutions. 
The problem \eqref{eq:main} admits a unique classical $L^\infty$-solution 
with maximal existence time 
$0<T_*(u_0)\leq \infty$, 
see \cite[Definition 15.1, Propositions 16.1, 51.40]{QSbook2} for instance. 
By the pioneering work of Fujita \cite{Fu66} 
and subsequent developments \cite{Ha73,Su75}, 
it is well known that the exponent 
\[
	p_F:=\frac{n+2}{n}
\]
is critical for the occurrence of $T_*(u_0)=\infty$ in the sense that 
if $p\leq p_F$, then 
$T_*(u_0)< \infty$ for all $u_0\not \equiv 0$. 
If $p> p_F$, then 
$T_*(u_0)= \infty$ for some $u_0\not \equiv 0$.

In this paper, we focus on the case $p>p_F$ and 
separate global-in-time existence 
from finite-time blow-up 
in terms of the size of $u_0$ measured in a single norm. 
As in \cite[Subsection 20.5]{QSbook2}, 
such a norm is called a separating norm. 
More precisely, we say that a norm $\|\cdot\|$ 
is a separating norm if 
there exist constants $0<c<C<\infty$ satisfying the following: 
for nonnegative initial data $u_0\in L^\infty(\R^n)$, 
\[
\left\{
\begin{aligned}
	&\mbox{if $\|u_0\| \leq c$, then $T_*(u_0) =\infty$};  \\
	&\mbox{if $C\leq \|u_0\|\leq \infty$, then $T_*(u_0) <\infty$}. 
\end{aligned}
\right.
\]
By the $1$-homogeneity of norms, 
if a separating norm exists, 
then it is unique up to equivalence on nonnegative functions. 
The problem of finding a separating norm 
for nonnegative initial data $u_0\in L^\infty(\R^n)$ 
is listed in \cite[OP 2.2]{QSbook2}. 
A separating norm can be implicitly defined by 
$\| f\|:= \inf \{ a>0;\; |f|/a \in \cG\}$, 
where $\cG:= \{ f\in L^\infty(\R^n);\; f\geq0, T_*(f)=\infty\}$. 
However, our goal is to give an explicit one.

A candidate for an explicit separating norm 
is a Morrey norm 
\[
	\|f\|_{M^{q, \frac{2q}{p-1}}(\R^n)}
	:= \left( \sup_{x\in \R^n, \; r>0} r^{\frac{2q}{p-1}-n} 
	\int_{B_r(x)} |f(y)|^q dy \right)^\frac{1}{q}
	\quad \left( 1\leq q<\frac{n(p-1)}{2} \right), 
\]
where $n(p-1)/2>1$ for $p>p_F$ 
and $B_r(x):=\{ y\in \R^n; \; |x-y|<r\}$ for $x\in \R^n$ and $r>0$. 
We note that this is an invariant norm under the scaling 
\begin{equation}\label{eq:fscaling}
	f_\lambda(x) := \lambda^\frac{2}{p-1} f(\lambda x) 
	\quad (\lambda>0), 
\end{equation}
which corresponds to the natural scaling 
$u_\lambda(x,t):= \lambda^{2/(p-1)}u(\lambda x, \lambda^2 t)$ 
of \eqref{eq:main}. 
The Morrey norms almost have a separating property 
in the sense that 
\[
\left\{
\begin{aligned}
	&\mbox{if $\|u_0\|_{M^{q, \frac{2q}{p-1}}(\R^n)} \leq c$ 
	for some $q>1$, 
	then $T_*(u_0) =\infty$};  \\
	&\mbox{if $C\leq \|u_0\|_{M^{1, \frac{2}{p-1}}(\R^n)}\leq \infty$, 
	then $T_*(u_0) <\infty$}. 
\end{aligned}
\right.
\]
The global existence part is due to Souplet \cite[Proposition 6.1]{So17}  
and the blow-up part is due to Biler \cite[Corollary 3.8 (b)]{Bi18}. 
For further expositions, see \cite[Subsection 20.5]{QSbook2}. 
It remained an open question 
whether one could take $q=1$ in the global existence part. 
However, Theorem \ref{th:main} below shows that 
the choice $q=1$ is impossible, 
since the separating norm determined in this paper  
is strictly stronger than the $M^{1, 2/(p-1)}(\R^n)$-norm.

We introduce the following space and norm, 
which are given explicitly by 
\begin{align}
	&\notag
	X:=  \{ f\in L^\infty (\R^n); \, 
	\| f\|_X <\infty\}, \\
	&\label{eq:Xdef}
	\|f\|_X:= 
	\sup_{\substack{
	\phi\in C_0^\infty(\R^n)\\
	\phi\not\equiv 0 }}
	\frac{ \displaystyle 
	\int_{\R^n} |f(z)| |\phi(z)|^\frac{p}{p-1} dz}
	{ \displaystyle 
	\int_0^\infty \int_{\R^n} 
	\left| \int_{\R^n} \Delta_y G(y-z,s) \phi(z) dz \right|^\frac{p}{p-1} dy ds }, 
\end{align}
where $G(x,t):= (4\pi t)^{-n/2} e^{-|x|^2/(4t)}$ 
is the heat kernel on $\R^n$. 
By \cite[Proposition 2.30, Theorem 2.34]{BCDbook}, 
the denominator in \eqref{eq:Xdef} 
can be characterized 
by the $\dot{B}^{2/p}_{p/(p-1),p/(p-1)}$-norm of $\phi$. 
The basic properties of $X$ 
such as the scaling invariance 
and inclusion relations 
are summarized in Section \ref{sec:property}. 
In particular, Proposition \ref{pro:rel} below shows that 
$X$ lies between the scaling invariant Lorentz space 
and Morrey space. More precisely, 
\[
	L^{\frac{n(p-1)}{2},\infty}(\R^n) 
	\cap 
	L^\infty(\R^n) 
	\subsetneq X 
	\subsetneq M^{1,\frac{2}{p-1}}(\R^n) 
	\cap L^\infty(\R^n), 
\]
and so the $X$-norm is stronger than the $M^{1, 2/(p-1)}(\R^n)$-norm. 
Our main result shows that 
the $X$-norm is a separating norm.

\begin{theorem}\label{th:main}
Let $n\geq1$ and $p>p_F$. 
Then $\|\cdot\|_X$ is a 
separating norm. 
It is unique  up to equivalence 
on nonnegative functions. 
\end{theorem}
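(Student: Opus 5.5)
The plan has two halves matched to the two defining properties of a separating norm: a test-function (duality) argument for blow-up, and a supersolution argument for global existence. Uniqueness up to equivalence is automatic: if $\|\cdot\|$ and $\|\cdot\|'$ are both separating with constants $c,C$ and $c',C'$, then for $f\geq0$ with $\|f\|'>0$ the function $g:=c' f/\|f\|'$ is global. Hence $\|g\|<C$, i.e. $\|f\|\leq (C/c')\|f\|'$, and symmetrically. So the work is to prove that $\|\cdot\|_X$ is separating. By the scaling invariance of $X$ under \eqref{eq:fscaling} (Section \ref{sec:property}), I may normalize scales freely. I will also use the standard fact that the existence of a global nonnegative supersolution gives $T_*(u_0)=\infty$ by comparison.

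\emph{Blow-up part.} Suppose $T_*(u_0)=\infty$ and fix $\phi\in C_0^\infty(\R^n)$. I will test the equation against a backward-heat test function built from $\phi$: set $h(y,s):=\int G(y-z,s)\phi(z)\,dz$ and take $\eta(y,t)=|h(y,T-t)|^{p'}$, with $p'=p/(p-1)$ and $T\to\infty$ (after a cutoff in time).
\begin{itemize}
\item The weak formulation gives $\int u_0\,\eta(0)+\iint u^p\eta=-\iint u(\eta_t+\Delta\eta)$.
\item Because $h$ solves the backward heat equation, $\eta_t+\Delta\eta$ reduces to the term $p'(p'-1)|h|^{p'-2}|\nabla h|^2$. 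In a variant where the weight is placed on $\Delta h$ itself, it is controlled by $|h|^{p'-1}|\Delta h|$.
\item Young's inequality absorbs $\iint u^p\eta$ and leaves $\iint|\Delta h|^{p'}$, which is exactly the denominator of \eqref{eq:Xdef}.
\end{itemize}
Letting $T\to\infty$, $\int u_0\,\eta(0)\to\int u_0|\phi|^{p'}$ in the right normalization, so $\int u_0|\phi|^{p'}\leq C\int_0^\infty\!\int|\Delta h|^{p'}$, that is, $\|u_0\|_X\leq C$. Exactly how the power $p'$ is arranged so that the leftover term is $|\Delta_y\!\int G\phi|^{p'}$, rather than a gradient term, needs care. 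If the direct computation does not close, I would instead use Duhamel's formula $u(t)\geq\int_0^t e^{(t-s)\Delta}u^p\,ds$ together with $u\geq e^{t\Delta}u_0$ and dualize against $\phi$. Either way this part should be a fairly routine Hölder/Young argument.

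\emph{Global existence part.} The aim is to show that $w:=2e^{t\Delta}u_0$ is a global supersolution when $\|u_0\|_X$ is small. By Duhamel and comparison, it suffices to prove the pointwise estimate
\begin{equation*}
\int_0^t e^{(t-s)\Delta}\big(e^{s\Delta}u_0\big)^p\,ds\;\leq\; K\|u_0\|_X^{p-1}\,e^{t\Delta}u_0 \qquad (t>0),
\end{equation*}
because then $w$ is a supersolution as soon as $2^pK\|u_0\|_X^{p-1}\leq1$. This estimate is of Kalton--Verbitsky type for the integral equation $u=\mathcal K u^p+f$, with $f=e^{t\Delta}u_0$ and $\mathcal K$ the Duhamel operator, whose parabolic kernel satisfies a quasi-metric (weak maximum principle) condition. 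Kalton--Verbitsky theory says that such a pointwise inequality is equivalent, up to constants, to a weighted trace/testing inequality. I will identify that testing inequality, by duality and by computing $\mathcal K^*$ applied to $\phi$, with the inequality defining $\|u_0\|_X$: the adjoint of the Duhamel operator composed with the heat semigroup produces $\int G(y-z,s)\phi(z)\,dz$, and the time derivative appearing there turns into $\Delta_y$.

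\emph{Main obstacle.} The hardest step is this equivalence between the dual testing inequality and the pointwise bound, i.e. passing from the integral condition $\|u_0\|_X<\infty$ to the pointwise supersolution inequality. I would first try to verify the quasi-metric kernel hypotheses of Kalton--Verbitsky directly for the heat kernel in space-time, with the data given as a measure supported on $\{t=0\}$, and then invoke their theorem. Failing that, I would run a dyadic/capacity argument over parabolic cylinders adapted to the scaling \eqref{eq:fscaling}.
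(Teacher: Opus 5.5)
Your uniqueness argument is correct. In the existence half you also reduce to exactly the right pointwise estimate: it is Lemma~\ref{lem:Duhamel}, and with it $2e^{t\Delta}u_0$ is indeed a supersolution. However, neither of the two estimates that make up the theorem is actually established.

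\textbf{Blow-up half.} The test function you write down does not work. The function $w(y,t)=h(y,T-t)=e^{(T-t)\Delta}\phi$ solves the \emph{homogeneous} backward heat equation. Consequently:
\begin{itemize}
\item the only remaining term is $\partial_t\eta+\Delta\eta=p'(p'-1)|w|^{p'-2}|\nabla w|^2\geq0$, and it enters with a favorable sign;
\item no $\Delta h$ term is ever produced, so Young's inequality has nothing to turn into $\iint|\Delta h|^{p'}$;
\item $\eta(\cdot,0)=|e^{T\Delta}\phi|^{p'}\to0$, not $|\phi|^{p'}$.
\end{itemize}
What the computation really gives is $\int u_0|e^{T\Delta}\phi|^{p'}\leq\int u(T)|\phi|^{p'}$. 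A time cutoff only trades the terminal term for $\psi'$-terms, which carry Fujita/Morrey-type information rather than the $X$-norm.

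The missing idea is that the test function must solve the backward heat equation with a \emph{nonzero source} and equal $\phi$ at $t=0$. The source's $L^{p'}$-norm must be the denominator of \eqref{eq:Xdef}. This is the reproducing identity \eqref{eq:back}: $\phi=\cL[h_\phi]$ with $h_\phi=-2\Delta e^{t\Delta}\phi$.

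The paper tests with $v_\eps^{p'}\chi_R$, where $-\partial_tv-\Delta v=h$, $v(T)=0$ and $v(0)=\cL[h]$. The condition $v(T)=0$ kills the terminal term, and the $\eps\zeta$ term keeps $v_\eps>0$ when $p>2$. It then drops the gradient term by convexity and applies Young to $p'uv_\eps^{1/(p-1)}h-u^pv_\eps^{p'}$, which yields \eqref{eq:hsmooth}. Finally it inserts $|h_\phi|$.

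Note that $e^{t\Delta}\phi$ \emph{without} time reversal is exactly the $T=\infty$ version of this $v$. With $\eta=|e^{t\Delta}\phi|^{p'}$ you do get $\eta(0)=|\phi|^{p'}$ and a term $2p'|h|^{p'-2}h\Delta h$. You would then still have to show $\int u(T)|e^{T\Delta}\phi|^{p'}\to0$ for a global solution, which the paper's truncation avoids.

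Your Duhamel fallback is not a routine H\"older/Young step either. You give no mechanism by which $u\geq e^{t\Delta}u_0+\int_0^te^{(t-s)\Delta}u^p\,ds$ produces an inequality with this specific denominator.

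\textbf{Existence half.} The pointwise estimate is the heart of the theorem, and you only outsource it. Two things are unverified.
\begin{itemize}
\item You have not checked that the space-time heat kernel meets the hypotheses under which Kalton--Verbitsky show the pointwise condition is equivalent to a trace inequality. This kernel is a non-symmetric Volterra kernel, and the data $u_0\,dx\otimes\delta_{t=0}$ live on a hypersurface.
\item You have not identified that trace inequality with $\|u_0\|_X$. The inequality is $\int u_0|\cL[h]|^{p'}\leq C\iint|h|^{p'}$ for all $h$, i.e.\ \eqref{eq:hsmooth}. Deriving it from the $X$-norm needs $\cL\colon L^{p'}(\R^n\times(0,\infty))\to\dot B^{2/p}_{p/(p-1),p/(p-1)}$ bounded, together with the heat characterization of that space.
\end{itemize}

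The paper needs none of this. It splits $\int_0^t=\int_0^{t/2}+\int_{t/2}^t$.
\begin{itemize}
\item On $(t/2,t)$ it uses $\|U(s)\|_{L^\infty}\leq C\|u_0\|_Xs^{-1/(p-1)}$ (Lemma~\ref{lem:Uesti}). This comes from testing \eqref{eq:Xdef} with $G^{(p-1)/p}(x-\cdot,t)$.
\item On $(0,t/2)$ the Gaussian bound $G^{1/p}(x-y,t-s)G(y-z,s)\leq C\,G^{1/p}(x-z,t)G(y-z,\frac{p}{p-1}s)$ reduces everything to $\iint V^p$ (Lemma~\ref{lem:former}).
\end{itemize}
That quantity is controlled by a self-improving duality argument (Lemma~\ref{lem:Vp}). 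Write $V^p=V\cdot V^{p-1}$ and test \eqref{eq:Xdef} with $z\mapsto\iint G(y-z,s)V^{p-1}\,dy\,ds$. Bound the resulting denominator by $C\iint V^p$ via Minkowski's inequality and a Hardy-type integral in $\sigma=s/\tau$, then absorb. An argument of this kind, or a real verification of the Kalton--Verbitsky route, is what your proposal is missing.
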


In the rest of this paper, 
we summarize properties of $X$ in Section \ref{sec:property}. 
The proof of Theorem \ref{th:main} 
is given in Sections \ref{sec:gex} and \ref{sec:ex}. 
In Section \ref{sec:gex}, 
by using a suitable choice of test function 
based on the backward heat equation, 
we show that 
the initial data of global-in-time solutions satisfy
$\| u_0\|_X \leq C_p$ 
with an explicit $C_p>0$ depending only on $p$. 
In Section \ref{sec:ex}, 
we construct a global-in-time solution 
for $\|u_0\|_X\leq c $ with $c>0$ small. 
The method is based on the construction of a supersolution 
to the integral equation corresponding to \eqref{eq:main}.

\section{Properties of the function space}\label{sec:property}
In this section, we summarize the properties of the space $(X,\|\cdot\|_X)$. 
Let $G(x,t)= (4\pi t)^{-n/2} e^{-|x|^2/(4t)}$ 
be the heat kernel on $\R^n$. 
First, we see that the denominator in \eqref{eq:Xdef} 
is positive and finite for $\phi\not \equiv 0$. 

\begin{lemma}
For any Schwartz function $\phi\in \cS(\R^n)$ with $\phi\not \equiv 0$, 
\[
	0<
	\int_0^\infty \int_{\R^n} 
	\left| \int_{\R^n} \Delta_y G(y-z,s) \phi(z) dz \right|^\frac{p}{p-1} dy ds
	<\infty. 
\]
In particular, 
the fraction in \eqref{eq:Xdef} 
is finite for $\phi\in C_0^\infty(\R^n)$ with 
$\phi\not \equiv 0$. 
\end{lemma}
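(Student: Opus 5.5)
Write $q:=p/(p-1)\in(1,\infty)$ and $w(y,s):=\int_{\R^n}\Delta_y G(y-z,s)\phi(z)\,dz=\Delta(e^{s\Delta}\phi)(y)=e^{s\Delta}\Delta\phi(y)$. The quantity to control is $I:=\int_0^\infty\|w(\cdot,s)\|_{L^q}^q\,ds$. Since $\Delta\phi\in\cS(\R^n)$, the map $s\mapsto w(\cdot,s)$ is continuous into $L^q$ and bounded by $\|\Delta\phi\|_{L^q}$. So the only issue for finiteness is integrability as $s\to\infty$. Positivity is easy. If $I=0$, then $w(\cdot,s)=0$ for a.e.\ $s$. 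On the Fourier side this gives $-|\xi|^2e^{-s|\xi|^2}\hat\phi(\xi)=0$, hence $\hat\phi\equiv0$ off the origin, so $\phi\equiv0$ by continuity of $\hat\phi$. This contradicts $\phi\not\equiv0$.

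For finiteness I split the time integral at $s=1$. On $(0,1)$, Young's inequality with $\|G(\cdot,s)\|_{L^1}=1$ gives $\|w(\cdot,s)\|_{L^q}\le\|\Delta\phi\|_{L^q}$, so this part is at most $\|\Delta\phi\|_{L^q}^q$. On $(1,\infty)$ I will put the derivatives on the kernel, $w(\cdot,s)=(\Delta G(\cdot,s))*\phi$, and apply Young's inequality in two forms. The first is
\[
\|w(\cdot,s)\|_{L^q}\le\|\Delta G(\cdot,s)\|_{L^q}\|\phi\|_{L^1}=C s^{-1-\frac n2(1-\frac1q)}\|\phi\|_{L^1}=Cs^{-1-\frac{n}{2p}}\|\phi\|_{L^1},
\]
where I used the scaling $\Delta G(x,s)=s^{-1-n/2}(\Delta G)(x/\sqrt s,1)$ and $1-1/q=1/p$. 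Raising to the power $q$, the exponent becomes $q(1+\frac{n}{2p})=\frac{p}{p-1}+\frac{n}{2(p-1)}>1$. So $\int_1^\infty s^{-q(1+n/(2p))}ds<\infty$, and this holds for every $p>1$ and $n\ge1$. The fact that makes it work is that $\Delta G$ has two derivatives, which gives the extra factor $s^{-1}$ and makes the power integrable regardless of dimension.

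Combining the two pieces gives $I\le\|\Delta\phi\|_{L^q}^q+C\|\phi\|_{L^1}^q<\infty$ for $\phi\in\cS(\R^n)$. This bound also justifies the measurability and Fubini manipulations implicitly. For $\phi\in C_0^\infty(\R^n)\subset\cS(\R^n)$ with $\phi\not\equiv0$, the denominator in \eqref{eq:Xdef} is therefore positive and finite. The numerator satisfies $\int|f||\phi|^{q}\le\|f\|_{L^\infty}\|\phi\|_{L^q}^q<\infty$ because $f\in L^\infty$, so the fraction is finite. I expect no real obstacle. The one point needing care is getting the large-time decay exponent right, which comes from the scaling of $\|\Delta G(\cdot,s)\|_{L^q}$ and the identity $1-1/q=1/p$.
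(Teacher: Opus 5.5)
Your proof is correct and matches the paper's: the same split at $s=1$, the smoothing bound $\|G(\cdot,s)*\Delta\phi\|_{L^{q}}\le\|\Delta\phi\|_{L^{q}}$ on $(0,1)$, and Young's inequality with $\|\Delta G(\cdot,s)\|_{L^{q}}^{q}\le C s^{-\frac{n}{2(p-1)}-\frac{p}{p-1}}$ on $(1,\infty)$. For positivity you work on the Fourier side rather than letting $s\to0$ to get $\Delta\phi=0$ for $\phi\in\cS(\R^n)$, but both rest on the same fact, namely that only $\phi\equiv0$ satisfies $|\xi|^2\hat\phi\equiv0$.
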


\begin{proof}
By the integration by parts and the smoothing estimate, 
\[
\begin{aligned}
	&\int_0^1 \int_{\R^n} 
	\left| \int_{\R^n} \Delta_y G(y-z,s) \phi(z) dz \right|^\frac{p}{p-1} dy ds \\
	&= 
	\int_0^1 \int_{\R^n} 
	\left| \int_{\R^n}  G(y-z,s) \Delta_z \phi(z) dz \right|^\frac{p}{p-1} dy ds 
	\leq 
	\| \Delta \phi\|_{L^\frac{p}{p-1}(\R^n)}^\frac{p}{p-1}. 
\end{aligned}
\]
From the Young inequality and direct computations, it follows that 
\[
\begin{aligned}
	&\int_1^\infty \int_{\R^n} 
	\left| \int_{\R^n} \Delta_y G(y-z,s) \phi(z) dz \right|^\frac{p}{p-1} dy ds \\
	&\leq
	\int_1^\infty 
	\| \Delta G(\cdot,s) \|_{L^\frac{p}{p-1}(\R^n)}^\frac{p}{p-1} 
	\| \phi \|_{L^1(\R^n)}^\frac{p}{p-1} ds 
	\leq 
	C\| \phi \|_{L^1(\R^n)}^\frac{p}{p-1} \int_1^\infty  
	s^{-\frac{n}{2}\cdot \frac{1}{p-1} - \frac{p}{p-1}} ds. 
\end{aligned}
\]
Since the above time integral is finite, we obtain 
\begin{equation}\label{eq:forcutoff}
\begin{aligned}
	&\int_0^\infty 
	\int_{\R^n} 
	\left| \int_{\R^n} \Delta_y G(y-z,s) \phi(z) dz 
	\right|^\frac{p}{p-1} dy ds \\
	&\leq C\| \Delta \phi \|_{L^\frac{p}{p-1}(\R^n)}^\frac{p}{p-1}
	+ C\| \phi \|_{L^1(\R^n)}^\frac{p}{p-1} 
	\quad \mbox{ for }\phi \in \cS(\R^n), 
\end{aligned}
\end{equation}
where $C>0$ depends only on $n$ and $p$. 
Thus, the finiteness in the statement follows. 

To prove the positivity for $\phi\not \equiv 0$, 
it suffices to show that if 
\[
	\int_0^\infty 
	\int_{\R^n} 
	\left| \int_{\R^n} \Delta_y G(y-z,s) \phi(z) dz 
	\right|^\frac{p}{p-1} dy ds =0, 
\]
then $\phi\equiv 0$. 
Under this assumption, the integration by parts gives 
$\int_{\R^n} G(y-z,s) \Delta_z \phi(z) dz =0$ for $s>0$. 
Letting $s\to 0$ yields $\Delta \phi=0$ in $\R^n$. 
Since $\phi\in \cS(\R^n)$, we obtain $\phi\equiv 0$. 
This implies the desired positivity. 
\end{proof}

Next, we check that 
$X$ is trivial in the case $p\leq p_F$. 

\begin{lemma}\label{lem:trivial}
Let $1<p\leq p_F$. Then $X=\{0\}$. 
\end{lemma}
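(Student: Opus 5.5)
We show that for $1<p\le p_F$ and any $f\in L^\infty(\R^n)$ with $f\not\equiv0$ the quotient in \eqref{eq:Xdef} is unbounded, so $\|f\|_X=\infty$ and $X=\{0\}$. The tool is a scaling argument with dilated test functions. Fix $\psi\in C_0^\infty(\R^n)$ with $0\le\psi\le1$, $\psi=1$ on $B_1(0)$, and set $\phi_R(z):=\psi(z/R)$ for $R\ge1$.

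\emph{Denominator.} Changing variables $y=Ry'$, $z=Rz'$, $s=R^2s'$ and using $\Delta_yG(y-z,s)=R^{-n-2}(\Delta G)((y-z)/R,s/R^2)$, we get
\[
\int_{\R^n}\Delta_yG(y-z,s)\phi_R(z)\,dz = R^{-2}\bigl(\Delta e^{s'\Delta}\psi\bigr)(y').
\]
Hence the denominator equals $R^{n+2-\frac{2p}{p-1}}D(\psi)$, where $D(\psi)\in(0,\infty)$ is the denominator for $\psi$, finite by the preceding lemma. The exponent is
\[
n+2-\frac{2p}{p-1}=n-\frac{2}{p-1},
\]
which is $\le0$ exactly when $p\le p_F$.

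\emph{Numerator.} Since $\psi=1$ on $B_1(0)$,
\[
\int_{\R^n}|f|\,|\phi_R|^{\frac{p}{p-1}}\,dz\ \ge\ \int_{B_R(0)}|f|\,dz,
\]
which is nondecreasing in $R$ and, since $f\not\equiv0$, eventually at least some $\delta>0$.

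\emph{Conclusion.} If $p<p_F$, the quotient is at least $\delta R^{\frac{2}{p-1}-n}D(\psi)^{-1}\to\infty$ as $R\to\infty$. In the critical case $p=p_F$ the denominator is scale invariant and this argument fails; this is the main obstacle. For $p=p_F$ I would use a logarithmic cutoff family instead. Take $\phi_R$ equal to $1$ on $B_R(0)$, decaying like $\log(R^2/|z|)/\log R$ on $R\le|z|\le R^2$, and vanishing beyond $R^2$, then mollified. The numerator is still at least $\int_{B_R}|f|\ge\delta$. One would then need to show that the denominator, which is comparable to the $\dot B^{2/p}_{p',p'}$-norm (equivalently the dyadic sum $\sum_j\|\Delta_j\phi_R\|_{p'}^{p'}2^{2jp'/p}$, which is scale invariant exactly at $p=p_F$), tends to $0$ as $R\to\infty$. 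Since each of the $\sim\log R$ dyadic annuli contributes about $(\log R)^{-p'}$, the total is about $(\log R)^{1-p'}\to0$ because $p'>1$.

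An alternative for $p=p_F$ is to quote the Fujita blow-up result and the global-existence direction (initial data of global solutions satisfy $\|u_0\|_X\le C_p$, proved in Section \ref{sec:gex}). If $0\ne f\in X$, then $\varepsilon|f|$ would have small $X$-norm, and the existence part of Theorem \ref{th:main}, if proved independently of $p>p_F$, would give a global solution, contradicting Fujita. However, that existence construction likely uses $p>p_F$, so I would present the direct log-cutoff computation. The only delicate estimate there is the Besov-type bound for the logarithmic cutoff, which one checks via \eqref{eq:forcutoff}-type splitting on each dyadic scale.
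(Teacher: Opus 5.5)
\paragraph{Verdict.} For $p<p_F$ your argument is exactly the paper's scaling argument. For $p=p_F$ your idea is also the paper's, but the one estimate that carries the critical case is only asserted, and with the cutoff as you describe it that estimate can fail.

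\paragraph{The gap at $p=p_F$.} Write $p':=p/(p-1)$. The paper's test function $\psi_N=\frac1N\sum_{j=1}^N\phi(2^{-j}\cdot)$ is a smoothed logarithmic cutoff, roughly $(N-k)/N$ on $|z|\sim 2^k$. Its denominator bound $CN^{1-p'}$ is your $(\log R)^{1-p'}$ with $N\sim\log R$. Suppose \emph{mollified} means convolution with a fixed mollifier:
\begin{itemize}
\item At $|z|=R$ the radial slope of $\phi_R$ jumps by $1/(R\log R)$. So $|\Delta\phi_R|\gtrsim 1/(R\log R)$ on a shell of volume $\sim R^{n-1}$, with no cancellation since $\Delta\phi_R\le 0$ nearby.
\item For $s\in(0,1)$, $e^{s\Delta}$ barely changes this.
\item Hence the denominator in \eqref{eq:Xdef} is at least $cR^{n-1-p'}(\log R)^{-p'}=cR^{(n-4)/2}(\log R)^{-p'}$, using $p'=(n+2)/2$ at $p=p_F$.
\item This tends to $\infty$ for $n\ge5$, and the corner at $|z|=R^2$ is worse.
\end{itemize}
Your count ``each dyadic annulus contributes $(\log R)^{-p'}$'' needs two things you do not supply. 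First, the cutoff must be smooth at scale comparable to $|z|$ everywhere, including both transition regions. Second, at a fixed time $s$ every spatial scale enters $\Delta e^{s\Delta}\phi_R$, so the cross-scale interaction has to be controlled.

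\paragraph{How the paper closes it.} The choice of $\psi_N$ handles both points, since each $\phi(2^{-j}\cdot)$ is smooth at its own scale $2^j$. Time is split into $(0,4)$, the blocks $[4^k,4^{k+1})$ for $1\le k\le N$, and $[4^{N+1},\infty)$. On $[4^k,4^{k+1})$:
\begin{itemize}
\item For $j\le k$, use the smoothing estimate $\|\Delta e^{s\Delta}\phi(2^{-j}\cdot)\|_{p'}\le Cs^{-1}2^{jn/p'}$.
\item For $j>k$, move the Laplacian onto the data: $\|e^{s\Delta}\Delta[\phi(2^{-j}\cdot)]\|_{p'}\le C2^{j(n/p'-2)}$.
\item At $p=p_F$ one has $n/p'-2=-2/p'<0<n/p'$. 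So both geometric sums are dominated by $j\approx k$, giving $\|\Delta e^{s\Delta}\psi_N\|_{p'}\le \frac{C}{N}4^{-k/p'}$.
\item This yields $CN^{-p'}$ per block and $CN^{1-p'}$ in total.
\end{itemize}
The two end intervals contribute $O(N^{-p'})$; the last one does so because $n+2-2p'=0$. Fatou's lemma with $\psi_N\to1$ then finishes.

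\paragraph{Your discarded alternative.} The estimates in Section~\ref{sec:ex} use only smallness of $\|u_0\|_X$, not $p>p_F$, so that route would actually work. However, it imports the critical-case Fujita blow-up theorem, whereas the direct argument is self-contained.
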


\begin{proof}
We first consider the case $1<p<p_F$. 
Let $f\in X$ and let $\phi\in C_0^\infty(\R^n)$ 
satisfy $0\leq \phi\leq 1$ 
and $\phi=1$ on $B_1(0)$. 
For $R>1$, set $\phi_R(x):= \phi(x/R)$. 
From \eqref{eq:Xdef} 
and the definition of the heat kernel 
$G(x,t)= (4\pi t)^{-n/2} e^{-|x|^2/(4t)}$, 
and then from $n-(2/(p-1)) <0$ by $p<p_F$, it follows that 
\[
\begin{aligned}
	&\int_{\R^n} |f(z)| |\phi_R(z)|^\frac{p}{p-1} dz 
	\leq \| f\|_X 
	\int_0^\infty \int_{\R^n} 
	\left| \int_{\R^n} \Delta_y G(y-z,s) \phi_R(z) dz \right|^\frac{p}{p-1} dy ds \\
	&= 
	\| f\|_X 
	\int_0^\infty \int_{\R^n} 
	\left| R^{-2} \int_{\R^n} \Delta_\eta G(\eta -\zeta, \tau) \phi(\zeta) d\zeta 
	\right|^\frac{p}{p-1} R^{n+2} d\eta d\tau \\
	&\leq 
	C_\phi R^{n-\frac{2}{p-1}} \| f\|_X
	\to 0 \quad \mbox{ as }R\to\infty, 
\end{aligned}
\]
where $C_\phi>0$ depends only on $n$, $p$ and $\phi$.  
Since the left-hand side is bounded from below by 
$\int_{B_R(0)} |f(z)| dz$, 
we obtain $f\equiv 0$ and $X=\{0\}$ for $p<p_F$.

We next consider the case $p=p_F$. 
Let $f\in X$. We use the same $\phi$ and define 
\[
	\psi_N(x):= \frac{1}{N} \sum_{j=1}^N \phi(2^{-j} x)
	\quad (N=1,2,\ldots). 
\]
Then by \eqref{eq:Xdef} and a decomposition of the time interval, 
\[
\begin{aligned}
	&\int_{\R^n} |f(z)| |\psi_N(z)|^\frac{p}{p-1} dz 
	\leq 
	\| f\|_X 
	\int_0^\infty \int_{\R^n} 
	\left| \int_{\R^n} \Delta_y G(y-z,s) \psi_N(z) dz 
	\right|^\frac{p}{p-1} dy ds  \\
	&= \| f\|_X  \left( \int_0^4 
	+ \sum_{k=1}^N \int_{4^k}^{4^{k+1}} 
	+ \int_{4^{N+1}}^\infty \right)
	\int_{\R^n} 
	\left| \int_{\R^n} \Delta_y G(y-z,s) \psi_N(z) dz 
	\right|^\frac{p}{p-1} dy ds \\
	&=: \| f\|_X  \left( I_N^{(1)} + I_N^{(2)} + I_N^{(3)} \right). 
\end{aligned}
\]

We consider $I_N^{(2)}$. 
From the smoothing estimate for $j\leq k$ 
and the integration by parts for $j>k$, it follows that 
\[
\begin{aligned}
	&\left\| \int_{\R^n} \Delta_y G(\cdot-z,s) \psi_N(z) dz 
	\right\|_{L^\frac{p}{p-1}(\R^n)}  \\
	&\leq \frac{C}{N} \left\| 
	\sum_{j=1}^k \int_{\R^n} \Delta_y G(\cdot-z,s) \phi(2^{-j} z) dz 
	\right\|_{L^\frac{p}{p-1}(\R^n)}    \\
	&\quad 
	+ \frac{C}{N} \left\| 
	\sum_{j=k+1}^N 4^{-j} \int_{\R^n} G(\cdot-z,s) \Delta \phi(2^{-j} z) dz 
	\right\|_{L^\frac{p}{p-1}(\R^n)}  \\
	&\leq 
	\frac{C_\phi}{N} s^{-1} \sum_{j=1}^k 2^{\frac{n(p-1)}{p}j}
	+\frac{C_\phi}{N} \sum_{j=k+1}^N 2^{(\frac{n(p-1)}{p}-2)j}. 
\end{aligned}
\]
By estimating the right-hand side with $p=p_F$ and 
$s^{-1}\leq 4^{-k}$ for $4^k \leq s <4^{k+1}$, 
\[
	\left\| \int_{\R^n} \Delta_y G(\cdot-z,s) \psi_N(z) dz 
	\right\|_{L^\frac{p}{p-1}(\R^n)} 
	\leq 
	\frac{C_\phi}{N} 4^{-\frac{p-1}{p}k}. 
\]
Therefore, there exists $C_\phi>0$ depending only on $n$, $p$ and $\phi$ 
such that 
\[
\begin{aligned}
	I_N^{(2)} &\leq 
	\sum_{k=1}^N \int_{4^k}^{4^{k+1}} 
	\left( \frac{C_\phi}{N} 4^{-\frac{p-1}{p}k} \right)^\frac{p}{p-1} ds 
	\leq C_\phi N^{1-\frac{p}{p-1}}. 
\end{aligned}
\]

As for $I_N^{(1)}$ and $I_N^{(3)}$, 
by the integration by parts for $I_N^{(1)}$ 
and by the smoothing estimate for $I_N^{(3)}$, 
we can see from 
similar computations to the above that 
\[
	I_N^{(1)} \leq C_\phi N^{-\frac{p}{p-1}}, 
	\quad 
	I_N^{(3)} \leq C_\phi N^{-\frac{p}{p-1}}. 
\]
Then by Fatou's lemma with $\psi_N(x)\to1$ as $N\to\infty$, 
\[
\begin{aligned}
	\int_{\R^n} |f(z)| dz 
	&\leq 
	\liminf_{N\to\infty} 
	\int_{\R^n} |f(z)| |\psi_N(z)|^\frac{p}{p-1} dz  \\
	&\leq 
	\liminf_{N\to\infty}  
	C_\phi \| f\|_X 
	\left(  N^{1-\frac{p}{p-1}} +  N^{-\frac{p}{p-1}} \right) = 0. 
\end{aligned}
\]
We obtain $f\equiv 0$ and $X=\{0\}$ also for $p=p_F$. 
\end{proof}

In view of Lemma \ref{lem:trivial}, 
we focus on the case $p>p_F$. 
We next check that 
$\| \cdot\|_X$ is a scaling invariant norm under the scaling \eqref{eq:fscaling}.

\begin{lemma}
Let $p> p_F$ and $f\in X$. 
Then $\|f_\lambda\|_X = \|f\|_X$ for $\lambda>0$. 
\end{lemma}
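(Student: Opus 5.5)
The plan is to prove the invariance by a change of variables in the test function. The key observation is that the map $\phi\mapsto\phi_\mu$, $\phi_\mu(z):=\phi(\mu z)$, is a bijection of $C_0^\infty(\R^n)\setminus\{0\}$ onto itself for every $\mu>0$. Given $\phi$, we compare the fraction in \eqref{eq:Xdef} for the pair $(f_\lambda,\phi)$ with the fraction for the pair $(f,\phi_{1/\lambda})$, where $\phi_{1/\lambda}(z)=\phi(z/\lambda)$. We will show these two fractions coincide. Taking the supremum over $\phi$ then gives $\|f_\lambda\|_X=\|f\|_X$. In particular $f_\lambda\in X$, since $f_\lambda\in L^\infty$ trivially.

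\textbf{Numerator.} Substituting $w=\lambda z$ gives
\[
\int_{\R^n} \lambda^{\frac{2}{p-1}}|f(\lambda z)||\phi(z)|^{\frac{p}{p-1}}dz
= \lambda^{\frac{2}{p-1}-n}\int_{\R^n} |f(w)||\phi(w/\lambda)|^{\frac{p}{p-1}}dw .
\]

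\textbf{Denominator.} We use the heat kernel scaling $G(x,t)=\mu^{n}G(\mu x,\mu^2 t)$, which implies $\Delta_y G(y,s)=\mu^{n+2}(\Delta G)(\mu y,\mu^2 s)$. Set $\psi=\phi_{1/\lambda}$, so that $\phi(z)=\psi(\lambda z)$. With $\mu=\lambda$ and the substitution $\zeta=\lambda z$,
\[
\int_{\R^n}\Delta_y G(y-z,s)\phi(z)\,dz=\lambda^{2}\,\Big(\int_{\R^n}\Delta G(\cdot-\zeta,\lambda^2 s)\psi(\zeta)\,d\zeta\Big)(\lambda y).
\]
Raise this to the power $p/(p-1)$ and substitute $\eta=\lambda y$, $\tau=\lambda^2 s$; the Jacobian contributes $\lambda^{-n-2}$. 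The denominator for $\phi$ therefore equals the denominator for $\psi$ multiplied by
\[
\lambda^{\frac{2p}{p-1}-n-2}=\lambda^{\frac{2}{p-1}-n}.
\]

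\textbf{Conclusion.} The numerator and denominator carry the same factor $\lambda^{2/(p-1)-n}$, so the fractions coincide. By the preceding lemma the denominators are positive and finite, so the quotient is well defined throughout. The only real care needed is in tracking the exponents in the scaling identity $\Delta_yG(y,s)=\lambda^{n+2}(\Delta G)(\lambda y,\lambda^2 s)$; this computation is the same one already used in the proof of Lemma \ref{lem:trivial}. No hypothesis $p>p_F$ is needed beyond ensuring that $X$ is nontrivial.
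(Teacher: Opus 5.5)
Your proof is correct and is essentially the paper's argument. The paper pairs $f_\lambda$ with $\phi(\lambda\,\cdot)$ and $f$ with $\phi$, while you pair $f_\lambda$ with $\phi$ and $f$ with $\phi(\cdot/\lambda)$; this is the same change of variables, and in both versions the numerator and denominator pick up the common factor $\lambda^{\frac{2}{p-1}-n}$, with your explicit bijection remark on $C_0^\infty(\R^n)\setminus\{0\}$ supplying the step the paper leaves implicit when passing to the suprema.
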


\begin{proof}
Let $\phi\in C_0^\infty(\R^n)$ and 
set $\phi_\lambda(x):=\phi(\lambda x)$. 
Then by the change of variables, 
\[
\begin{aligned}
	&\int_{\R^n} |f_\lambda(z)| |\phi_\lambda(z)|^\frac{p}{p-1} dz 
	= \lambda^{\frac{2}{p-1}-n} 
	\int_{\R^n} |f(\zeta)| |\phi(\zeta)|^\frac{p}{p-1} d\zeta, \\
	&\int_0^\infty \int_{\R^n} 
	\left| \int_{\R^n} \Delta_y G(y-z,s) \phi_\lambda(z) dz 
	\right|^\frac{p}{p-1} dy ds \\
	&=
	\lambda^{\frac{2}{p-1}-n} 
	\int_0^\infty \int_{\R^n} 
	\left| \int_{\R^n} \Delta_\eta G(\eta-\zeta,\tau) 
	\phi(\zeta) d\zeta \right|^\frac{p}{p-1} d\eta d\tau. 
\end{aligned}
\]
This together with the definition of $\|\cdot\|_X$ in \eqref{eq:Xdef} 
shows the lemma. 
\end{proof}

We next show the nontriviality of $X$ for $p>p_F$ 
by examining relations 
between $\|\cdot\|_X$ and known scaling invariant norms. 
Let $p>p_F$. We define 
a Lorentz type space $L$ 
and a Morrey type space $M$ 
as subspaces of $L^\infty(\R^n)$ by 
\[
\begin{aligned}
	&L:= \left\{ 
	f\in L^\infty(\R^n);\; 
	\|f\|_{L^{q_c,\infty}(\R^n)}<\infty \right\}, 
	\quad q_c:= \frac{n(p-1)}{2}, \\
	&\|f\|_{L^{q_c,\infty}(\R^n)}:= 
	\sup_{\mu>0} \left( 
	\mu \cdot | \{ x\in \R^n;\; |f(x)| > \mu \} |^\frac{1}{q_c} \right), \\
	&M:= \left\{ 
	f\in L^\infty(\R^n);\; 
	\|f\|_{M^{1,\lambda_c}(\R^n)}<\infty \right\}, 
	\quad \lambda_c:= \frac{2}{p-1}, \\
	&\|f\|_{M^{1,\lambda_c}(\R^n)}:= 
	\sup_{x\in \R^n, r>0} r^{\lambda_c-n} \int_{B_r(x)} |f(y)| dy. 
\end{aligned}
\] 
Note that $q_c>1$ and $\lambda_c-n<0$ for $p>p_F$. 
Moreover, the above Morrey norm and Lorentz (quasi-)norm 
are invariant under the scaling \eqref{eq:fscaling}. 
We prove the following relations. 
This particularly shows that $X$ 
contains nontrivial functions when $p>p_F$. 
For example, $(1+|\cdot|)^{-2/(p-1)} \in X$.

\begin{proposition}\label{pro:rel}
Let $p>p_F$. 
Then there exist $C_1,C_2>0$ depending only on $n$ and $p$ such that 
\[
	\|f_1\|_X
	\leq C_1 \|f_1\|_{ L^{q_c,\infty}(\R^n)}, \quad 
	\| f_2\|_{M^{1,\lambda_c}(\R^n)} 
	\leq C_2 \|f_2\|_X
\]
for $f_1\in L$ and $f_2\in X$. 
Moreover, there exist nonnegative functions 
$g_1\in X$ 
and $g_2\in M$ such that 
$g_1\not \in L$ and $g_2 \not \in X$. 
In particular, $L\subsetneq X \subsetneq M$, that is, 
\[
	L^{q_c,\infty}(\R^n) 
	\cap 
	L^\infty(\R^n) 
	\subsetneq X 
	\subsetneq M^{1,\lambda_c}(\R^n) 
	\cap L^\infty(\R^n). 
\]
\end{proposition}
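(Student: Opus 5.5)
The plan is to prove the two norm inequalities by duality and scaling, and then to handle the two strictness examples. Throughout I write $p':=p/(p-1)$ and
\[
D(\phi):=\int_0^\infty\int_{\R^n}\Bigl|\int_{\R^n}\Delta_yG(y-z,s)\phi(z)\,dz\Bigr|^{p'}dy\,ds ,
\]
which by \cite[Proposition 2.30, Theorem 2.34]{BCDbook} is equivalent to $\|\phi\|_{\dot B^{2/p}_{p',p'}}^{p'}$.

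\emph{The Morrey bound.} I would copy the scaling computation in the proof of Lemma \ref{lem:trivial}. Fix $\phi\in C_0^\infty(\R^n)$ with $0\le\phi\le1$, $\phi=1$ on $B_1(0)$, and set $\phi_{x,r}(z):=\phi((z-x)/r)$. Translation invariance and the change of variables used there give $D(\phi_{x,r})=C_\phi r^{\,n-\frac{2}{p-1}}$. Then
\[
\int_{B_r(x)}|f|\le\int|f||\phi_{x,r}|^{p'}\le\|f\|_X D(\phi_{x,r})=C_\phi r^{\,n-\lambda_c}\|f\|_X .
\]
Multiplying by $r^{\lambda_c-n}$ and taking the supremum over $x$ and $r$ gives $C_2=C_\phi$.

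\emph{The Lorentz bound.} I would use Hölder's inequality in Lorentz spaces:
\[
\int|f||\phi|^{p'}\le C\|f\|_{L^{q_c,\infty}}\,\bigl\||\phi|^{p'}\bigr\|_{L^{q_c',1}} = C\|f\|_{L^{q_c,\infty}}\|\phi\|^{p'}_{L^{r,p'}},\qquad r:=p'q_c' .
\]
A direct check gives
\[
\frac1r=\frac{p-1}{p}\cdot\frac{n(p-1)-2}{n(p-1)}=\frac1{p'}-\frac{2/p}{n}.
\]
This is exactly the Sobolev exponent for the embedding $\dot B^{2/p}_{p',p'}\hookrightarrow L^{r,p'}$, whose Lorentz-refined form follows by real interpolation of the Sobolev embeddings. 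Since $p>p_F$ we have $2/p<n/p'$, so the embedding is valid. Combined with $D(\phi)\simeq\|\phi\|^{p'}_{\dot B^{2/p}_{p',p'}}$ this yields $C_1$. If I want to avoid citing this embedding, I would instead write $\phi=-\int_0^\infty e^{s\Delta}\Delta\phi\,ds$, split the time integral at a level depending on $x$, and run a Hedberg-type argument.

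\emph{Strictness, $g_1\in X\setminus L$.} My first choice is a function singular along a subspace: $g_1(x)=\min\{1,|x'|^{-2/(p-1)}\}$ with $x=(x',x'')\in\R^k\times\R^{n-k}$. Its level sets have infinite measure, so $g_1\notin L$. To show $g_1\in X$, I would apply the fractional Hardy inequality in the $x'$ variables for each fixed $x''$,
\[
\int_{\R^k}|x'|^{-sp'}|\psi(x')|^{p'}dx'\le C\|\psi\|^{p'}_{\dot W^{s,p'}(\R^k)},\qquad s=2/p,\quad sp'=\tfrac{2}{p-1}<k .
\]
I would then integrate in $x''$ and use that the Gagliardo seminorm in the $x'$ directions is dominated by the full $\dot B^{2/p}_{p',p'}$ norm. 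This needs $2/(p-1)<k\le n-1$, which covers only $p>1+2/(n-1)$.

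For $p_F<p\le 1+2/(n-1)$ I would fall back to a sum of widely separated translates of a single bump profile $h$ (or of its rescalings $h_\lambda$). For $N$ disjoint copies the $L^{q_c,\infty}$ quasi-norm grows like $N^{1/q_c}$. The $X$-norm should remain bounded, provided I prove an almost-orthogonality estimate $D(\phi)\ge c\sum_k D(\phi\chi_k)-\text{error}$ for cutoffs $\chi_k$ adapted to very separated regions, using the off-diagonal decay of $\Delta G$. I expect this localization of the Besov-type quantity $D$, with constants uniform in the number of pieces, to be the main obstacle.

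\emph{Strictness, $g_2\in M\setminus X$.} I would exploit the fact that the $M^{1,\lambda_c}$ norm allows concentration near sets of dimension $d=n-\lambda_c$, whereas the quantity $D$ controls a capacity. Take a self-similar Cantor-type set $S$ of dimension $n-\lambda_c$ and its $\delta$-neighbourhoods $S_\delta$, and set $h_\delta:=\delta^{-\lambda_c}\mathbf 1_{S_\delta}$. These have uniformly bounded Morrey norm. Testing with $\phi$ equal to a smoothed $\mathbf 1_{S_\delta}$ should make $\int h_\delta|\phi|^{p'}/D(\phi)$ tend to infinity as $\delta\to0$, reflecting that the Morrey and Besov capacities of $S_\delta$ scale differently. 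Finally I would let $g_2:=\sum_j h_{\delta_j}(\cdot-a_j)$ with $\delta_j\to0$ and $|a_j|\to\infty$ rapidly. This function is bounded on each piece and lies in $M$, but has $\|g_2\|_X=\infty$, so $g_2\notin X$.
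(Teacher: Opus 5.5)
Your two norm inequalities are correct and are essentially the paper's Lemmas \ref{lem:MX} and \ref{lem:XL}. The gaps are in both strictness examples.

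\textbf{The example $g_1$.} The cylinder $\min\{1,|x'|^{-2/(p-1)}\}$, handled by the fractional Hardy inequality in $\R^k$ and the Fubini property of $\dot B^{2/p}_{p',p'}$, is a sound argument (with higher-order differences when $2/p\ge1$). However, it only works for $p>(n+1)/(n-1)$. For $p_F<p\le(n+1)/(n-1)$, and for every $p$ when $n=1$, you are left with separated bumps and an almost-orthogonality estimate for $D$ that you do not prove. So this case remains open in your proposal.

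The paper needs no localization of $D$. It writes $\phi=\cL[h_\phi]$ with $h_\phi(\cdot,t)=-2e^{t\Delta}\Delta\phi$ (identity \eqref{eq:back}). Minkowski's inequality in $t$, Jensen's inequality against the heat kernel and H\"older's inequality in $t$ then give, for every bounded $g\ge0$,
\[
\int_{\R^n}g|\phi|^{p'}dz\le C\Bigl(\int_0^\infty\|e^{t\Delta}g\|_{L^\infty}^{p-1}dt\Bigr)^{\frac{1}{p-1}}D(\phi).
\]
For $g=\sum_{j\ge1}\mathbf 1_{B_1(2^{j+3}e)}$ ($e$ a unit vector) one has $\|e^{t\Delta}g\|_{L^\infty}\lesssim\min\{1,t^{-n/2}\log(t+2)\}$, whose $(p-1)$-st power is integrable since $n(p-1)/2>1$. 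This covers all $n\ge1$ and all $p>p_F$. Incidentally, this criterion misses your cylinder, where $\|e^{t\Delta}g_1\|_{L^\infty}\approx t^{-1/(p-1)}$ for large $t$. So your Hardy argument is a genuinely different mechanism, just not a complete one.

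\textbf{The example $g_2$.} There are two problems. First, $\|h_\delta\|_{L^\infty}=\delta^{-\lambda_c}$, so $\sum_j h_{\delta_j}(\cdot-a_j)$ is unbounded. It is therefore not in $M$, which is by definition a subspace of $L^\infty(\R^n)$; being ``bounded on each piece'' does not suffice. This is repairable by the common scale invariance: use $\mathbf 1_{\delta^{-1}S_\delta}$, i.e.\ unit neighbourhoods of a blown-up $d$-regular set of diameter $\sim\delta^{-1}$, where $d:=n-\lambda_c$. That is what the paper's set $\Xi$ provides.

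Second, and more seriously, the smoothed indicator is the wrong test function. Take $\mathbf 1_{S_\delta}\le\phi\le\mathbf 1_{S_{2\delta}}$ with $|\nabla^2\phi|\lesssim\delta^{-2}$. The numerator is $\delta^{-\lambda_c}|S_\delta|\approx1$, but also $D(\phi)\lesssim1$:
\begin{itemize}
\item the range $s<\delta^2$ contributes at most $\delta^2\|\Delta\phi\|_{L^{p'}}^{p'}\lesssim\delta^{2-2p'+\lambda_c}=1$;
\item for $s\ge\delta^2$, the bounds $\|e^{s\Delta}\phi\|_{L^\infty}\lesssim(\delta/\sqrt s)^{\lambda_c}$ and $\|\Delta e^{s\Delta}\phi\|_{L^1}\lesssim s^{-1}\delta^{\lambda_c}$ give an integrand $\lesssim\delta^{\lambda_cp'}s^{-p'-(p-1)^{-2}}$, whose integral over $(\delta^2,\infty)$ is $O(1)$.
\end{itemize}
So the ratio stays bounded. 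Since $\lambda_c=\frac2p\cdot p'$ is exactly the critical codimension, the Morrey mass and the $\dot B^{2/p}_{p',p'}$-capacity of $S_\delta$ differ only by a logarithm. This shows up only with a multi-scale test function that is $\approx\log(1/\delta)$ on $S_\delta$, while $D\approx\log(1/\delta)$ by near-orthogonality of the dyadic scales.

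That is exactly the paper's choice $\psi_R=\int_{16}^{R^2}t^{p'-2}e^{2t\Delta}(g\mathbf 1_{B_{2R}(0)})\,dt$, with $R\sim\delta^{-1}$ in your scaling. For it the numerator is $\gtrsim R^d(\log R)^{p'}$ and the denominator is $\lesssim R^d\log R$, so the ratio is $\gtrsim(\log R)^{p'-1}\to\infty$. Alternatively, you would need the nontrivial fact that compact $d$-regular sets have zero $\dot B^{2/p}_{p',p'}$-capacity in this critical dimension. You would then test with near-extremal capacitary functions rather than indicators.
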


We prove this proposition by means of several lemmas. 
In the rest of this section, $C>0$ depends only on $n$ and $p$. 
Moreover, we write $L^{q_c,\infty}:= L^{q_c,\infty}(\R^n)$ 
and $M^{1,\lambda_c}:= M^{1,\lambda_c}(\R^n)$.

\begin{lemma}\label{lem:XL}
$\|f\|_X \leq C\|f\|_{L^{q_c,\infty}}$ for $f\in L$. 
\end{lemma}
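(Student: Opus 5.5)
The plan is to reduce the estimate to a Hölder inequality in Lorentz spaces, followed by a Sobolev-type embedding for the quantity in the denominator of \eqref{eq:Xdef}. Write $p':=p/(p-1)$, and denote the denominator by
\[
D(\phi):=\int_0^\infty\!\!\int_{\R^n}\Big|\int_{\R^n}\Delta_yG(y-z,s)\phi(z)\,dz\Big|^{p'}dy\,ds .
\]
Fix $\phi\in C_0^\infty(\R^n)$. By the Hölder inequality in Lorentz spaces, with the duality $L^{q_c,\infty}$ against $L^{q_c',1}$,
\[
\int_{\R^n}|f||\phi|^{p'}dz\le C\|f\|_{L^{q_c,\infty}}\big\||\phi|^{p'}\big\|_{L^{q_c',1}}
= C\|f\|_{L^{q_c,\infty}}\|\phi\|_{L^{r,p'}}^{p'},\qquad r:=p'q_c'.
\]
Here I use the elementary identity $\||\phi|^{a}\|_{L^{s,t}}=\|\phi\|_{L^{as,at}}^{a}$. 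A direct computation gives
\[
\frac1r=\frac{p-1}{p}\Big(1-\frac{2}{n(p-1)}\Big)=\frac1{p'}-\frac{2/p}{n},
\]
and $r<\infty$ precisely because $p>p_F$. This is exactly the Sobolev exponent for $2/p$ derivatives in $L^{p'}$. It therefore remains to prove the embedding
\[
\|\phi\|_{L^{r,p'}}\le C\,D(\phi)^{1/p'}.
\]
In view of the characterization of $D(\phi)$ as $\|\phi\|_{\dot B^{2/p}_{p',p'}}^{p'}$ (\cite[Proposition 2.30, Theorem 2.34]{BCDbook}), this is the classical embedding $\dot B^{s}_{q,q}\hookrightarrow L^{r,q}$ with $1/r=1/q-s/n$.

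To keep the argument self-contained, I would prove the embedding by duality and the semigroup representation. Since $\phi\in C_0^\infty$, we have $\phi=-\int_0^\infty e^{s\Delta}\Delta\phi\,ds$. Hence for a test function $g$,
\[
\int_{\R^n} g\phi\,dz=-\int_0^\infty\!\!\int_{\R^n}(e^{s\Delta}g)(e^{s\Delta}\Delta\phi)\,dy\,ds
\le\Big(\int_0^\infty\|e^{s\Delta}g\|_{L^p}^p\,ds\Big)^{1/p}D(\phi)^{1/p'} .
\]
This uses the semigroup property $e^{s\Delta}=e^{s\Delta/2}e^{s\Delta/2}$ after rescaling time, which only costs a constant. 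The key step is then the bound
\[
\int_0^\infty\|e^{s\Delta}g\|_{L^p}^p\,ds\le C\|g\|_{L^{r',p}}^p .
\]
Since $(L^{r,p'})'=L^{r',p}$, this yields the embedding.

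I expect this last bound to be the main obstacle. I would obtain it by real interpolation. For each $a\in(1,p)$, the smoothing estimate $\|e^{s\Delta}g\|_{L^p}\le Cs^{-\frac n2(\frac1a-\frac1p)}\|g\|_{L^a}$ shows that $g\mapsto \|e^{s\Delta}g\|_{L^p}$ maps $L^a$ into weak-$L^{\theta(a)}(0,\infty;ds)$, where $\frac1{\theta(a)}=\frac n2(\frac1a-\frac1p)$. Choose two such exponents $a_0<r'<a_1$ straddling $r'$; note that $\theta(r')=p$ by the definition of $r$. The off-diagonal Marcinkiewicz interpolation theorem, in its Lorentz-space form, then gives boundedness $L^{r',p}\to L^{p}(0,\infty)$. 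One still has to check that $1<r'<p$ so that the endpoints can be chosen; this holds for $p>p_F$. Combining the three displays yields $\|f\|_X\le C\|f\|_{L^{q_c,\infty}}$, with $C$ depending only on $n$ and $p$.
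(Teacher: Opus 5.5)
Your proposal is correct. The first step is the same as the paper's, and the embedding step follows a genuinely different, self-contained route.

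\textbf{First step.} Your Lorentz H\"older inequality ($L^{q_c,\infty}$ against $L^{q_c',1}$) is exactly the paper's rearrangement computation $\int f_*\phi_*^{p/(p-1)}\,d\nu\le\|f\|_{L^{q_c,\infty}}\int\nu^{-1/q_c}\phi_*^{p/(p-1)}\,d\nu$. Both arguments arrive at $\|\phi\|_{L^{r,p'}}^{p'}$ with $r=np/(n(p-1)-2)$ and $p'=p/(p-1)$.

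\textbf{Embedding step.} The difference lies in how you obtain $\|\phi\|_{L^{r,p'}}\le C\,D(\phi)^{1/p'}$.

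The paper quotes Peetre's Sobolev embedding $\dot{B}^{2/p}_{p',p'}\hookrightarrow L^{r,p'}$. It then uses the lifting property and the heat-semigroup characterization of homogeneous Besov spaces to pass to $D(\phi)$.

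You prove the embedding directly.
\begin{itemize}
\item The representation $\phi=-\int_0^\infty e^{s\Delta}\Delta\phi\,ds$ is the same identity as \eqref{eq:back}.
\item Combined with self-adjointness and the semigroup property, it dualizes the problem to $\int_0^\infty\|e^{s\Delta}g\|_{L^p}^p\,ds\le C\|g\|_{L^{r',p}}^p$. This is the dual embedding $L^{r',p}\hookrightarrow\dot{B}^{-2/p}_{p,p}$.
\item That bound follows from the off-diagonal Marcinkiewicz theorem applied to the sublinear map $g\mapsto\|e^{s\Delta}g\|_{L^p}$. This map is of weak type $(a,\theta(a))$ by the $L^a$--$L^p$ smoothing estimate.
\end{itemize}
Your exponent bookkeeping is right: $r'=np/(n+2)$ lies in $(1,p)$ exactly when $p>p_F$, $\theta(r')=p$, and the second Lorentz index $p$ is preserved.

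\textbf{Comparison.} Your route works with the denominator $D(\phi)$ itself. You never have to identify it with a homogeneous Besov norm or invoke the lifting property. The cost is reliance on two facts you should state precisely:
\begin{itemize}
\item Lorentz-space duality: $(L^{r,p'})'=L^{r',p}$ with equivalent norms, with test functions dense in $L^{r',p}$ since $p<\infty$.
\item The Lorentz form of off-diagonal Marcinkiewicz interpolation: weak types $(a_j,\theta_j)$ with $a_0\neq a_1$ and $\theta_0\neq\theta_1$ yield $L^{a,q}\to L^{\theta,q}$ for every $0<q\le\infty$. Your argument needs it with $q=p$.
\end{itemize}
The paper's version is shorter on the page but imports more machinery.
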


\begin{proof}
Let $f\in L$ and let  $\phi\in C_0^\infty(\R^n)$. 
We denote by $f_*$ and $\phi_*$ 
the nonincreasing rearrangements of $|f|$ and $|\phi|$, respectively. 
Then, the rearrangement inequality and 
the characterization of the Lorentz space yield  
\[
\begin{aligned}
	&\int_{\R^n} |f(z)| |\phi(z)|^\frac{p}{p-1} dz
	\leq 
	\int_0^\infty f_*(\nu) \phi_*^\frac{p}{p-1}(\nu) d\nu 
	\leq 
	\|f\|_{L^{q_c,\infty}} 
	\int_0^\infty \nu^{-\frac{2}{n(p-1)}}
	\phi_*^\frac{p}{p-1} (\nu) d\nu \\
	&= 
	\|f\|_{L^{q_c,\infty}}
	\int_0^\infty \left[ \nu^\frac{n(p-1)-2}{np}
	\phi_*(\nu) \right]^\frac{p}{p-1}  \frac{d\nu}{\nu }
	= \|f\|_{L^{q_c,\infty}}
	\| \phi\|_{L^{\frac{np}{n(p-1)-2}, \frac{p}{p-1}}}^\frac{p}{p-1}. 
\end{aligned}
\]
Applying the Sobolev embedding into the Lorentz space 
(see \cite[TH\'EOR\`EME 8.1]{Pe66} for instance) and 
using the lifting property and the heat semigroup characterization
of homogeneous Besov spaces
(see \cite[Proposition 2.30, Theorem 2.34]{BCDbook}), 
we see that 
\[
\begin{aligned}
	\| \phi\|_{L^{\frac{np}{n(p-1)-2}, \frac{p}{p-1}}}^\frac{p}{p-1}
	&\leq  
	C \| \phi\|_{\dot{B}^{2/p}_{p/(p-1),p/(p-1)} }^\frac{p}{p-1} 
	\leq 
	C \| \Delta \phi\|_{\dot{B}^{-2(p-1)/p}_{p/(p-1),p/(p-1)} }^\frac{p}{p-1} \\
	&\leq 
	C \int_0^\infty \int_{\R^n} 
	\left| \int_{\R^n} \Delta_y G(y-z,s) \phi(z) dz \right|^\frac{p}{p-1} 
	dy ds, 
\end{aligned}
\]
where $C>0$ depends only on $n$ and $p$. 
Then the lemma follows. 
\end{proof}

\begin{lemma}\label{lem:MX}
$\| f\|_{M^{1,\lambda_c}} \leq C \|f\|_X$ for $f\in X$. 
\end{lemma}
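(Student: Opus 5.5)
The plan is to test the definition \eqref{eq:Xdef} against a single fixed bump function, translated and dilated to the ball in question, and to use the scaling computation already carried out in the proof of Lemma \ref{lem:trivial}.

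Fix $\phi\in C_0^\infty(\R^n)$ with $0\leq\phi\leq1$ and $\phi=1$ on $B_1(0)$. Given $x\in\R^n$ and $r>0$, set $\phi_{x,r}(z):=\phi((z-x)/r)$. Then $\phi_{x,r}\not\equiv0$. By \eqref{eq:Xdef},
\[
	\int_{B_r(x)}|f(z)|\,dz
	\leq \int_{\R^n}|f(z)||\phi_{x,r}(z)|^{\frac{p}{p-1}}dz
	\leq \|f\|_X\, D(\phi_{x,r}),
\]
where $D(\cdot)$ denotes the denominator in \eqref{eq:Xdef}.

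Next I compute $D(\phi_{x,r})$. The heat kernel is translation invariant, so the shift by $x$ disappears after the substitutions $y\mapsto y+x$ and $z\mapsto z+x$. For the dilation I use the change of variables $y=r\eta$, $z=r\zeta$, $s=r^2\tau$, exactly as in the proof of Lemma \ref{lem:trivial}. The inner integral $\int\Delta_yG(y-z,s)\phi(z/r)\,dz$ becomes $r^{-2}\int\Delta_\eta G(\eta-\zeta,\tau)\phi(\zeta)\,d\zeta$, and $dy\,ds=r^{n+2}d\eta\,d\tau$. Hence
\[
	D(\phi_{x,r})=r^{\,n+2-\frac{2p}{p-1}}D(\phi)=r^{\,n-\frac{2}{p-1}}D(\phi)=r^{\,n-\lambda_c}D(\phi).
\]
By the first lemma of this section, $D(\phi)$ is finite; more precisely, \eqref{eq:forcutoff} bounds it by a constant depending only on $n$ and $p$, since $\phi$ is fixed once and for all.

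Combining the two displays gives
\[
	r^{\lambda_c-n}\int_{B_r(x)}|f(z)|\,dz\leq D(\phi)\,\|f\|_X.
\]
Taking the supremum over $x\in\R^n$ and $r>0$ yields the lemma with $C=D(\phi)$.

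There is no real obstacle here. The only points to check are the exponent bookkeeping in the scaling, namely that $-\frac{2p}{p-1}+n+2=n-\frac{2}{p-1}$, and the translation invariance. The hypothesis $p>p_F$ is not needed for this inequality itself.
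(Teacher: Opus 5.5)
Your proposal is correct and follows essentially the same route as the paper: it tests \eqref{eq:Xdef} with the translated and dilated cutoff $\phi((\cdot-x)/r)$, rescales the denominator to obtain the factor $r^{n-\lambda_c}$, and bounds the remaining fixed quantity via \eqref{eq:forcutoff}. Your exponent bookkeeping $n+2-\frac{2p}{p-1}=n-\frac{2}{p-1}$ matches the paper's computation.
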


\begin{proof}
Let $f\in X$ and let $\phi\in C_0^\infty(\R^n)$ 
satisfy $0\leq \phi\leq 1$, 
$\phi=1$ on $B_1(0)$ 
and $\phi=0$ on $\R^n\setminus B_2(0)$. 
For $x\in \R^n$ and $R>0$, set 
$\tilde \phi(y):= \phi( (y-x)/R )$. 
By \eqref{eq:Xdef} and the change of variables, 
\[
\begin{aligned}
	&\int_{B_R(x)} |f(y)| dy 
	\leq \int_{\R^n} |f(y)| 
	|\tilde \phi(y)|^\frac{p}{p-1} dy  \\
	&\leq \|f\|_X 
	\int_0^\infty \int_{\R^n} 
	\left| \int_{\R^n} \Delta_y G(y-z,s) \tilde \phi(z) dz \right|^\frac{p}{p-1} 
	dy ds \\
	&= 
	R^{n-\frac{2}{p-1}} \|f\|_X 
	\int_0^\infty \int_{\R^n} 
	\left| \int_{\R^n} \Delta_\eta G(\eta-\zeta, \tau) 
	\phi(\zeta) d\zeta \right|^\frac{p}{p-1} 
	d\eta d\tau 
\end{aligned}
\]
for $x\in \R^n$ and $R>0$. 
By \eqref{eq:forcutoff}, 
the last integral can be bounded by a constant $C_\phi>0$ 
depending only on $n$, $p$ and $\phi$. Consequently, 
\[
\begin{aligned}
	\int_{B_R(x)} |f(y)| dy 
	\leq 
	C_\phi R^{n-\lambda_c} \|f\|_X 
\end{aligned}
\]
for $x\in \R^n$ and $R>0$. 
Taking the supremum gives the desired estimate. 
\end{proof}

\begin{lemma}\label{lem:XLcounter}
There exists a nonnegative function  $g\in X$ such that 
$g\not \in L$. 
\end{lemma}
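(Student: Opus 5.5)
We need $g\ge0$, $g\in L^\infty$, with $\|g\|_X<\infty$ but $\|g\|_{L^{q_c,\infty}}=\infty$. The $X$-norm controls a weighted $L^1$ pairing against $|\phi|^{p/(p-1)}$ by a Besov-type energy of $\phi$. Lower-dimensional concentration should be invisible to the Morrey norm but fatal to the weak Lebesgue norm. So I take $g$ concentrated near a hyperplane, with the critical decay in one variable only:
\[
	g(x)=(1+|x_1|)^{-\frac{2}{p-1}}, \qquad x=(x_1,x')\in\R\times\R^{n-1}.
\]
(For $n=1$ this is not available and another construction is needed; see below.) The set $\{g>\mu\}$ is an infinite slab for every $\mu<1$, so it has infinite measure and $g\notin L$. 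It remains to prove $\|g\|_X<\infty$. For this I will use a one-dimensional analogue of Lemma~\ref{lem:XL}, applied slice by slice, or use the full Lemma~\ref{lem:XL} after a reduction.

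The estimate I aim for is, for $\phi\in C_0^\infty(\R^n)$,
\[
	\int_{\R^n} |x_1|^{-\frac{2}{p-1}} |\phi|^{\frac{p}{p-1}}\,dx
	\le C\int_0^\infty\!\!\int_{\R^n}\Bigl|\int_{\R^n}\Delta_yG(y-z,s)\phi(z)\,dz\Bigr|^{\frac{p}{p-1}}dy\,ds
	\simeq C\|\phi\|_{\dot B^{2/p}_{p',p'}}^{p'},
\]
where $p'=p/(p-1)$. The middle equivalence comes from the heat-semigroup characterization already used in Lemma~\ref{lem:XL}. The left side is a Hardy-type inequality with weight $|x_1|^{-sp'}$ and $s=2/p$, since $sp'=2/(p-1)$.

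Hardy inequalities with respect to one coordinate, $\int |x_1|^{-sq}|\phi|^q\le C\|\phi\|^q_{\dot W^{s,q}}$ (Gagliardo seminorm $=\dot B^s_{q,q}$), hold when $sq<1$ for all $\phi$. For $sq>1$, $sq\ne1$, they hold on functions vanishing at $x_1=0$. Here $sq=2/(p-1)$, and this is $<1$ only when $p>3$. This is a problem when $p\le3$, and $p_F\le3$ always. So the hyperplane choice must be adjusted: concentrate instead along a $k$-dimensional subspace. Take
\[
	g(x)=(1+|x''|)^{-\frac{2}{p-1}}, \qquad x''\in\R^{m},\quad x=(x'',x''')\in\R^m\times\R^{n-m},
\]
with $m\le n-1$ chosen so that $2/(p-1)<m$, i.e.\ $p>1+2/m$. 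Such an $m$ exists exactly when $p>1+2/(n-1)$. Then $g\notin L$ because its superlevel sets are infinite cylinders. Fixing $x'''$, the slice inequality is Lemma~\ref{lem:XL} in $\R^m$ with the Lorentz function $|x''|^{-2/(p-1)}\in L^{m(p-1)/2,\infty}(\R^m)$. To sum the slices I will use Fubini together with the fact that $\dot B^s_{q,q}(\R^n)$ with $q=p'$ controls the $L^q$-integral over $x'''$ of the $\dot B^s_{q,q}(\R^m)$ seminorms of the slices. That is a standard difference-characterization (Gagliardo) splitting, valid because $0<s=2/p<1$. The obstacle is the range $p_F<p\le 1+2/(n-1)$, including $n=1$. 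There I would instead use a lacunary sum of bumps, $g=\sum_k \chi_{B_{r_k}(x_k)}\,r_k^{-2/(p-1)}$ with widely separated centers and scales. Each bump has bounded $X$-norm, by scaling and Lemma~\ref{lem:XL}. Separation should make the $X$-norm of the sum comparable to the supremum, by splitting $\phi$ with a partition of unity subordinate to the scales and controlling the cross terms via the heat-kernel decay in \eqref{eq:forcutoff}. Meanwhile the weak-$L^{q_c}$ norm grows, provided infinitely many bumps sit at the same height $\mu$ with total measure $\to\infty$; for this, let the plateau heights repeat on cubes of increasing count. Carrying out this almost-orthogonality of the Besov energy is the main technical step.
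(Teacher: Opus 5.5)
There is a genuine gap. Your cylinder example is fine where it applies. For $n\ge2$ and $p>1+2/(n-1)$, $g(x)=(1+|x''|)^{-2/(p-1)}$ has superlevel sets of infinite measure. Combining three ingredients then gives $\|g\|_X<\infty$:
\begin{itemize}
\item the one-sided bound $\|\phi\|_{\dot B^{2/p}_{p',p'}}^{p'}\lesssim$ (denominator in \eqref{eq:Xdef}) from Lemma \ref{lem:XL};
\item the slicing inequality for Besov seminorms;
\item the Lorentz--Sobolev/Hardy step of Lemma \ref{lem:XL} on $\R^m$, which needs exactly $m(p-1)>2$.
\end{itemize}
One minor slip: $s=2/p<1$ only when $p>2$, and for $n\ge4$ your range contains $p\le2$. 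There the slicing must be done with second-order differences, which is still standard.

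The real problem is the remaining range $p_F<p\le 1+2/(n-1)$. It is nonempty in every dimension and is all of $p>p_F$ when $n=1$. For it the proposal contains no proof: the ``almost-orthogonality of the Besov energy'' you defer is the entire content of the lemma there.

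Moreover, the heuristic you intend to use is false as stated, namely that separation makes the $X$-norm of a sum of bumps comparable to the supremum of the individual $X$-norms.
\begin{itemize}
\item The function in Lemma \ref{lem:MXcounter} is a sum of indicators of unit balls whose centers are at mutual distance at least $4$. Each has $X$-norm bounded by translation invariance and Lemma \ref{lem:XL}, yet the sum is not in $X$.
\item Likewise, unit bumps on a lattice of fixed spacing violate the Morrey bound of Lemma \ref{lem:MX} at large radii.
\end{itemize}
So any localization argument must use sparsity strictly below the critical count $\#\{\text{centers in } B_R(x)\}\sim R^{n-2/(p-1)}$, uniformly at all scales. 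It must also control the commutator terms coming from the partition of unity, for instance by a Hardy-type bound with weight $|x|^{-2/(p-1)}$; for $p\le2$ this additionally requires higher-order differences. None of this is set up.

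The paper avoids localization entirely. It writes $\phi=\cL[h_\phi]$ via \eqref{eq:back}, then applies Minkowski's inequality in $t$, H\"older (Jensen) for the Gaussian average, and H\"older in time. This gives, for every nonnegative $g\in L^\infty(\R^n)$,
\[
\|g\|_X\le C\Bigl(\int_0^\infty \|e^{t\Delta}g\|_{L^\infty(\R^n)}^{p-1}\,dt\Bigr)^{\frac{1}{p-1}},
\]
where $e^{t\Delta}g$ is the function $U_g(\cdot,t)$ of the paper. For the lacunary choice $g=\sum_{j\ge1}\chi_{B_1(2^{j+3}e)}$, the count $\int_{B_R(x)}g\le C\log(R+2)$ yields $\|e^{t\Delta}g\|_{L^\infty}\le\min\{1,Ct^{-n/2}\log(t+2)\}$. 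Its $(p-1)$-th power is integrable because $n(p-1)/2>1$, so all $n\ge1$ and $p>p_F$ are handled at once.

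This criterion does not detect your cylinder example, where $\|e^{t\Delta}g\|_{L^\infty}\sim t^{-1/(p-1)}$ for large $t$. So your slicing/Hardy mechanism is genuinely different and valid in its range, but it cannot replace the missing argument below $1+2/(n-1)$.
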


\begin{proof}
Let $e:=(1,0,\cdots,0)$ be a unit vector. 
Define a bounded function $g$ by 
\[
	g(x):= \sum_{j=1}^\infty \chi_{B_1(2^{j+3} e)}(x), 
\]
where this kind of function has been used in 
\cite[Example 3.6]{AS17} for studying vanishing properties 
of Morrey spaces. 
Then, $\|g\|_L=\infty$ by $|\{ x;\; g(x)\geq 1/2\}|=\infty$. 
We check that $\|g\|_X<\infty$. 
Let $\phi\in C_0^\infty(\R^n)$. 
For  estimating  
$\int_{\R^n}|g(z)| |\phi(z)|^{p/(p-1)} dz$ 
by using the denominator in \eqref{eq:Xdef}, we have to make 
a quantity like $\Delta_y G(y-z,s) \phi(z)$. 
To do so, we define 
\begin{equation}\label{eq:hphidef}
\begin{aligned}
	&h_\phi(x,t):= -2 \int_{\R^n} \Delta_x G(x-y,t) \phi (y) dy, \\
	&\cL[h_\phi](x):= \int_0^\infty \int_{\R^n} G(x-y,t) h_\phi(y,t) dy dt. 
\end{aligned}
\end{equation}

We claim that 
\begin{equation}\label{eq:back}
	\phi(x) = \cL[h_\phi](x), \quad x\in \R^n. 
\end{equation}
For $m\geq 2$, by Fubini's theorem 
with the semigroup property, 
\[
\begin{aligned}
	&\int_{1/m}^m \int_{\R^n} G(x-y,t) h_\phi(y,t) dy dt \\ 
	&= -2 \int_{1/m}^m \int_{\R^n} G(x-y,t) 
	\int_{\R^n} \Delta_y G(y-z,t) \phi (z) dz dy dt \\
	&= -2 \Delta_x \int_{1/m}^m \int_{\R^n} G(x-y,t) 
	\int_{\R^n}  G(y-z,t) \phi (z) dz dy dt \\
	&= -2 \Delta_x \int_{1/m}^m \int_{\R^n}  
	G(x-z,2t) \phi (z) dz  dt 
	= -2 \int_{1/m}^m \int_{\R^n}  
	\Delta_x  G(x-z,2t) \phi (z) dz  dt, 
\end{aligned}
\]
and so by $G_t=\Delta G$, 
\[
\begin{aligned}
	&\int_{1/m}^m \int_{\R^n} G(x-y,t) h_\phi(y,t) dy dt 
	= -\int_{1/m}^m \int_{\R^n}  
	\partial_t \left(  G(x-z,2t) \right)\phi (z) dz  dt \\
	&= 
	\int_{\R^n}  G(x-z,2/m) \phi (z) dz 
	- \int_{\R^n}  G(x-z,2m) \phi (z) dz 
	\to \phi (x) 
	\quad \mbox{ as }m\to \infty. 
\end{aligned}
\]
Then the claim \eqref{eq:back} holds.

By \eqref{eq:back}, 
the definition of $\cL[h_\phi]$ in \eqref{eq:hphidef} 
and Minkowski's inequality, 
\[
\begin{aligned}
	&\int_{\R^n}|g(z)| |\phi(z)|^\frac{p}{p-1} dz 
	= \int_{\R^n}|g(x)| |\cL[h_\phi](x)|^\frac{p}{p-1} dx   \\
	&\leq 
	C \int_{\R^n}|g(x)| \left| 
	\int_0^\infty \int_{\R^n} 
	G(x-y,t) h_\phi(y,t) dydt  
	\right|^\frac{p}{p-1} dx  \\
	&= 
	C \left( \int_{\R^n} \left| 
	\int_0^\infty  
	\left[ |g(x)|^\frac{p-1}{p} \int_{\R^n} 
	G(x-y,t) h_\phi(y,t) dy \right] dt  
	\right|^\frac{p}{p-1} dx 
	\right)^{\frac{p-1}{p}\cdot \frac{p}{p-1}} \\
	&\leq 
	C \left( \int_0^\infty  \left( 
	\int_{\R^n} 
	|g(x)|\left| \int_{\R^n} 
	G(x-y,t) h_\phi(y,t) dy \right|^\frac{p}{p-1} dx  
	\right)^\frac{p-1}{p} dt 
	\right)^\frac{p}{p-1}. 
\end{aligned}
\]
Then, from the definition of $h_\phi$ in \eqref{eq:hphidef}, 
the H\"older inequality for the integral in $y$ 
with $\int_{\R^n} G dy=1$  
and Fubini's theorem, it follows that 
\[
\begin{aligned}
	&\int_{\R^n} 
	|g(x)|\left| \int_{\R^n} 
	G(x-y,t) h_\phi(y,t) dy \right|^\frac{p}{p-1} dx   \\
	&= 
	2^\frac{p}{p-1} \int_{\R^n} 
	|g(x)|\left| \int_{\R^n} 
	G(x-y,t) \int_{\R^n} \Delta_y G(y-z,t) \phi(z) dz dy \right|^\frac{p}{p-1} dx  \\
	&\leq 
	2^\frac{p}{p-1} \int_{\R^n} 
	|g(x)| \int_{\R^n} 
	G(x-y,t) \left| \int_{\R^n} \Delta_y G(y-z,t) \phi(z) dz
	\right|^\frac{p}{p-1} dy  dx \\
	&=
	2^\frac{p}{p-1} \int_{\R^n} 
	\left| \int_{\R^n} \Delta_y G(y-z,t) \phi(z) dz
	\right|^\frac{p}{p-1} U_g(y,t) dy, 
\end{aligned}
\]
where 
\[
	U_g(y,t):= \int_{\R^n}  G(x-y,t) g(x)  dx. 
\]

Hence by combining the above computations, 
and then by taking the supremum in $y$ of $U_g(y,t)$
and using the H\"older inequality, we see that 
\[
\begin{aligned}
	&\int_{\R^n}|g(z)| |\phi(z)|^\frac{p}{p-1} dz \\
	&\leq 
	C \left( \int_0^\infty  \left( 
	\int_{\R^n} 
	\left| \int_{\R^n} \Delta_y G(y-z,t) \phi(z) dz \right|^\frac{p}{p-1} 
	U_g(y,t) dy 
	\right)^\frac{p-1}{p} dt 
	\right)^\frac{p}{p-1} \\
	&\leq 
	C \left( \int_0^\infty  
	\| U_g(\cdot,t)\|_{L^\infty(\R^n)}^\frac{p-1}{p} 
	\left( 
	\int_{\R^n} 
	\left| \int_{\R^n} \Delta_y G(y-z,t) \phi(z) dz \right|^\frac{p}{p-1} 
	dy 
	\right)^\frac{p-1}{p} dt 
	\right)^\frac{p}{p-1} \\
	&\leq 
	C \left( \int_0^\infty  
	\| U_g(\cdot,t)\|_{L^\infty(\R^n)}^{p-1} dt 
	\right)^\frac{1}{p-1}
	\int_0^\infty  \int_{\R^n} 
	\left| \int_{\R^n} \Delta_y G(y-z,t) \phi(z) dz \right|^\frac{p}{p-1} 
	dy dt. 
\end{aligned}
\]

In what follows, we estimate $U_g$. 
Let $x\in \R^n$ and $R>1$. 
We observe that if $B_R(x)$ intersects $m$ of the unit balls 
in $\{B_1(2^{j+3} e)\}_{j=1}^\infty$, 
then the diameter $2R$ of $B_R(x)$ 
is estimated from below by $ c 2^m$ with an absolute constant $c>0$. 
Then there exists an absolute constant $\tilde C>0$ such that 
$m\leq \tilde C \log(R+2)$ for $R>1$, 
and so there exists $C_n>0$ depending only on $n$ such that 
\[
	\int_{B_R(x)} g(y) dy \leq C_n \log(R+2) 
	\quad \mbox{ for }x\in\R^n,\; R>1. 
\]
We apply this estimate to a dyadic decomposition of $U_g$. Then, 
\[
\begin{aligned}
	&U_g(x,t) 
	\leq 
	C_n t^{-\frac{n}{2}} 
	\int_{B_{\sqrt{t}}(x)} e^{-\frac{|x-y|^2}{4t}} g dy 
	+ C_n t^{-\frac{n}{2}} 
	\sum_{k=1}^\infty 
	\int_{2^{k-1}\sqrt{t}\leq |x-y|
	\leq 2^k \sqrt{t}} e^{-\frac{|x-y|^2}{4t}} g dy \\
	&\leq 
	C_n t^{-\frac{n}{2}} \log(\sqrt{t}+2) 
	+ C_n t^{-\frac{n}{2}} \sum_{k=1}^\infty 
	e^{-4^{k-2}} \log(2^k \sqrt{t}+2) 
	\leq 
	C_n t^{-\frac{n}{2}} \log(t+2) 
\end{aligned}
\]
for $x\in \R^n$ and $t>0$, 
where $C_n>0$ depends only on $n$ and 
changes its value from line to line. 
Moreover, we see from $g\leq 1$ that 
\[
	U_g(x,t) \leq \int_{\R^n} G(x-y,t) dy =1 
	\quad 
	\mbox{ for }x\in \R^n, \; t>0. 
\]
Therefore, we can estimate that 
\[
	\int_0^\infty  
	\| U_g(\cdot,t)\|_{L^\infty(\R^n)}^{p-1} dt 
	\leq 
	\int_0^1 1^{p-1} dt 
	+ C \int_1^\infty  
	t^{-\frac{n(p-1)}{2}} \left( \log(t+2)\right)^{p-1} dt. 
\]
Since $-n(p-1)/2<-1$ for $p>p_F$, we obtain 
$\int_0^\infty  \| U_g(\cdot,t)\|_{L^\infty(\R^n)}^{p-1} dt\leq C$, 
and so 
\[
	\int_{\R^n}|g(z)| |\phi(z)|^\frac{p}{p-1} dz 
	\leq 
	C \int_0^\infty  \int_{\R^n} 
	\left| \int_{\R^n} \Delta_y G(y-z,t) \phi(z) dz \right|^\frac{p}{p-1} 
	dy dt, 
\]
where $C>0$ depends only on $n$ and $p$. 
This implies that $\|g\|_X \leq C$. 
We recall from the beginning of this proof that $\|g\|_L=\infty$. 
Then the lemma follows. 
\end{proof}

\begin{lemma}\label{lem:MXcounter}
There exists a nonnegative function  $g\in M$ such that 
$g\not \in X$. 
\end{lemma}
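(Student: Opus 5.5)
The plan is to exploit the fact that the Morrey norm $M^{1,\lambda_c}$ tests $f$ against one scale at a time, while $\|\cdot\|_X$ allows test functions $\phi$ that superpose many scales. The mechanism is the same as in the case $p=p_F$ of Lemma \ref{lem:trivial}. There the cross-scale interaction in the denominator of \eqref{eq:Xdef} is weak: the denominator of a sum of $N$ well-separated dyadic pieces grows only linearly in $N$. The numerator $\int |f||\phi|^{p/(p-1)}$, by contrast, grows like $N^{p/(p-1)}$ at points where all $N$ pieces overlap with comparable size. Radial profiles such as $(1+|x|)^{-\lambda_c}$ cannot produce such overlaps (they lie in $L$, hence in $X$ by Lemma \ref{lem:XL}). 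I would therefore build $g$ on a self-similar Cantor-type set of dimension $d:=n-\lambda_c\in(0,n)$. This dimension is exactly the one for which the Morrey condition $\int_{B_r(x)}g\le Cr^{n-\lambda_c}$ is borderline at every scale simultaneously.

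\textbf{Step 1 (the building block).} Fix a ratio $\theta\in(0,1/4)$ and an integer $K$ with $K\theta^{d}=1$ (adjusting $\theta$ if needed). Let $E_k$ be the union of $K^k$ balls of radius $\theta^k$ obtained by the standard nested construction with well-separated children. Set $g_N:=\theta^{-N\lambda_c}\chi_{E_N}$. Counting the balls of generation $k$ that meet $B_r(x)$ with $\theta^{k}\sim r$ gives $\int_{B_r(x)}g_N\le C r^{d}$ for all $r\ge\theta^N$. For $r<\theta^N$ the bound $g_N\le\theta^{-N\lambda_c}\le r^{-\lambda_c}$ gives the same estimate. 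Hence $\|g_N\|_{M^{1,\lambda_c}}\le C$ uniformly in $N$.

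\textbf{Step 2 (the test function).} For $0\le k\le N$ put
\[
\psi_k(x):=b_k\sum_{Q\in E_k}\phi\bigl((x-x_Q)/\theta^{k}\bigr),
\]
where $\phi$ is the cutoff of Lemma \ref{lem:MX}, and let $\Phi_N:=\sum_{k=0}^N\psi_k$. The weights $b_k$ are chosen by the scaling of \eqref{eq:fscaling} so that each $\psi_k$ alone contributes comparably, i.e.\ $\int g_N\psi_k^{p/(p-1)}\sim1$. On $E_N$ every $\psi_k$ equals $b_k$, and the choice makes $b_k^{p/(p-1)}\cdot|E_N|\theta^{-N\lambda_c}$ independent of $k$. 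Consequently $\int g_N\Phi_N^{p/(p-1)}\ge cN^{p/(p-1)}$. For the denominator I would follow the dyadic-time splitting of Lemma \ref{lem:trivial}. On $s\sim\theta^{2j}$, pieces with $k>j$ (finer scales) are handled by integrating $\Delta$ onto $\phi$ and using the smoothing estimate. Pieces with $k\le j$ (coarser scales) are handled by the $L^1$–$L^{p/(p-1)}$ bound for $\Delta G$. The aim is a bound $\le C$ on each time block, with geometric decay away from $k=j$, so that the total denominator is $\le CN$. The ratio is then $\ge cN^{1/(p-1)}$, which gives $\|g_N\|_X\ge cN^{1/(p-1)}$.

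\textbf{Step 3 (gluing).} Rescale each $g_N$ using \eqref{eq:fscaling} so that it is bounded by $1$; both norms are scale invariant. Then place translates far apart, say $g:=\sum_N g_N(\cdot-R_Ne)$ with $R_N$ growing very rapidly. Because the pieces are far apart, the Morrey norm of $g$ stays bounded: a ball meeting two pieces has radius comparable to their distance, and the Morrey bound there follows from the mass bounds for each $g_N$. Testing $\|g\|_X$ with the translated $\Phi_N$ gives $\|g\|_X\ge\|g_N\|_X\to\infty$, since $g\ge g_N(\cdot-R_Ne)$ and the denominator is translation invariant. Thus $g\in M$ and $g\notin X$.

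\textbf{Main obstacle.} The delicate step is the denominator estimate in Step 2. One has to verify that, at a given time scale, the overlapping contributions of the $K^k$ bumps of each generation add up exactly at the critical dimension $d=n-\lambda_c$, without any extra factor of $N$. This critical counting is what makes each time block $O(1)$ rather than larger. I would first try to prove it using the Besov characterization $\dot B^{2/p}_{p/(p-1),p/(p-1)}$ quoted after \eqref{eq:Xdef}, together with almost-orthogonality of Littlewood–Paley pieces localized at scale $\theta^k$.
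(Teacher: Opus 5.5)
Your overall strategy is sound. It uses a $d$-dimensional Cantor set with $d=n-\lambda_c$, puts one layer of bumps per generation into the test function, and plays a numerator $\simeq N^{p/(p-1)}$ against a denominator $\simeq N$. The gap is in the estimate everything rests on, the denominator bound in Step~2: you leave it unproved, and the recipe you give for it fails because the two tools are assigned the wrong way round. Write $e^{s\Delta}f:=\int G(\cdot-z,s)f(z)\,dz$.

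\emph{Finer pieces} ($k>j$, so $\theta^{k}<\sqrt{s}$). Putting $\Delta$ on $\phi$ gives $\|\Delta\psi_k\|_{L^{p/(p-1)}}(\theta^{2j})^{(p-1)/p}\simeq\theta^{-2(k-j)(p-1)/p}$. This grows away from the diagonal value $1$.

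\emph{Coarser pieces.} The $L^1$--$L^{p/(p-1)}$ bound $\|\Delta G(\cdot,s)\|_{L^{p/(p-1)}}\|\psi_k\|_{L^1}$ treats the $K^k$ disjoint bumps as if they sat at one point. Already for $k=j$ it gives a block contribution of order $K^{j/(p-1)}$ instead of $O(1)$, and it is worse for $k<j$.

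So neither bound gives the geometric decay in $|k-j|$ that you need. The ``critical counting'' you single out as the main obstacle comes from this choice of tools, not from a real difficulty.

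The repair is elementary and needs no Littlewood--Paley almost orthogonality. Your normalization makes $b_k$ independent of $k$, so take $b_k=1$. The generation-$k$ bumps have disjoint supports and $K\theta^n=\theta^{\lambda_c}$, so $\|\psi_k\|_{L^{p/(p-1)}}\simeq\theta^{2k/p}$ and $\|\Delta\psi_k\|_{L^{p/(p-1)}}\simeq\theta^{-2k(p-1)/p}$. On the block $\theta^{2j}\le s<\theta^{2j-2}$:
\begin{itemize}
\item bound the coarse pieces $k<j$ by $\|e^{s\Delta}\Delta\psi_k\|\le\|\Delta\psi_k\|$;
\item bound the fine pieces $k\ge j$ by the $L^{p/(p-1)}\to L^{p/(p-1)}$ smoothing bound $\|\Delta e^{s\Delta}\psi_k\|\le Cs^{-1}\|\psi_k\|$, not the $L^1$ one.
\end{itemize}
Both sums are geometric with their largest terms at $k\approx j$. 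Hence $\|\Delta e^{s\Delta}\Phi_N\|_{L^{p/(p-1)}}\le C\theta^{-2j(p-1)/p}$, and each block contributes $O(1)$. The ranges $s<\theta^{2N}$ and $s\ge1$ are $O(1)$ by the same argument. So the denominator is at most $C(N+2)$ and $\|g_N\|_X\ge cN^{1/(p-1)}$.

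Take $K$ large so that the nested construction with disjoint supports $B_{2\theta^k}(x_Q)$ exists; this is possible since $K\theta^n=\theta^{\lambda_c}\to0$. With that and the fix above, Steps~1 and~3 go through, and you have a valid proof by a route different from the paper's.

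The paper needs no family and no gluing. It takes a single bounded $g$ made of unit balls centred on the large-scale $d$-dimensional set $4\Z^m\times D_\theta\times\{0\}^{n-m-1}$, and tests with the heat-averaged function $\psi_R$, obtaining a ratio $\ge c(\log R)^{1/(p-1)}$. Your discrete multiscale test function makes the denominator computation more transparent, at the price of the family $g_N$ and the gluing step.
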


\begin{proof}
Similarly to Lemma \ref{lem:XLcounter}, 
we consider a sum of characteristic functions 
on countably many unit balls. 
Let $m\geq 0$ be the largest integer satisfying $m<n-(2/(p-1))$ 
and let $\theta:= n-(2/(p-1)) -m$. 
We take the centers of balls $\xi\in\R^n$ of the form 
\[
\begin{aligned}
	&\xi=(x_1,\ldots,x_m, x_{m+1}, 0,\ldots,0)
	&&\in 4\Z^m \times D_\theta \times \{0\}^{n-m-1}
	&&\mbox{ if } 0<m<n-1, \\
	&\xi=(x_{m+1}, 0,\ldots,0)
	&&\in D_\theta \times \{0\}^{n-1}
	&&\mbox{ if } m=0,\\
	&\xi=(x_1,\ldots,x_m, x_{m+1})
	&&\in 4\Z^m \times D_\theta 
	&&\mbox{ if } m=n-1, \\
\end{aligned}
\]
where $4\Z^m$ consists of the lattice points 
of the form $(4y_1,\ldots,4y_m)$ with $y_1,\ldots,y_m \in\Z$ 
and $D_\theta$ is defined by 
\[
	D_\theta:= 
	\left\{ 4\sum_{j=0}^N \eps_j 2^\frac{j}{\theta}; \; 
	N=0, 1, \ldots, \eps_j\in \{0,1\} \right\}. 
\]
For example, if $n=1$ and $p=5$, then  
$n-(2/(p-1))=1/2$, 
$m=0$, $\theta=1/2$ and 
\[
\begin{aligned}
	D_{1/2}&= 4 \{0\cdot 4^0, 1\cdot 4^0, 
	0\cdot 4^0 + 1\cdot 4^1, 
	1\cdot 4^0 + 1\cdot 4^1, 
	0\cdot 4^0 + 0\cdot 4^1 + 1 \cdot 4^2, \\
	&\qquad 
	1\cdot 4^0 + 0\cdot 4^1 + 1 \cdot 4^2, 
	0\cdot 4^0 + 1\cdot 4^1 + 1 \cdot 4^2, 
	1\cdot 4^0 + 1\cdot 4^1 + 1 \cdot 4^2, \ldots \} \\
	&=4\{ 0,1,4,5, 16,17, 20,21, 64, \ldots \}
	= \{ 0,4,16,20, 64,68, 80,84, 256, \ldots \}. 
\end{aligned}
\]
Moreover, if $n=3$ and $p=2$, 
then $n-(2/(p-1))=1$, $m=0$, $\theta=1$ and 
\[
	D_1 
	= 4\{0, 1, 2, 3, 4, 5, 6, 7, \ldots \}
	= \{0, 4, 8, 12, 16, 20, 24, 28, \ldots \}. 
\]

We denote by $\Xi$ the above set of the centers of the form 
$4\Z^m \times D_\theta \times \{0\}^{n-m-1}$
and define 
a bounded function $g$ by 
\begin{equation}\label{eq:Morreyg}
	g(x):= \sum_{\xi\in \Xi} \chi_{B_1(\xi)}(x). 
\end{equation}
Since $0<\theta\leq 1$,  
the distance between any two distinct centers in 
$\Xi$ is at least $4$. 
Thus, we observe that the balls $B_1(\xi)$ are pairwise disjoint 
and that $0\leq g(x)\leq 1$ for $x\in\R^n$. 
For example, in the case $n=3$ and $p=2$, 
we have $\Xi=D_1 \times \{0\}^2$ and 
\[
	g(x)= \sum_{\xi_1 \in D_1} \chi_{B_1( (\xi_1,0,0) )}(x)
	= \sum_{k=0}^\infty  \chi_{B_1(4k e)}(x), 
	\quad e:=(1,0,0). 
\]

By counting the centers of balls in \eqref{eq:Morreyg}, 
we show that for $r\geq1$, 
\begin{align}
	&\#( \Xi \cap B_r(x) ) \leq C r^{n-\frac{2}{p-1}} 
	&&\mbox{ for }x\in \R^n, \label{eq:upper} \\
	&\#( \Xi \cap B_r(\xi) ) \geq C^{-1} r^{n-\frac{2}{p-1}} 
	&&\mbox{ for }\xi\in \Xi, \label{eq:lower} 
\end{align}
where $\# A$ is the number of elements of $A$ 
and $C>1$ depends only on $n$. 
To show these estimates, we first observe $D_\theta$ 
and derive one dimensional estimates 
\begin{align}
	&\#(D_\theta \cap (a-r, a+r)) \leq 12 r^{\theta} 
	&&\mbox{ for }a \in \R,\;r\geq1, \label{eq:upper1} \\
	&\#(D_\theta \cap (a-r, a+r)) \geq \frac{1}{8} r^{\theta} 
	&&\mbox{ for }a \in D_\theta, \;r\geq1. \label{eq:lower1} 
\end{align}
For $1\leq r<8$, since the distance between two distinct points 
in $D_\theta$ is at least $4$, we see that 
$\#(D_\theta \cap (a-r, a+r)) \leq 4 \leq 12 r^\theta$ 
for $a\in \R$. 
As for the lower bound, we have 
$\#(D_\theta \cap (a-r, a+r)) \geq  1 \geq r^\theta/8$ 
for $a\in D_\theta$. 
Therefore, \eqref{eq:upper1} and \eqref{eq:lower1} hold 
for $1\leq r<8$.

Let $r\geq8$. We fix $k\geq0$ 
such that $8\cdot 2^{k/\theta} \leq r < 8\cdot 2^{(k+1)/\theta}$. 
For $\eta \in 2^{(k+1)/\theta} D_\theta$, 
define a block $P_k(\eta)$ by 
\[
	P_k(\eta):= \left\{ 
	\eta + 4\sum_{j=0}^k \eps_j 2^\frac{j}{\theta};\; 
	\eps_0,\ldots,\eps_k\in\{0,1\}
	\right\}. 
\]
We remark that each of the blocks $P_k(\eta)$ is made by 
dividing $D_\theta$ into subsets by fixing 
$\eps_{k+1}$, $\eps_{k+2}$, \ldots 
and varying $\eps_0$, \ldots, $\eps_k$ in $\{ 0,1\}$. 
For  $k$ fixed as above,  we have 
\begin{equation}\label{eq:decomdis}
	D_\theta = \bigcup_{\eta\in 2^\frac{k+1}{\theta} D_\theta} 
	P_k(\eta) \quad (\mbox{disjoint union}). 
\end{equation}
Each of the blocks $P_k(\eta)$ has $2^{k+1}$ points, 
has the minimum point $\eta$ and admits the diameter bound 
\begin{equation}\label{eq:diamupper}
	\diam P_k(\eta) = 4\sum_{j=0}^k 2^\frac{j}{\theta} 
	=4\frac{2^\frac{k+1}{\theta}-1}{2^\frac{1}{\theta}-1} 
	< 8\cdot 2^\frac{k}{\theta} \leq r. 
\end{equation}
Moreover, for $\eta, \eta'\in 2^{(k+1)/\theta} D_\theta$ with 
$\eta\neq \eta'$, we have 
\begin{equation}\label{eq:space}
	|\eta-\eta'| \geq 4\cdot 2^\frac{k+1}{\theta} \geq \frac{r}{2}. 
\end{equation}
Therefore, for $a\in \R$, 
if $P_k(\eta)\cap (a-r, a+r)\neq \emptyset$, 
then $\eta$ belongs to the interval $(a-2r, a+r)$ with length $3r$. 
This together with \eqref{eq:space}, 
$3r/(r/2)=6$, $\# P_k(\eta)=2^{k+1}$ and $2^k \leq 8\cdot 2^{k/\theta} \leq r$ 
implies the following upper bound: 
\[
	\#(D_\theta \cap (a-r, a+r)) \leq 
	6 \cdot 2^{k+1} \leq 12 r^\theta 
	\quad 
	\mbox{ for }a \in \R,\;r\geq8. 
\]
As for the lower bound, we use the same $k$ fixed as above. 
Let $a\in D_\theta$. By \eqref{eq:decomdis}, 
we can take $P_k(\eta_a)$ such that $a\in P_k(\eta_a)$. 
We remark that $\diam P_k(\eta_a)<r$ by \eqref{eq:diamupper}, 
and so $P_k(\eta_a) \subset(a-r,a+r)$. 
Therefore, 
\[
	\#(D_\theta \cap (a-r, a+r)) 
	\geq \#P_k(\eta_a) = 2^{k+1} 
	> 8^{-\theta} r^\theta  \geq \frac{1}{8} r^\theta
	\quad 
	\mbox{ for }a \in D_\theta,\;r\geq8. 
\]
Therefore, \eqref{eq:upper1} and \eqref{eq:lower1} 
hold for $r\geq 8$. 
Since the case $1\leq r< 8$ is already handled, 
\eqref{eq:upper1} and \eqref{eq:lower1} also hold for all $r\geq 1$.

We use $4\Z^m \times D_\theta \times \{0\}^{n-m-1}$ 
to show \eqref{eq:upper} and \eqref{eq:lower}. 
By counting the lattice points and 
by using \eqref{eq:upper1} and $m+\theta=n-(2/(p-1))$, 
for $x\in\R^n$ and $r\geq1$, 
\[
	\#( \Xi \cap B_r(x) ) \leq C r^m \times r^\theta = C r^{n-\frac{2}{p-1}}, 
\]
where $C>0$ depends only on $n$. 
On the other hand, by \eqref{eq:lower1} and a geometric observation, 
for $\xi\in\Xi$ and $r\geq 2\sqrt{n}$, 
\[
	\#( \Xi \cap B_r(\xi) ) 
	\geq \frac{1}{8\cdot 4^m} 
	\left( \frac{r}{2\sqrt{n}} \right)^{m+\theta} 
	\geq \frac{1}{8\cdot 4^n ({2\sqrt{n}})^n} 
	r^{n-\frac{2}{p-1}}. 
\]
For $\xi\in\Xi$ and $1\leq r< 2\sqrt{n}$, 
we have $\#( \Xi \cap B_r(\xi) ) \geq 1 \geq (2\sqrt{n})^{-n} r^{n-(2/(p-1))}$. 
Hence we obtain \eqref{eq:upper} and \eqref{eq:lower}.

We return to the estimates concerning $g$ in \eqref{eq:Morreyg}. 
From \eqref{eq:upper}, it follows that 
\[
	\int_{B_r(x)} g(y) dy
	\leq |B_1(0)| \times \#( \Xi \cap B_{r+1}(x) )
	\leq C r^{n-\frac{2}{p-1}} 
\]
for $r\geq 1$ and $x\in\R^n$. 
As for the case $0<r<1$, by $g\leq 1$, we also have 
\[
	\int_{B_r(x)} g(y) dy \leq C r^n \leq 
	C r^{n-\frac{2}{p-1}}. 
\]
Hence we obtain $g\in M$. 
In the rest of this proof, we show $g\not \in X$. 

For $R>16$, we define 
\[
	\psi_R(x):= \int_{16}^{R^2} 
	t^{\frac{p}{p-1}-2} \int_{B_{2R}(0)} 
	G(x-y,2t) g(y) dy dt. 
\]
We take $\chi\in C_0^\infty(\R^n)$ 
such that $0\leq \chi\leq 1$, 
$\chi(x)=1$ on $B_1(0)$ and 
$\chi(x)=0$ on $\R^n\setminus B_2(0)$.  
For $L>R$, set $\chi_L(x):= \chi(x/L)$ and 
$\phi_{L,R}(x):= \chi_L(x) \psi_R(x)$. 
We note that $\phi_{L,R}\in C_0^\infty(\R^n)$. 
Let us estimate 
\begin{equation}\label{eq:fractional}
	\frac{ \displaystyle 
	\int_{\R^n} |g(y)| |\phi_{L,R}(y)|^\frac{p}{p-1} dy}
	{ \displaystyle 
	\int_0^\infty \int_{\R^n} 
	\left| \int_{\R^n} \Delta_y G(y-z,s) \phi_{L,R}(z) dz 
	\right|^\frac{p}{p-1} dy ds }. 
\end{equation}

By considering the support of $\chi_L$ with $L>R$ 
and using $B_{\sqrt{t}}(y)\subset B_{2R}(0)$ 
for $y\in B_R(0)$ and $t\leq R^2$, we see that 
\[
\begin{aligned}
	&\int_{\R^n} |g(y)| |\phi_{L,R}(y)|^\frac{p}{p-1} dy \\
	&\geq 
	\int_{B_R(0)} g(y) \left| 
	\int_{16}^{R^2} 
	t^{\frac{p}{p-1}-2} \int_{B_{2R}(0)} 
	G(y-z,2t) g(z) dz dt
	\right|^\frac{p}{p-1} dy \\
	&\geq 
	C^{-1} \int_{B_R(0)\cap \Omega} g(y) \left| 
	\int_{16}^{R^2} 
	t^{\frac{p}{p-1}-2-\frac{n}{2}} \int_{B_{\sqrt{t}}(y)} 
	g(z) dz dt
	\right|^\frac{p}{p-1} dy, 
\end{aligned}
\]
where $\Omega:=\{ x\in \R^n; \; g(x)>0\}$ and 
$C>0$ depends only on $n$ and $p$.

For $y\in B_R(0)\cap \Omega$, 
we can choose $\xi_y\in \Xi$ such that 
$y\in B_1(\xi_y)$ by the definition of $g$ in \eqref{eq:Morreyg}. 
Let $t>16$. 
We observe that 
if $\xi\in \Xi \cap B_{\sqrt{t}/2}(\xi_y)$, then 
$B_1(\xi) \subset B_{\sqrt{t}}(y)$. 
Indeed, for $y'\in B_1(\xi)$,  
\[
	|y'-y|\leq |y'-\xi| + |\xi-\xi_y| + |\xi_y - y|
	\leq 
	1+\frac{\sqrt{t}}{2}+1< \sqrt{t}. 
\]
Therefore, by the counting estimate \eqref{eq:lower} 
with $r=\sqrt{t}/2$ and $t\geq 16$, 
\[
	\int_{B_{\sqrt{t}}(y)} g(z) dz
	\geq C^{-1} \left( \frac{\sqrt{t}}{2} \right)^{n-\frac{2}{p-1}}
	\geq C^{-1} t^{\frac{n}{2}-\frac{1}{p-1}}, 
\]
and so by $0\in \Xi\cap \Omega$ and \eqref{eq:lower} with $r=R/2$,
\begin{equation}\label{eq:lowRR}
\begin{aligned}
	&\int_{\R^n} |g(y)| |\phi_{L,R}(y)|^\frac{p}{p-1} dy 
	\geq 
	C^{-1} \int_{B_R(0)\cap \Omega} g(y) \left| 
	\int_{16}^{R^2} t^{-1} dt
	\right|^\frac{p}{p-1} dy  \\
	&\geq 
	C^{-1} (\log R)^\frac{p}{p-1} 
	\int_{B_R(0)\cap \Omega} g(y)  dy 
	\geq 
	C^{-1} R^{n-\frac{2}{p-1}}  (\log R)^\frac{p}{p-1}
\end{aligned}
\end{equation}
for $R>16$ and $L>R$, where 
$C>0$ depends only on $n$ and $p$.

We estimate the denominator in \eqref{eq:fractional} 
with $\phi_{L,R}$ replaced by $\psi_R(y)$. 
By the semigroup property, we note that 
\[
\begin{aligned}
	\psi_R(y)&= \int_{16}^{R^2} 
	t^{\frac{p}{p-1}-2} \int_{B_{2R}(0)} 
	G(y-z,2t) g(z) dz dt \\
	&= \int_{16}^{R^2} 
	t^{\frac{p}{p-1}-2} \int_{B_{2R}(0)} g(z)
	\int_{\R^n} G(y-\eta,t) G(\eta-z,t) d\eta  dz dt. 
\end{aligned}
\]
Therefore, Fubini's theorem gives 
\[
\begin{aligned}
	&\int_0^\infty \int_{\R^n} 
	\left| \int_{\R^n} \Delta_x G(x-y,s) \psi_R(y) dy 
	\right|^\frac{p}{p-1} dx ds \\
	&=
	\int_0^\infty \int_{\R^n} 
	\left| \Delta_x \int_{\R^n}  G(x-y,s) \psi_R(y) dy 
	\right|^\frac{p}{p-1} dx ds \\
	&= 
	\int_0^\infty \int_{\R^n} 
	\left|  \Delta_x \int_{\R^n}  
	\int_{16}^{R^2} 
	t^{\frac{p}{p-1}-2} \int_{B_{2R}(0)} G(\eta-z,t)  g(z) \right. \\
	&\quad \times \left. \int_{\R^n}  G(x-y,s) G(y-\eta,t)  dy 
	 dz dt
	 d\eta 
	\right|^\frac{p}{p-1} dx ds. 
\end{aligned}
\]
Again by the semigroup property, 
the expression inside the absolute value in the right-hand side 
is equal to 
\[
	\int_{16}^{R^2} t^{\frac{p}{p-1}-2} 
	\int_{\R^n}  
	\Delta_x G(x-\eta,s+t) 
	\int_{B_{2R}(0)} G(\eta-z,t)  g(z) 
	dz d\eta dt. 
\]

From the above computations with Minkowski's inequality and 
the smoothing estimate, it follows that 
\begin{equation}\label{eq:pp1delcalc}
\begin{aligned}
	&\int_0^\infty \int_{\R^n} 
	\left| \int_{\R^n} \Delta_x G(x-y,s) \psi_R(y) dy 
	\right|^\frac{p}{p-1} dx ds \\
	&\leq  
	C \int_0^\infty   
	\left( \int_{16}^{R^2} \frac{t^{\frac{p}{p-1}-2}}{s+t}  
	\left\| \int_{B_{2R}(0)} G(\cdot-z,t)  g(z) 
	dz \right\|_{L^\frac{p}{p-1}} dt
	\right)^\frac{p}{p-1}  ds. 
\end{aligned}
\end{equation}
By Fubini's theorem and $\int_{\R^n} G dx=1$, 
\[
\begin{aligned}
	&\left\| \int_{B_{2R}(0)} G(\cdot-z,t)  g(z) dz 
	\right\|_{L^\frac{p}{p-1}}^\frac{p}{p-1} \\
	&= 
	\int_{\R^n} 
	\left( \int_{B_{2R}(0)} G(x-z,t)  g(z) dz 
	\right)^\frac{1}{p-1} 
	\int_{B_{2R}(0)} G(x-z,t)  g(z) dz dx \\
	&\leq 
	\left( \sup_{x\in\R^n} \int_{\R^n} G(x-z,t)  g(z) dz 
	\right)^\frac{1}{p-1} 
	\int_{B_{2R}(0)} g(z) dz. 
\end{aligned}
\]
We see from \eqref{eq:upper} that 
$\int_{B_{2R}(0)} g(z) dz \leq C R^{n-2/(p-1)}$ and that 
\[
\begin{aligned}
	&\int_{\R^n} G(x-z,t)  g(z) dz  \\
	&\leq C t^{-\frac{n}{2}} 
	\int_{|x-z|\leq \sqrt{t}} g(z) dz 
	+ C t^{-\frac{n}{2}}
	\sum_{k=0}^\infty \int_{2^k\sqrt{t} \leq |x-z|\leq 2^{k+1}\sqrt{t}}
	e^{-\frac{|x-z|^2}{4t}} g(z) dz  \\
	&\leq 
	C t^{-\frac{n}{2}} 
	\int_{|x-z|\leq \sqrt{t}} g(z) dz 
	+ C t^{-\frac{n}{2}}
	\sum_{k=0}^\infty 
	e^{-4^{k-1} }
	\int_{|x-z|\leq 2^{k+1}\sqrt{t}} g(z) dz 
	\leq 
	C t^{-\frac{1}{p-1}}. 
\end{aligned}
\]
Thus, 
\[
	\left\| \int_{B_{2R}(0)} G(\cdot-z,t)  g(z) dz 
	\right\|_{L^\frac{p}{p-1}}
	\leq 
	\left( C t^{-\frac{1}{(p-1)^2}}   R^{n-\frac{2}{p-1}}  \right)^\frac{p-1}{p}
	\leq 
	C t^{-\frac{1}{p(p-1)}}   R^{\frac{n(p-1)}{p}-\frac{2}{p}}. 
\]

The estimate of $L^{p/(p-1)}$-norm with \eqref{eq:pp1delcalc} 
and direct computations shows that 
\[
\begin{aligned}
	&\int_0^\infty \int_{\R^n} 
	\left| \int_{\R^n} \Delta_x G(x-y,s) \psi_R(y) dy 
	\right|^\frac{p}{p-1} dx ds \\
	&\leq  
	C \int_0^\infty   
	\left( \int_{16}^{R^2} \frac{1}{s+t}  
	t^{\frac{p}{p-1}-2-\frac{1}{p(p-1)}} 
	  R^{\frac{n(p-1)}{p}-\frac{2}{p}}  dt
	\right)^\frac{p}{p-1}  ds \\
	&\leq
	C R^{n-\frac{2}{p-1}} 
	\left( \int_0^{16} + \int_{16}^{R^2} + \int_{R^2}^\infty 
	\right) 
	\left( \int_{16}^{R^2} \frac{1}{s+t}  
	t^{-\frac{p-1}{p}}  dt
	\right)^\frac{p}{p-1}  ds \\
	&\leq 
	C R^{n-\frac{2}{p-1}} ( 1+ \log R + 1 )
	\leq 
	C R^{n-\frac{2}{p-1}}  \log R. 
\end{aligned}
\]
This together with the lower estimate \eqref{eq:lowRR} also shows that,  
by choosing a sufficiently large $L>R$ for each $R$, 
the fraction in \eqref{eq:fractional} can be estimated as 
\[
\begin{aligned}
	\frac{ \displaystyle 
	\int_{\R^n} |g(y)| |\phi_{L,R}(y)|^\frac{p}{p-1} dy}
	{ \displaystyle 
	\int_0^\infty \int_{\R^n} 
	\left| \int_{\R^n} \Delta_y G(y-z,s) \phi_{L,R}(z) dz 
	\right|^\frac{p}{p-1} dy ds } 
	&\geq
	C^{-1} (\log R)^{\frac{p}{p-1}-1} \to \infty
\end{aligned}
\]
as $R\to\infty$. 
Hence $g\not \in X$ and the lemma follows. 
\end{proof}

We are now in a position to prove Proposition \ref{pro:rel}. 

\begin{proof}[Proof of Proposition \ref{pro:rel}]
This proposition immediately follows from 
Lemmas \ref{lem:XL}, \ref{lem:MX}, \ref{lem:XLcounter} 
and \ref{lem:MXcounter}. 
The proof is complete. 
\end{proof}

\section{Estimates of global-in-time solutions}\label{sec:gex}
We give an estimate for global-in-time solutions.

\begin{proposition}\label{pro:esti}
Let $n\geq1$ and $p>p_F$. 
If $u$ is a global-in-time classical solution with 
a nonnegative initial datum $u_0\in L^\infty(\R^n)$, then 
$u_0$ must satisfy 
\[
	\|u_0\|_X \leq 2^\frac{p}{p-1} (p-1)^{-\frac{1}{p-1}}. 
\]
\end{proposition}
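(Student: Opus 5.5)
The plan is a test-function argument in which the test function is a power of a solution of the backward heat equation, built from the function $h_\phi$ of \eqref{eq:hphidef} in the proof of Lemma \ref{lem:XLcounter}. Fix $\phi\in C_0^\infty(\R^n)$ with $\phi\not\equiv0$, set $p':=p/(p-1)$, and take $h:=h_\phi$, so that $|h|^{p'} = 2^{p'}\bigl|\int_{\R^n}\Delta_yG(y-z,s)\phi(z)\,dz\bigr|^{p'}$. For $T>0$ define
\[
	w_T(x,t):=\int_t^T \int_{\R^n} G(x-y,s-t)\,h(y,s)\,dy\,ds,\qquad 0\le t\le T .
\]
This function satisfies $-\partial_t w_T-\Delta w_T=h$ and $w_T(\cdot,T)=0$. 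By the computation giving \eqref{eq:back}, $w_T(\cdot,0)\to\cL[h_\phi]=\phi$ pointwise as $T\to\infty$. The test function will be $\zeta_T:=|w_T|^{p'}$.

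\textbf{Step 1: differential inequality for $\zeta_T$.} Since $p'>1$, $\zeta_T$ is $C^1$. I will work with the regularization $\zeta_{T,\eps}:=(w_T^2+\eps)^{p'/2}$ to avoid difficulties at the zeros of $w_T$, and let $\eps\to0$ at the end. By convexity, the gradient term has a favorable sign, which gives
\[
	-\partial_t\zeta_T-\Delta\zeta_T \le p'|w_T|^{p'-1}|h| = p'\,\zeta_T^{1/p}|h| .
\]

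\textbf{Step 2: testing the equation.} Multiply \eqref{eq:main} by $\zeta_T$ and integrate over $\R^n\times(0,T)$. The boundary term at $t=T$ vanishes because $w_T(\cdot,T)=0$; this is the reason for truncating at $T$ rather than using $\cL[h]$ directly, since otherwise the term $\int u(T)\zeta(T)$ would have the wrong sign. Spatial integrations by parts are justified because $u$ is bounded on $[0,T]$, being a global classical solution, and because $w_T$ and its derivatives decay fast at spatial infinity, as $h(\cdot,s)$ is a derivative of a Gaussian convolution of a compactly supported function. This yields
\[
	\int_{\R^n}u_0|w_T(\cdot,0)|^{p'}dx+\int_0^T\!\!\int_{\R^n}u^p\zeta_T\,dx\,dt
	\le p'\int_0^T\!\!\int_{\R^n}u\,\zeta_T^{1/p}|h|\,dx\,dt .
\]

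\textbf{Step 3: absorbing the right-hand side.} Apply the Young inequality $xy\le x^p+(p-1)p^{-p'}y^{p'}$ with $x=u\zeta_T^{1/p}$ and $y=p'|h|$. The resulting $u^p\zeta_T$ term is absorbed by the left-hand side. Since $(p-1)p^{-p'}p'^{p'}=(p-1)^{1-p'}=(p-1)^{-1/(p-1)}$, we obtain
\[
	\int_{\R^n}u_0|w_T(\cdot,0)|^{p'}dx\le 2^{p'}(p-1)^{-\frac1{p-1}}\int_0^\infty\!\!\int_{\R^n}\Bigl|\int_{\R^n}\Delta_yG(y-z,s)\phi(z)\,dz\Bigr|^{p'}dy\,ds .
\]
Letting $T\to\infty$ and applying Fatou's lemma with $w_T(\cdot,0)\to\phi$ gives the same bound with $|\phi|^{p'}$ on the left. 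Taking the supremum over $\phi$ in \eqref{eq:Xdef} then gives exactly $\|u_0\|_X\le 2^{p/(p-1)}(p-1)^{-1/(p-1)}$.

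\textbf{Main obstacle.} I expect the main work to be the justification in Steps 1 and 2: the regularization of $|w_T|^{p'}$ at the zeros of $w_T$, and the decay needed to discard the spatial boundary terms. If the decay of $w_T$ turns out to be delicate, my first fallback is to additionally multiply by a spatial cutoff $\chi(x/L)$ and let $L\to\infty$. In that limit, the error terms are of the form $\int u\,|\nabla\chi_L|\,|\nabla\zeta_T|$ and $\int u\,|\Delta\chi_L|\,\zeta_T$ over a finite time interval, and these vanish because $u\in L^\infty$ on $[0,T]$ and $\zeta_T$, $\nabla\zeta_T$ are integrable in space.
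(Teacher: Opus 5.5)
Your proof is correct and gives the same constant, but it treats the sign of $h_\phi$ differently from the paper.

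\textbf{The paper's route.} The paper only ever tests with a strictly positive function. It proves $\int u_0\,\cL[h]^{p/(p-1)}\le (p-1)^{-1/(p-1)}\iint h^{p/(p-1)}$ for $h\ge0$. This is done first for $h\in C_0^\infty$, using $(v+\eps\zeta)^{p/(p-1)}\chi_R$ with $\zeta$ a positive Gaussian solution of the backward heat equation, and then for $h\in L^{p/(p-1)}$ by approximation. It reaches signed $\phi$ through $|\phi|=|\cL[h_\phi]|\le\cL[|h_\phi|]$, applied with $h=|h_\phi|$.

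\textbf{Your route.} You keep the signed $h_\phi$ and replace positivity by convexity of $|\cdot|^{p'}$, i.e.\ the Kato-type inequality $(-\partial_t-\Delta)|w_T|^{p'}\le p'|w_T|^{p'-1}|h_\phi|$. The Young step is identical.

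\textbf{Trade-offs.} Your route avoids the density argument for the non-smooth, non-compactly supported $|h_\phi|$. It is also fully explicit, since $w_T(\cdot,t)=e^{t\Delta}\phi-e^{(2T-t)\Delta}\phi$; this gives the decay and the limit $w_T(\cdot,0)\to\phi$ at once. The paper's route gives a smooth test function, and a necessary condition valid for every $0\le h\in L^{p/(p-1)}$ that is of independent interest.

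\textbf{A correction to Steps 1--2.} The regularization $(w_T^2+\eps)^{p'/2}$ does not decay in space. Global data need not be integrable for $p>p_F$; for example $c(1+|x|)^{-2/(p-1)}$ with $c$ small gives a global solution. For such data, at the regularized level both $\int u_0(w_T(\cdot,0)^2+\eps)^{p'/2}$ and the terminal term $\eps^{p'/2}\int u(\cdot,T)$ are infinite. So the boundary term at $t=T$ does not vanish before $\eps\to0$.

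The cutoff $\chi(x/L)$ is therefore required, not a fallback. You must take $\eps\to0$ before $L\to\infty$. With $\chi_L$ present the absorption is still exact, and your estimate of the cutoff errors then closes the argument.

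Alternatively, regularize by $(w_T^2+\eps^2\rho^2)^{p'/2}$, with $\rho>0$ a Gaussian solution of the backward heat equation. This function is convex in $(w_T,\rho)$, satisfies the same differential inequality, and decays, much as the paper's $v+\eps\zeta$ does.
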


To show Proposition \ref{pro:esti}, 
we test \eqref{eq:main} 
by $v_\eps^{p/(p-1)} \chi_R$, 
where $v_\eps$ and $\chi$ are given as follows. 
Let $\eps>0$ and $h(x,t)\geq0$ with $h\in C_0^\infty(\R^n\times(0,\infty))$. 
We choose $T>0$ so large that $\supp h \subset \R^n\times(0,T)$. 
The function $v_\eps$ takes the form 
\begin{equation}\label{eq:vepsvzetadefs}
\left\{ 
\begin{aligned}
	&v_\eps(x,t):= v(x,t) + \eps \zeta(x,t), &&x\in \R^n, \; t\in (0,T), \\
	&v(x,t):= \int_t^T \int_{\R^n} G(x-y,s-t) h(y,s) dy ds, 
	&&x\in \R^n, \; t\in (0,T), \\
	&\zeta(x,t):= \int_{\R^n} 
	G(x-y,T+1-t) e^{-|y|^2} dy, 
	&&x\in \R^n, \; t\in (0,T), 
\end{aligned}
\right. 
\end{equation}
where $G(x,t)= (4\pi t)^{-n/2} e^{-|x|^2/(4t)}$. 
Moreover, $\chi_R$ is taken so that 
$\chi_R(x):= \chi(x/R)$ for $R>1$, where 
$\chi\in C_0^\infty(\R^n)$, 
$0\leq \chi\leq 1$, 
$\chi(x)=1$ for $|x|\leq 1$ and 
$\chi(x)=0$ for $|x|\geq 2$. 
We note that $\zeta$ satisfies 
the backward heat equation 
\begin{equation}\label{eq:zetaeq}
	-\zeta_t -\Delta \zeta =0, \quad x\in \R^n, \; t\in (0,T)
\end{equation}
and that $v$ satisfies the inhomogeneous backward heat equation 
\begin{equation}\label{eq:veq}
\left\{
\begin{aligned}
	&-\partial_t v -\Delta v = h(x,t), &&x\in \R^n, \; t\in(0,T), \\
	&v(x,T)=0, &&x\in \R^n, \\
	&v(x,0)=\cL[h](x), &&x\in \R^n, \\
\end{aligned}
\right.
\end{equation}
where we set 
\begin{equation}\label{eq:cLdef}
	\cL[h](x):= \int_0^\infty \int_{\R^n} G(x-y,s) h(y,s) dy ds. 
\end{equation}
The idea of using the backward heat equation is due to 
Baras--Pierre \cite{BP85}. 

Testing the equation by $v_\eps^{p/(p-1)} \chi_R$ gives the following estimate:

\begin{lemma}\label{lem:test}
Let $u$ be a global-in-time classical solution of \eqref{eq:main} 
with a nonnegative initial datum $u_0\in L^\infty(\R^n)$. 
Then, for $0<\delta<T$, 
\[
\begin{aligned}
	&\int_{\R^n} u(x,\delta) v_\eps^\frac{p}{p-1}(x,\delta) \chi_R(x) dx \\
	&\leq 
	(p-1)^{-\frac{1}{p-1}} \int_\delta^T \int_{\R^n} h^\frac{p}{p-1}(x,t) dxdt  
	+\int_{\R^n} u(x,T) v_\eps^\frac{p}{p-1}(x,T) dx \\
	&\quad 
	- \int_\delta^T \int_{\R^n} 
	u\left( 2\nabla (v_\eps^\frac{p}{p-1}) \cdot \nabla \chi_R 
	+ v_\eps^\frac{p}{p-1} \Delta \chi_R \right) dxdt. 
\end{aligned}
\]
\end{lemma}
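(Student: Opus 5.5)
Write $w:=v_\eps^{p/(p-1)}$ and $q:=p/(p-1)$. The plan is to compute $\frac{d}{dt}\int_{\R^n} u\,w\,\chi_R\,dx$ on $[\delta,T]$, integrate by parts in space, and absorb the nonlinear term with Young's inequality. Both $v$ and $\zeta$ are smooth, bounded and nonnegative on $\R^n\times[\delta,T]$, and $\zeta>0$. Hence $v_\eps\geq \eps\zeta>0$ on compact time intervals, so $w$ is smooth there and no singularity appears at the zero set of $v$. Also $u$ is classical and bounded on $\R^n\times[\delta,T]$ (global solution, $L^\infty$ theory), and $\chi_R$ has compact support. All integrations by parts in $x$ are therefore legitimate without boundary terms, and differentiation under the integral is justified.

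First, using $u_t=\Delta u+u^p$ and integrating by parts twice,
\[
\frac{d}{dt}\int u w\chi_R\,dx=\int u^p w\chi_R\,dx+\int u\bigl(w_t+\Delta w\bigr)\chi_R\,dx+\int u\bigl(2\nabla w\cdot\nabla\chi_R+w\Delta\chi_R\bigr)dx .
\]
Next, compute $w_t+\Delta w$. By \eqref{eq:zetaeq} and \eqref{eq:veq} we have $-\partial_t v_\eps-\Delta v_\eps=h$. Therefore
\[
w_t+\Delta w=q v_\eps^{q-1}(\partial_t v_\eps+\Delta v_\eps)+q(q-1)v_\eps^{q-2}|\nabla v_\eps|^2\geq -q v_\eps^{q-1}h .
\]
Since $q-1=1/(p-1)$ and $u w=u\,v_\eps^{q}$, this gives
\[
\frac{d}{dt}\int u w\chi_R\geq\int\bigl(u^p v_\eps^{q}-q\,u\,v_\eps^{1/(p-1)}h\bigr)\chi_R+\int u\bigl(2\nabla w\cdot\nabla\chi_R+w\Delta\chi_R\bigr).
\]

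Then apply Young's inequality pointwise with $a=u v_\eps^{1/(p-1)}$, so that $a^p=u^pv_\eps^{q}$. We need $\sup_{a\ge0}(qah-a^p)$: at the maximizer $pa^{p-1}=qh$, and the maximum equals $(p-1)\bigl(qh/p\bigr)^{p/(p-1)}=(p-1)^{1-q}h^{q}=(p-1)^{-1/(p-1)}h^q$. Using $0\le\chi_R\le1$, we get
\[
q\,u\,v_\eps^{1/(p-1)}h\chi_R-u^pv_\eps^q\chi_R\leq(p-1)^{-\frac1{p-1}}h^q .
\]
Integrate over $[\delta,T]$ and rearrange. This yields the claim, with the endpoint term $\int u(T)w(T)\chi_R\le\int u(T)v_\eps^{q}(T)\,dx$, using $\chi_R\le1$ and $u\ge0$. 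That integral is finite because $v(\cdot,T)=0$ and $\zeta(\cdot,T)$ is Gaussian.

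The only delicate point is regularity near $t=T$ and at the endpoints. The function $\zeta$ is smooth up to $t=T$ since $T+1-t\geq1$. The function $v$ is smooth up to $T$ because $h$ vanishes near $t=T$, and $\delta>0$ avoids the initial time. This is why $\eps\zeta$ is added: it makes $w$ a $C^{2,1}$ function, so the chain rule holds.
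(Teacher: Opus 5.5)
Your proposal is correct and matches the paper's proof: you test the equation against $v_\eps^{p/(p-1)}\chi_R$, integrate by parts, use $-\partial_t v_\eps-\Delta v_\eps=h$ to discard the nonpositive term $-\frac{p}{(p-1)^2}\int\!\!\int u\,v_\eps^{\frac{2-p}{p-1}}|\nabla v_\eps|^2\chi_R$, and then absorb $u^p v_\eps^{p/(p-1)}$ with the same pointwise Young bound $(p-1)^{-\frac{1}{p-1}}h^{\frac{p}{p-1}}$. Writing this as $\frac{d}{dt}\int u\,w\,\chi_R\,dx$ integrated over $[\delta,T]$, instead of integrating the multiplied equation directly, is only a cosmetic difference, and your regularity remarks (positivity of $v_\eps$ from $\eps\zeta$, compact support of $\chi_R$, boundedness of $u$ on $[\delta,T]$) justify the same steps the paper uses.
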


\begin{proof}
We multiply $u_t-\Delta u =u^p$ by $v_\eps^{p/(p-1)} \chi_R$ 
and integrate it. Then, 
\[
	\int_\delta^T \int (u_t-\Delta u) v_\eps^\frac{p}{p-1} \chi_R 
	= \int_\delta^T \int u^p v_\eps^\frac{p}{p-1} \chi_R, 
\]
where we abbreviate $dxdt$ and $\int(\cdots):=\int_{\R^n}(\cdots)$. 
By applying the integration by parts to the left-hand side, 
\[
\begin{aligned}
	&\int u(x,T) v_\eps^\frac{p}{p-1}(x,T) \chi_R 
	- \int u(x,\delta) v_\eps^\frac{p}{p-1}(x,\delta) \chi_R \\
	&+\frac{p}{p-1}  \int_\delta^T \int u v_\eps^\frac{1}{p-1} 
	(-\partial_t v_\eps - \Delta v_\eps) \chi_R 
	- \frac{p}{(p-1)^2} \int_\delta^T \int 
	u v_\eps^\frac{2-p}{p-1} |\nabla v_\eps|^2 \chi_R \\
	&- \int_\delta^T \int 
	u\left( 2\nabla (v_\eps^\frac{p}{p-1}) \cdot \nabla \chi_R 
	+ v_\eps^\frac{p}{p-1} \Delta \chi_R \right) 
	= \int_\delta^T \int u^p v_\eps^\frac{p}{p-1} \chi_R. 
\end{aligned}
\]
Here we remark that our choice of $v_\eps$ guarantees that $v_\eps>0$, and so 
$v_\eps^{(2-p)/(p-1)}$ is well-defined even when $p>2$.

Since $-\partial_t v_\eps-\Delta v_\eps = h$ 
by \eqref{eq:zetaeq} and \eqref{eq:veq}, 
the fourth term in the left-hand side is nonpositive, 
$u(x,T) v_\eps^{p/(p-1)}(x,T)\geq0$  and $\chi_R\leq 1$, 
we see that 
\[
\begin{aligned}
	&\int u(x,\delta) v_\eps^\frac{p}{p-1}(x,\delta) \chi_R
	\leq  \int_\delta^T \int 
	\left( \frac{p}{p-1}  u v_\eps^\frac{1}{p-1} h 
	- u^p v_\eps^\frac{p}{p-1} \right)\chi_R
	\\
	&+\int u(x,T) v_\eps^\frac{p}{p-1}(x,T) 
	- \int_\delta^T \int_{\R^n} 
	u\left( 2\nabla (v_\eps^\frac{p}{p-1}) \cdot \nabla \chi_R 
	+ v_\eps^\frac{p}{p-1} \Delta \chi_R \right). 
\end{aligned}
\]
Fix $x\in \R^n$ and $\delta<t<T$. Then by $1+(1/(p-1))=p/(p-1)$, 
\[
\begin{aligned}
	&\frac{p}{p-1}  u(x,t) v_\eps^\frac{1}{p-1}(x,t) h(x,t) 
	- u^p(x,t) v_\eps^\frac{p}{p-1}(x,t) 
	\leq 
	\sup_{z\geq0} 
	\left( \frac{p}{p-1}  h(x,t) z - z^p \right) \\
	&= \frac{p}{p-1} h(x,t) \left( \frac{h(x,t)}{p-1} \right)^\frac{1}{p-1} 
	- \left( \left( \frac{h(x,t)}{p-1} \right)^\frac{1}{p-1} \right)^p 
	= (p-1)^{-\frac{1}{p-1}} h^\frac{p}{p-1} (x,t). 
\end{aligned}
\]
Then the lemma follows. 
\end{proof}

By taking the limits of the parameters in Lemma \ref{lem:test}, 
we obtain the following:

\begin{lemma}\label{lem:lim}
Let $u$ be a global-in-time classical solution of \eqref{eq:main} 
with a nonnegative initial datum $u_0\in L^\infty(\R^n)$. 
Then, 
\begin{equation}\label{eq:hsmooth}
	\int_{\R^n} u_0(x) \cL[h]^\frac{p}{p-1}(x) dx 
	\leq 
	(p-1)^{-\frac{1}{p-1}} \int_0^\infty \int_{\R^n} h^\frac{p}{p-1}(x,t) dxdt 
\end{equation}
for $h(x,t)\geq0$ with $h\in C_0^\infty(\R^n\times(0,\infty))$, 
where $\cL$ is defined by \eqref{eq:cLdef}. 
Moreover, \eqref{eq:hsmooth} also holds 
for $h(x,t)\geq0$ with $h\in L^{p/(p-1)}(\R^n\times(0,\infty))$. 
\end{lemma}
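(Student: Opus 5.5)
We start from Lemma \ref{lem:test} with fixed $h\geq0$, $h\in C_0^\infty(\R^n\times(0,\infty))$, $\supp h\subset \R^n\times(0,T)$, and send $R\to\infty$, then $\delta\to0$, then $\eps\to0$.

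\emph{Step 1: $R\to\infty$.} Since $u$ is a global classical solution, it is bounded on $\R^n\times[\delta,T]$. The functions $v$ and $\zeta$ are bounded and smooth with bounded gradients. Moreover $v$ decays like a Gaussian in $x$, because $h$ has compact support, and $\zeta$ is a Gaussian of width $\sim\sqrt{T+1}$. For $p/(p-1)\ge1$, the gradient $\nabla(v_\eps^{p/(p-1)})=\frac{p}{p-1}v_\eps^{1/(p-1)}\nabla v_\eps$ is bounded and also decays in $x$. The cutoff terms are supported in $R\le|x|\le2R$ and carry the factors $|\nabla\chi_R|\le C/R$ and $|\Delta\chi_R|\le C/R^2$. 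Hence
\[
\int_\delta^T\!\!\int u\bigl(2\nabla(v_\eps^{p/(p-1)})\cdot\nabla\chi_R+v_\eps^{p/(p-1)}\Delta\chi_R\bigr)
\le C_\delta\Bigl(\tfrac1R+\tfrac1{R^2}\Bigr)\int_\delta^T\!\!\int_{|x|\ge R}\bigl(|\nabla v_\eps|+v_\eps\bigr)\,dx\,dt\to0,
\]
since $v_\eps,\nabla v_\eps\in L^1$. The left side converges by monotone convergence. The term $\int u(x,T)v_\eps^{p/(p-1)}(x,T)\,dx$ equals $\eps^{p/(p-1)}\int u(x,T)\zeta^{p/(p-1)}(x,T)\,dx$, because $v(\cdot,T)=0$, and this is finite since $u(\cdot,T)\in L^\infty$ and $\zeta\in L^1$.

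\emph{Step 2: $\delta\to0$.} By classical well-posedness, $u(\cdot,\delta)\to u_0$ in a weak sense; concretely, $u(\delta)\ge e^{\delta\Delta}u_0$. The function $v_\eps$ is continuous up to $t=0$, with $v_\eps(\cdot,\delta)\to\cL[h]+\eps\zeta(\cdot,0)$ uniformly. Fatou's lemma applied along a.e.-convergent data, or the pairing $\int u(\delta)\psi\to\int u_0\psi$ for $\psi\in L^1$ (which holds because $u(\delta)-e^{\delta\Delta}u_0\to0$ in $L^\infty$ as $\delta\to0$), gives
\[
\int u_0\bigl(\cL[h]+\eps\zeta(\cdot,0)\bigr)^{p/(p-1)}\le (p-1)^{-\frac1{p-1}}\|h\|_{p/(p-1)}^{p/(p-1)}+\eps^{p/(p-1)}\int u(x,T)\zeta^{p/(p-1)}(x,T)\,dx .
\]

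\emph{Step 3: $\eps\to0$.} Drop the $\eps\zeta$ term on the left, using $v_\eps\ge v$. The last term on the right tends to $0$, which gives \eqref{eq:hsmooth}.

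\emph{Step 4: extension to $h\ge0$, $h\in L^{p/(p-1)}$.} Approximate $h$ by nonnegative $h_k\in C_0^\infty$ with $h_k\to h$ in $L^{p/(p-1)}(\R^n\times(0,\infty))$, obtained by truncation and mollification. Passing to a subsequence, we want $\cL[h_k]\to\cL[h]$ a.e. To see this, note that $\cL$ is a positive linear operator, and locally $\cL[|h_k-h|]$ can be controlled. Heat-kernel estimates give the bound $\|G(\cdot,s)\|_{L^p}\sim s^{-n/(2p')}$, with $p'=p/(p-1)$, so the time integral is locally integrable near $s=0$ but not at infinity. The fallback is therefore to test against a fixed compact ball: $\cL$ maps $L^{p'}$ into $L^1_{\rm loc}$ after splitting into $s<1$ and $s>1$. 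For $s>1$ the pointwise bound $\|G(\cdot,s)\|_{L^p}\|h(\cdot,s)\|_{L^{p'}}$ integrated against $\|h(\cdot,s)\|_{p'}$ needs $\int_1^\infty s^{-np/(2(p-1))\cdot\ldots}$; this is where $p>p_F$ enters.

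If that fails, use Fatou's lemma directly: $\cL[h]\le\liminf_k\cL[h_k]$ along a subsequence with $h_k\to h$ a.e. Choosing $h_k\uparrow$ (for instance $h_k=\min(h,k)\chi_{\{|x|<k,\,1/k<t<k\}}$ mollified slightly from below) gives monotone convergence of $\cL[h_k]$ to $\cL[h]$. The right-hand side converges in norm, so we conclude.

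The main obstacle is this last step, making the density argument rigorous, and Step 2's passage $\delta\to0$; the monotone approximation route avoids the former.
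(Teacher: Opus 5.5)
Your proof is correct and follows the paper's route: let $R\to\infty$, then $\delta\to0$, then $\eps\to0$ in Lemma~\ref{lem:test}, and extend to $h\in L^{p/(p-1)}$ by approximation, with your Steps 2 and 4 filling in details the paper leaves implicit. In Step 4 the Fatou argument alone suffices: take any nonnegative $C_0^\infty$ approximants converging in $L^{p/(p-1)}$ and a.e., apply Fatou once to get $\cL[h]\le\liminf_k\cL[h_k]$ pointwise, and once more in the $u_0$-weighted integral, so the speculative $L^1_{\rm loc}$ bound invoking $p>p_F$ and the ``mollified from below'' monotone scheme (which is not justified as stated) should simply be dropped.
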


\begin{proof}
By approximation, 
it suffices to consider the case 
$0\leq h\in C_0^\infty(\R^n\times(0,\infty))$. 
Since $u$ is a global-in-time classical solution 
with an initial datum $u_0\in L^\infty(\R^n)$, 
we have $u\in L^\infty(\R^n\times(0,T))$. 
Moreover, we can check that 
\[
\begin{aligned}
	&v_\eps\in L^\infty(\delta,T; L^1(\R^n)) 
	\cap L^\infty(\R^n\times(\delta,T)), 
	\quad 
	\nabla v_\eps \in L^\infty(\delta,T; L^1(\R^n)), \\
	&|\nabla \chi_R|\leq C R^{-1}, \quad 
	|\Delta \chi_R|\leq C R^{-2}, 
\end{aligned}
\]
where $C>0$ is an absolute constant. 
Then, letting $R\to\infty$ in the inequality 
obtained in Lemma \ref{lem:test} yields 
\[
\begin{aligned}
	&\int_{\R^n} u(x,\delta) v_\eps^\frac{p}{p-1}(x,\delta) dx \\
	&\leq 
	(p-1)^{-\frac{1}{p-1}} \int_\delta^T \int_{\R^n} h^\frac{p}{p-1}(x,t) dxdt  
	+\int_{\R^n} u(x,T) v_\eps^\frac{p}{p-1}(x,T) dx. 
\end{aligned}
\]
Letting $\delta\to0$ with $v_\eps= v + \eps \zeta$  gives 
\[
\begin{aligned}
	&\int_{\R^n} u_0(x) \left( 
	v(x,0) + \eps \zeta(x,0) 
	\right)^\frac{p}{p-1}dx \\
	&\leq 
	(p-1)^{-\frac{1}{p-1}} \int_0^T \int_{\R^n} h^\frac{p}{p-1}(x,t) dxdt  
	+\int_{\R^n} u(x,T) \left( v(x,T) + \eps \zeta(x,T)  
	\right)^\frac{p}{p-1} dx. 
\end{aligned}
\]

By \eqref{eq:vepsvzetadefs} for $\zeta$ and by 
$v(x,0)=\cL[h](x)$ 
and $v(x,T)=0$ as in \eqref{eq:veq}, we see that 
\[
\begin{aligned}
	&\int_{\R^n} u_0(x) \left( 
	\cL[h](x) + \eps \int_{\R^n} 
	G(x-y,T+1) e^{-|y|^2} dy
	\right)^\frac{p}{p-1}dx \\
	&\leq 
	(p-1)^{-\frac{1}{p-1}} \int_0^T \int_{\R^n} h^\frac{p}{p-1} dxdt  \\
	&\quad 
	+\int_{\R^n} u(x,T) \left( 
	\eps  \int_{\R^n} G(x-y,1) e^{-|y|^2} dy
	\right)^\frac{p}{p-1} dx. 
\end{aligned}
\]
Since the left-hand side converges to 
the left-hand side in \eqref{eq:hsmooth} 
as $\eps\to0$, 
it suffices to check that the second term in the right-hand side 
converges to $0$. 
Then, the smoothing estimate shows that 
\[
\begin{aligned}
	&\int_{\R^n} u(x,T) \left( 
	\eps  \int_{\R^n} G(x-y,1) e^{-|y|^2} dy
	\right)^\frac{p}{p-1} dx \\
	&\leq 
	\eps^\frac{p}{p-1}
	\| u(\cdot,T)\|_{L^\infty(\R^n)}
	\| e^{-|\cdot|^2} \|_{L^\frac{p}{p-1}(\R^n)}^\frac{p}{p-1} \to 0
	\quad \mbox{ as } \eps\to 0. 
\end{aligned}
\]
Hence we obtain the desired estimate \eqref{eq:hsmooth}. 
\end{proof}

Based on Lemma \ref{lem:lim}, we prove Proposition \ref{pro:esti}.

\begin{proof}[Proof of Proposition \ref{pro:esti}]
For $\phi\in C_0^\infty(\R^n)$, we define $h_\phi$ 
and $\cL[h_\phi]$ as in \eqref{eq:hphidef}. 
Then by \eqref{eq:back} and Lemma \ref{lem:lim}, 
\[
\begin{aligned}
	&\int_{\R^n} |u_0(x)| |\phi(x)|^\frac{p}{p-1} dx 
	= \int_{\R^n} |u_0(x)| |\cL[h_\phi](x)|^\frac{p}{p-1} dx \\
	&\leq 
	\int_{\R^n} |u_0(x)| ( \cL[|h_\phi|](x) )^\frac{p}{p-1} dx 
	\leq 
	(p-1)^{-\frac{1}{p-1}} 
	\int_0^\infty \int_{\R^n} |h_\phi|^\frac{p}{p-1}(x,t) dxdt, 
\end{aligned}
\]
and so we obtain 
\[
\begin{aligned}
	&\int_{\R^n} |u_0(x)| |\phi(x)|^\frac{p}{p-1} dx  \\
	&\leq 
	2^\frac{p}{p-1} (p-1)^{-\frac{1}{p-1}} 
	\int_0^\infty \int_{\R^n} 
	\left| 
	\int_{\R^n} \Delta_x G(x-y,t) \phi (y) dy
	\right|^\frac{p}{p-1} dxdt. 
\end{aligned}
\]
Therefore, the definition of $X$-norm in \eqref{eq:Xdef} yields 
the desired estimate for $\|u_0\|_X$. 
The proof is complete. 
\end{proof}

\section{Existence of global-in-time solutions}\label{sec:ex}
We prove the existence of a global-in-time solution.

\begin{proposition}\label{pro:ex}
Let $n\geq1$ and $p>p_F$. 
Then there exists $c>0$ depending only on $n$ and $p$ 
such that the following holds: 
If a nonnegative function 
$u_0\in L^\infty(\R^n)$ satisfies 
$\|u_0\|_X\leq c$, then \eqref{eq:main} admits 
a unique global-in-time classical solution $u_*$. 
Moreover, there exists $C>0$ depending only on $n$ and $p$ 
such that 
\[
	u_*(x,t)\leq C \|u_0\|_X t^{-\frac{1}{p-1}} 
	\quad \mbox{ for } t>0. 
\]
\end{proposition}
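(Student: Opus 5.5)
The plan is to build a global supersolution of the integral equation
\[
u(t)=e^{t\Delta}u_0+\int_0^t e^{(t-s)\Delta}u(s)^p\,ds
\]
of the form $\bar u:=2U$, where $U(x,t):=\int_{\R^n}G(x-y,t)u_0(y)\,dy$. The key estimate to aim for is the pointwise bound
\begin{equation*}
\int_0^t\int_{\R^n}G(x-y,t-s)\,U(y,s)^p\,dy\,ds\leq K\|u_0\|_X^{p-1}\,U(x,t),\qquad x\in\R^n,\ t>0,
\end{equation*}
with $K$ depending only on $n$ and $p$. Granting it, $2U\geq U+\int_0^t e^{(t-s)\Delta}(2U)^p\,ds$ as soon as $2^pK\|u_0\|_X^{p-1}\leq1$, which fixes $c$. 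Monotone iteration from $u^{(0)}=0$, namely $u^{(k+1)}=e^{t\Delta}u_0+\int_0^te^{(t-s)\Delta}(u^{(k)})^p\,ds$, then gives a nondecreasing sequence bounded by $2U\leq 2\|u_0\|_{L^\infty}$. Its limit $u_*$ is a bounded mild solution on every $(0,T)$, hence the classical $L^\infty$-solution, so $T_*(u_0)=\infty$. Uniqueness is the standard uniqueness of $L^\infty$-solutions.

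The decay bound is then immediate from Lemma~\ref{lem:MX}. Splitting $\int G(x-y,t)u_0(y)\,dy$ dyadically in $|x-y|/\sqrt t$, exactly as in the estimate of $\int G(x-z,t)g(z)\,dz$ in the proof of Lemma~\ref{lem:MXcounter}, gives
\[
U(x,t)\leq Ct^{-\frac n2}\sup_{k}e^{-c4^k}(2^k\sqrt t)^{n-\lambda_c}\|u_0\|_{M^{1,\lambda_c}}\leq C\|u_0\|_X\,t^{-\frac1{p-1}}.
\]
Hence $u_*\leq 2U\leq C\|u_0\|_Xt^{-1/(p-1)}$.

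The main obstacle is the displayed key estimate. The Morrey decay alone fails for it: inserting $U(s)^{p-1}\leq C\|u_0\|_X^{p-1}s^{-1}$ gives $U(t)\int_0^t s^{-1}\,ds$, which diverges logarithmically. This is consistent with Lemma~\ref{lem:MXcounter}, so the genuine dual content of $\|\cdot\|_X$ must be used. I would proceed in three steps.

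\emph{Step 1 (dualize the definition of $X$).} Use \eqref{eq:back} and the equivalence of the denominator in \eqref{eq:Xdef} with a heat-semigroup Besov norm (\cite{BCDbook}), so that $\Delta G(\cdot,s)$ may be traded for $s^{-1}G(\cdot,cs)$ up to constants. Then $\|u_0\|_X\leq1$ becomes the trace inequality
\[
\int u_0\,(\cL[h])^{p'}\leq C\iint h^{p'},\qquad p'=\frac{p}{p-1},
\]
for nonnegative $h$, which is the converse direction of Lemma~\ref{lem:lim}. By $L^{p'}$--$L^p$ duality this is equivalent to
\[
\int_0^\infty\!\!\int\bigl(e^{s\Delta}(u_0\psi)\bigr)^p\,dy\,ds\leq C\|u_0\|_X^{p-1}\int u_0\,\psi^{\,p}
\]
for nonnegative $\psi$; the normalization should be checked through the homogeneity in $\psi$.

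\emph{Step 2 (localize).} Fix $(x,t)$ and apply this with the weight $\psi(z)=G(x-z,t)^{1/p}$, suitably renormalized. The right-hand side is then comparable to $U(x,t)$.

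\emph{Step 3 (compare kernels).} Bound the left-hand side of the key estimate by the left-hand side in Step~1, in the spirit of Kalton--Verbitsky. This uses the quasi-metric and Harnack property of the Gaussian,
\[
G(x-y,t-s)\,G(y-z,s)\lesssim G(x-z,t)\bigl(G(y-z,cs)+G(x-y,c(t-s))\bigr),
\]
and its consequence $U(y,s)\,G(x-y,t-s)^{1/p}\lesssim e^{cs\Delta}(u_0\psi)(y)$ for $s\leq t/2$. The range $t/2\leq s\leq t$ is handled separately, where $U(s)\approx U$ at comparable times and the Morrey bound suffices since $\int_{t/2}^t s^{-1}\,ds$ is finite.

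I expect Step 3 to need the most care, in particular the time-splitting and the constants in the kernel comparisons. If the pointwise route proves too delicate, the fallback is to invoke the abstract Kalton--Verbitsky criterion for the equation $u=Ku^p+f$ directly.
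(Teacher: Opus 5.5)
The step that fails as written is the Gaussian comparison in Step~3. The inequality $G(x-y,t-s)G(y-z,s)\lesssim G(x-z,t)\bigl(G(y-z,cs)+G(x-y,c(t-s))\bigr)$ is false for every $c$. Take $z=0$, $s\le t/2$, $y=\frac{s}{t}x$ and $|x|^2=Kt^2/s$. The left side equals $G(x,t)\bigl(4\pi s(t-s)/t\bigr)^{-n/2}$, while the right side is at most $2c^{-n/2}e^{-K/(4c)}$ times this, which tends to $0$ as $K\to\infty$.

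A valid 3G-type bound needs a dilated factor $G(x-z,c't)$ with $c'>1$, for instance $G(x-y,t-s)G(y-z,s)\le CG(x-z,2t)G(y-z,2s)$ for $s\le t/2$. But then Step~2 runs with $\psi=G(x-\cdot,c't)^{1/p}$, and the right-hand side becomes $\int u_0\psi^p=U(x,c't)$. This is not controlled by $U(x,t)$: only $U(x,t)\le (c')^{n/2}U(x,c't)$ holds, and the reverse fails, e.g., for $u_0=\chi_{B_1(0)}$ as $|x|\to\infty$. So neither the key estimate nor $\Phi[2U]\le 2U$ follows from your route, and no renormalization of $\psi$ repairs this.

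The missing ingredient is the exact bridge identity
\[
\frac{|x-y|^2}{t-s}+\frac{|y-z|^2}{s}=\frac{|x-z|^2}{t}+\frac{t}{s(t-s)}\Bigl|y-\frac{s}{t}x-\frac{t-s}{t}z\Bigr|^2 ,
\]
applied to $G(x-y,t-s)^{1/p}G(y-z,s)$. Split $\frac{|y-z|^2}{s}=\frac1p\frac{|y-z|^2}{s}+\frac{p-1}{p}\frac{|y-z|^2}{s}$. Combine the first part with $\frac1p\frac{|x-y|^2}{t-s}$ via the identity, and keep the second part as a Gaussian in $y-z$. This gives
\[
G^{1/p}(x-y,t-s)G(y-z,s)\le C\Bigl(\frac{t}{t-s}\Bigr)^{\frac{n}{2p}}G^{1/p}(x-z,t)\,G\Bigl(y-z,\frac{p}{p-1}s\Bigr),
\]
with no dilation in $t$. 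It works precisely because only the $1/p$-th power of the first kernel appears. This is Lemma~\ref{lem:former}; with it your ``consequence'' holds with $c=p/(p-1)$ and $\psi=G(x-\cdot,t)^{1/p}$ exactly.

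Everything else in your plan then matches the paper:
\begin{itemize}
\item Steps~1--2 are Lemma~\ref{lem:Vp}. The paper carries out your duality by hand: $V^p=V\cdot V^{p-1}$, H\"older, the $X$-norm tested on $\chi_R\,\cL[V^{p-1}]$, and a Hilbert-kernel bound for $(s+\tau)^{-1}$ in time. It truncates to $s\in(1/m,m)$ before dividing by $(\iint V^p)^{(p-1)/p}$.
\item On $[t/2,t]$ the mechanism is pulling out $\|U(s)\|_\infty^{p-1}$ and using $e^{(t-s)\Delta}U(s)=U(t)$, not a pointwise comparability of $U(s)$ and $U(t)$.
\item Your choices $\overline{u}=2U$ and decay via Lemma~\ref{lem:MX} are harmless variants of the paper's $U+2^p\int_0^te^{(t-s)\Delta}U^p\,ds$ and its direct test with $G(x-\cdot,t)^{(p-1)/p}$.
\end{itemize}
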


Throughout this section, 
we always assume that $u_0\in L^\infty(\R^n)$ satisfies 
$\|u_0\|_X<\infty$, $u_0\geq0$ and $u_0\not \equiv 0$. 
We note from 
Proposition \ref{pro:rel} and 
Lemma \ref{lem:trivial}  that 
such a nontrivial initial datum exists for $p>p_F$ and 
does not exist for $1<p\leq p_F$.

We show Proposition \ref{pro:ex} by constructing a supersolution $\overline{u}$ 
in the sense that 
\begin{align}
	&\label{eq:inteq} 
	\overline{u}(x,t) 
	\geq \Phi[\overline{u}](x,t) 
	:= U(x,t) + \int_0^t \int_{\R^n} G(x-y,t-s) \overline{u}^p(y,s) dyds, \\
	&\label{eq:Udef}
	U(x,t):= \int_{\R^n} G(x-y,t) u_0(y) dy, 
\end{align}
where $G(x,t):= (4\pi t)^{-n/2} e^{-|x|^2/(4t)}$ 
is the heat kernel on $\R^n$. 
The idea of using supersolutions of the integral equation 
is due to Robinson--Sier{\.z}{\k{e}}ga \cite{RS13} 
and Hisa--Ishige \cite{HI18}. 
In this paper, we take a supersolution of the form 
\begin{equation} \label{eq:super}
	\overline{u}(x,t):= 
	U(x,t) + 2^p \int_0^t \int_{\R^n} G(x-y,t-s) 
	U^p (y,s) dyds. 
\end{equation}
In what follows, we prove that this $\overline{u}$ 
is indeed a supersolution 
provided that $\|u_0\|_X$ is small enough.  
To prove it, we give pointwise estimates of $\overline{u}$. 
Let us split the time integral in \eqref{eq:super} 
into $\int_0^t = \int_0^{t/2} + \int_{t/2}^t$
and estimate the first one. 
In the rest of this section, 
constants $C>0$ depend only on $n$ and $p$.

\begin{lemma}\label{lem:former}
For $x\in \R^n$ and $t>0$, 
\[
	\int_0^{t/2} \int_{\R^n} G(x-y,t-s) 
	U^p (y,s) dyds 
	\leq 
	C \int_0^\infty \int_{\R^n} 
	\left( V(x,y,t,s)
	\right)^p 
	dyds, 
\]
where $V$ is a positive function given by 
\begin{equation}\label{eq:Fdef}
	V=V(x,y,t,s):= 
	\int_{\R^n}
	G^\frac{1}{p} (x-z,t) G\left( y-z, s \right)
	u_0(z) dz. 
\end{equation}
\end{lemma}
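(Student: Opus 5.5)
The idea is to prove a pointwise kernel inequality that absorbs the factor $G(x-y,t-s)$ into the integral defining $U(y,s)$, so that $G(x-y,t-s)U^p(y,s)$ is dominated by $V^p$ evaluated at a rescaled time. We then integrate in $(y,s)$ and change variables in $s$.

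\emph{Step 1 (the key pointwise bound).} Let $0<s\le t/2$. We claim that for all $x,y,z\in\R^n$,
\[
	G^\frac{1}{p}(x-y,t-s)\, G(y-z,s) \leq C\, G^\frac{1}{p}(x-z,t)\, G\Bigl(y-z,\tfrac{p}{p-1}s\Bigr).
\]
There are two ingredients.

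The first is the elementary convexity inequality $|x-z|^2\le |x-y|^2/\theta+|y-z|^2/(1-\theta)$ with $\theta=(t-s)/t$, which reads
\[
	\frac{|x-z|^2}{t}\le \frac{|x-y|^2}{t-s}+\frac{|y-z|^2}{s}.
\]
Dividing by $4p$ and exponentiating gives
\[
	e^{-\frac{|x-y|^2}{4p(t-s)}}\, e^{-\frac{|y-z|^2}{4ps}} \le e^{-\frac{|x-z|^2}{4pt}}.
\]

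The second is to split $e^{-|y-z|^2/(4s)}=e^{-|y-z|^2/(4ps)}\,e^{-\frac{p-1}{p}\frac{|y-z|^2}{4s}}$. The second factor, multiplied by $s^{-n/2}$, equals $(p/(p-1))^{n/2}(4\pi)^{n/2}\,G(y-z,ps/(p-1))$.

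For the prefactors, $t-s\ge t/2$ gives $(t-s)^{-n/(2p)}\le 2^{n/(2p)}t^{-n/(2p)}$, and this powers of $t$ is exactly what $G^{1/p}(x-z,t)$ requires. Combining these pieces proves the claim with $C$ depending only on $n$ and $p$.

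\emph{Step 2 (integrate in $z$).} Multiply the claim by $u_0(z)\ge0$ and integrate in $z$. The left side becomes $G^{1/p}(x-y,t-s)U(y,s)$, and the right side becomes $C\,V(x,y,t,ps/(p-1))$ by \eqref{eq:Fdef}. Raising both sides to the $p$-th power gives
\[
	G(x-y,t-s)\,U^p(y,s)\le C\,V\Bigl(x,y,t,\tfrac{p}{p-1}s\Bigr)^p, \qquad 0<s\le t/2 .
\]

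\emph{Step 3 (integrate in $(y,s)$).} Integrate over $y\in\R^n$ and $s\in(0,t/2)$, and substitute $\sigma=ps/(p-1)$; this costs only the constant factor $(p-1)/p$. Since the integrand is nonnegative, we may enlarge the range of $\sigma$ to $(0,\infty)$, which yields the stated inequality.

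The only nontrivial point is Step 1. There one has to make sure the Gaussian in $x-z$ keeps exactly the time $t$ and the power $1/p$, with no rescaling of $t$, because $V$ is defined with $G^{1/p}(x-z,t)$. A crude bound via $|x-z|^2\le 2|x-y|^2+2|y-z|^2$ would only give $G^{1/p}(x-z,ct)$ for some $c\neq1$, which cannot be undone by a change of variables. The weighted convexity inequality avoids this, and all the rescaling lands in the $s$-variable, where the substitution in Step 3 removes it.
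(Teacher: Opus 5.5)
Your proof is correct and follows essentially the paper's route. The paper completes the square to write $G^{1/p}(x-y,t-s)G(y-z,s)$ as $(p/(p-1))^{n/2}(t/(t-s))^{n/(2p)}G^{1/p}(x-z,t)\,G(y-z,\frac{p}{p-1}s)$ times a Gaussian factor $\le 1$; your weighted convexity inequality is exactly the statement that this discarded factor is at most $1$, and the remaining steps (the prefactor bound from $s\le t/2$, integration against $u_0$, the substitution $\tau=ps/(p-1)$, and enlarging the time interval) coincide with the paper's.
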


\begin{proof}
By \eqref{eq:Udef}, we have 
\[
	G(x-y,t-s) U^p (y,s)
	= \left( \int_{\R^n} G^\frac{1}{p} (x-y,t-s) G(y-z,s) u_0(z) dz \right)^p. 
\]
For the integrand, completing the square shows that 
\[
\begin{aligned}
	&G^\frac{1}{p} (x-y,t-s) G(y-z,s)  \\
	&= \left( \frac{p}{p-1} \right)^\frac{n}{2} 
	\left( \frac{t}{t-s} \right)^\frac{n}{2p} 
	G^\frac{1}{p} (x-z,t) G\left( y-z, \frac{p}{p-1} s \right) \\
	&\quad \times 
	\exp\left( -\frac{t}{4ps(t-s)} \left| y-\frac{s}{t}x -\frac{t-s}{t} z \right|^2 
	\right) \\
	&\leq 
	C \left( \frac{t}{t-s} \right)^\frac{n}{2p} 
	G^\frac{1}{p} (x-z,t) G\left( y-z, \frac{p}{p-1} s \right), 
\end{aligned}
\]
and so by the change of variables $\tau=ps/(p-1)$, 
\[
\begin{aligned}
	&\int_0^{t/2} \int_{\R^n} G(x-y,t-s) U^p (y,s) dyds \\
	&\leq 
	C \int_0^{t/2} \int_{\R^n} 
	\left( \int_{\R^n}
	\left( \frac{t}{t-s} \right)^\frac{n}{2p} 
	G^\frac{1}{p} (x-z,t) G\left( y-z, \frac{p}{p-1} s \right)
	u_0(z) dz
	\right)^p 
	dyds \\
	&\leq 
	C \int_0^{t/2} \int_{\R^n} 
	\left( \int_{\R^n}
	G^\frac{1}{p} (x-z,t) G\left( y-z, \frac{p}{p-1} s \right)
	u_0(z) dz
	\right)^p 
	dyds \\
	&\leq 
	C \int_0^{pt/(2(p-1))} \int_{\R^n} 
	\left( \int_{\R^n}
	G^\frac{1}{p} (x-z,t) G\left( y-z, \tau \right)
	u_0(z) dz
	\right)^p 
	dyd\tau. 
\end{aligned}
\]
By expanding the time interval and 
replacing $\tau$ with $s$, the lemma follows. 
\end{proof}

Let us estimate the integral of $V$ which is given in Lemma \ref{lem:former}. 

\begin{lemma}\label{lem:Vp}
For $x\in \R^n$ and $t>0$, 
\[
	\int_0^\infty \int_{\R^n} 
	\left( V(x,y,t,s)
	\right)^p  dyds 
	\leq C \| u_0\|_X^{p-1} U(x,t). 
\]
\end{lemma}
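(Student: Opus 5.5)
Fix $x\in\R^n$ and $t>0$, and regard $V(x,\cdot,t,\cdot)$ as a function of $(y,s)$. The idea is to write
\[
	\int_0^\infty\int_{\R^n} V^p\,dyds=\int_0^\infty\int_{\R^n} V\cdot V^{p-1}\,dyds,
\]
and move one factor of $V$ onto $u_0$, so that the test function $\phi$ in the definition \eqref{eq:Xdef} appears. Set $w(z):=G^{1/p}(x-z,t)$ and $\mu(z):=u_0(z)w(z)$, so that $V(y,s)=\int G(y-z,s)\mu(z)\,dz=e^{s\Delta}\mu(y)$. By Fubini,
\[
	\int_0^\infty\int V^p\,dyds=\int_{\R^n} u_0(z)\,w(z)\,\Psi(z)\,dz,
	\qquad
	\Psi(z):=\int_0^\infty\int_{\R^n} G(y-z,s)\,V^{p-1}(y,s)\,dyds=\cL[V^{p-1}](z).
\]

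The key step is to write $w\Psi$ as $\phi^{p/(p-1)}$ and apply the $X$-norm bound together with the backward-heat representation. Put $\phi:=(w\Psi)^{(p-1)/p}$. Instead of estimating $\phi$ directly, I would use the mechanism of Lemma \ref{lem:lim}, applied abstractly: the proof of Lemma \ref{lem:XLcounter}, read in reverse, together with \eqref{eq:back} and the definition of $\|\cdot\|_X$, gives a duality-type inequality
\[
	\int u_0\,\cL[h]^{p/(p-1)}\le C\|u_0\|_X\iint |h|^{p/(p-1)}.
\]
Concretely, for $h\ge0$ one sets $\phi=\cL[h]$ and uses that $h_\phi=-2\Delta e^{s\Delta}\cL[h]$. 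Then $\iint|h_\phi|^{p/(p-1)}\le C\iint h^{p/(p-1)}$ by maximal regularity: the denominator of \eqref{eq:Xdef} is the $\dot B^{2/p}_{p',p'}$-norm of $\phi$, and $\cL$ maps $L^{p'}(\R^n\times(0,\infty))$ into that space. This requires approximating $\phi$ by $C_0^\infty$ functions, using a cutoff as in \eqref{eq:forcutoff}. So the first ingredient is
\[
	\int u_0\,\cL[h]^{p'}\le C\|u_0\|_X\|h\|_{L^{p'}}^{p'},\qquad p'=\frac{p}{p-1}.
\]

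Now I apply Hölder in $z$:
\[
	\int u_0 w\Psi\le\Big(\int u_0 w^{p}\Big)^{1/p}\Big(\int u_0\Psi^{p'}\Big)^{1/p'}.
\]
Here $w^p=G(x-\cdot,t)$, so $\int u_0w^p=U(x,t)$. For the second factor take $h=V^{p-1}$, so that $\Psi=\cL[h]$ and
\[
	\int u_0\Psi^{p'}\le C\|u_0\|_X\iint V^{p}.
\]
Writing $I:=\iint V^p$, this gives
\[
	I\le U^{1/p}\big(C\|u_0\|_X I\big)^{(p-1)/p},
	\qquad\text{hence}\qquad
	I\le C\|u_0\|_X^{p-1}U(x,t),
\]
provided $I<\infty$. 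Finiteness of $I$ should follow from $u_0\in L^\infty$: we have $V\le\|u_0\|_\infty\,e^{s\Delta}w$ with $w\in L^1\cap L^\infty$, and $p>p_F$ makes $\int_1^\infty s^{-n(p-1)/2}\,ds$ finite.

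The main obstacle I expect is the duality inequality, that is, justifying $\cL\colon L^{p'}\to$ the space normed by the denominator of \eqref{eq:Xdef}, with $h_{\cL[h]}$ controlled in $L^{p'}$. This is a parabolic maximal-regularity estimate for $\Delta e^{s\Delta}\int_0^\infty e^{\tau\Delta}h(\tau)\,d\tau$, which I would handle via the heat-semigroup Besov characterization cited from \cite{BCDbook}, or directly by Schur-type bounds in time. A second difficulty is the density step: $\cL[h]$ is not compactly supported, so the cutoff argument of Lemma \ref{lem:MXcounter}, with $\chi_L$ and $L\to\infty$, is needed.
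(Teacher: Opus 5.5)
Your proposal is correct and is essentially the paper's proof. Both arguments split $V^p=V\cdot V^{p-1}$ and use Fubini and H\"older against $u_0\,dz$ to extract $U(x,t)^{1/p}$. Both then test the $X$-norm with $\cL[V^{p-1}]$ and bound the denominator by $C\int_0^\infty\int_{\R^n}V^p\,dyds$, using $\|\Delta e^{\sigma\Delta}\|_{L^{p/(p-1)}\to L^{p/(p-1)}}\le C\sigma^{-1}$ together with a Schur/Hilbert-type bound for the kernel $(s+\tau)^{-1}$ in time (the paper does this through $s=\tau\sigma$ and Minkowski), and finally absorb.

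The one difference is how finiteness is secured. The paper works on $I_m=(1/m,m)$ and lets $m\to\infty$, which both gives finiteness for the absorption and makes the test function smooth with Gaussian decay, so the cutoff $\chi_R$ is harmless. You instead get finiteness directly from $u_0\in L^\infty$ and $p>p_F$, which is valid. Without truncation, however, $\cL[V^{p-1}]$ need not be $C^\infty$ and decays only like a power of $|z|$. The approximation step you flag is therefore most easily handled by the paper's truncation followed by monotone convergence in $m$.
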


\begin{proof}
For $m\geq2$, we estimate $\int_{I_m} \int_{\R^n} V^p(x,y,t,s) dyds$, 
where $I_m:=(1/m,m)$. 
By $V^p=V\times V^{p-1}$, \eqref{eq:Fdef}, Fubini's theorem 
and the H\"older inequality, 
\begin{equation}\label{eq:Vpiint}
\begin{aligned}
	&\int_{I_m} \int_{\R^n} V^p(x,y,t,s)  dyds \\
	&= 
	\int_{I_m} \int_{\R^n} 
	\int_{\R^n}
	G^\frac{1}{p} (x-z,t) G\left( y-z, s \right)
	u_0(z) dz
	V^{p-1}(x,y,t,s) dyds \\
	&= 
	\int_{\R^n}G^\frac{1}{p} (x-z,t) u_0(z) 
	\left[ 
	\int_{I_m} \int_{\R^n} 
	G\left( y-z, s \right)
	V^{p-1}(x,y,t,s) dyds \right] dz \\
	&\leq 
	\left( \int_{\R^n}G (x-z,t) u_0(z) dz\right)^\frac{1}{p} 
	\left( W(x,t) \right)^\frac{p-1}{p}, 
\end{aligned}
\end{equation}
where 
\[
	W(x,t):= \int_{\R^n} u_0(z) 
	\left[ 
	\int_{I_m} \int_{\R^n} 
	G\left( y-z, s \right)
	V^{p-1}(x,y,t,s) dyds \right]^\frac{p}{p-1} dz. 
\]

Let us estimate $W$. 
We take $\chi\in C_0^\infty(\R^n)$ 
such that $0\leq \chi\leq 1$, 
$\chi(x)=1$ for $|x|\leq 1$ and 
$\chi(x)=0$ for $|x|\geq 2$. 
For $R>1$, set $\chi_R(x):= \chi(x/R)$. 
By the definition of $X$-norm  
in \eqref{eq:Xdef} with a compactly supported smooth test function 
\[
	z\mapsto 
	\chi_R(z) 
	\left[ \int_{I_m} \int_{\R^n} 
	G\left( y-z, s \right)
	V^{p-1}(x,y,t,s) dyds \right], 
\]
and then by letting $R\to\infty$ 
with the help of the Gaussian decay, we see that 
\[
\begin{aligned}
	W(x,t) &\leq 
	\|u_0\|_X 
	\int_0^\infty \int_{\R^n} 
	\left| 
	\int_{\R^n} \Delta_\eta G(\eta-z,\tau) \right. \\
	&\quad \times 
	\left. \left[ \int_{I_m} \int_{\R^n} 
	G\left( y-z, s \right)
	V^{p-1}(x,y,t,s) dyds
	\right] dz \right|^\frac{p}{p-1} d\eta d\tau, 
\end{aligned}
\]
where we have changed the variables of integration 
from $(y,s)$ to $(\eta,\tau)$ 
in \eqref{eq:Xdef}. 
Then Fubini's theorem and the semigroup property yield 
\[
\begin{aligned}
	&\int_{\R^n} \Delta_\eta G(\eta-z,\tau) 
	\left[ \int_{I_m} \int_{\R^n} 
	G\left( y-z, s \right)
	V^{p-1}(x,y,t,s) dyds
	\right] dz \\
	&= 
	\Delta_\eta  \int_{I_m} \int_{\R^n} V^{p-1}(x,y,t,s) 
	\int_{\R^n} G(\eta-z,\tau) 
	G\left( z-y, s \right) dz dy ds \\
	&= 
	\Delta_\eta  \int_{I_m} \int_{\R^n} V^{p-1}(x,y,t,s) 
	G(\eta-y,s+\tau) dy ds \\
	&= 
	\int_{I_m} \int_{\R^n} \Delta_\eta G(\eta-y,s+\tau)
	V^{p-1}(x,y,t,s)  dy ds. 
\end{aligned}
\]

By the above estimates, 
\[
\begin{aligned}
	\frac{W(x,t)}{\|u_0\|_X }
	&\leq 
	\int_0^\infty \int_{\R^n} 
	\left| 
	\int_{I_m} \int_{\R^n} \Delta_\eta G(\eta-y,s+\tau) 
	V^{p-1}(x,y,t,s)  dy ds
	\right|^\frac{p}{p-1} d\eta d\tau \\
	&= 
	\int_0^\infty \left[ \int_{\R^n} 
	\left| 
	\int_{I_m} \int_{\R^n} \Delta_\eta G(\eta-y,s+\tau) 
	V^{p-1}  dy ds
	\right|^\frac{p}{p-1} d\eta \right]^{\frac{p-1}{p}\cdot \frac{p}{p-1}} d\tau. 
\end{aligned}
\]
Then, applying Minkowski's inequality in the right-hand side gives 
\[
\begin{aligned}
	&\frac{W}{\|u_0\|_X }
	\leq  
	\int_0^\infty \left[ 
	\int_{I_m} \left( \int 
	\left| \int  \Delta_\eta G(\eta-y,s+\tau) 
	V^{p-1}  dy \right|^\frac{p}{p-1} 
	d\eta \right)^\frac{p-1}{p}
	ds \right]^{\frac{p}{p-1}} d\tau, 
\end{aligned}
\]
where $\int(\cdots):=\int_{\R^n}(\cdots)$ and $V^{p-1}=V^{p-1}(x,y,t,s)$. 
From the H\"older inequality and a direct computation as 
\[
	\int_{\R^n} |\Delta_\eta G(\eta-y,s+\tau)|  dy 
	\leq C (s+\tau)^{-1}
\]
for a constant $C>0$ depending only on $n$, 
and then from Fubini's theorem 
with the same direct computation as above,  it follows that 
\[
\begin{aligned}
	&\int_{\R^n} 
	\left| \int_{\R^n}  \Delta_\eta G(\eta-y,s+\tau) 
	V^{p-1}  dy \right|^\frac{p}{p-1} 
	d\eta  \\
	&\leq 
	\int_{\R^n} 
	\left| \left( \int_{\R^n} |\Delta_\eta G(\eta-y,s+\tau)|  dy \right)^\frac{1}{p} 
	\left( \int_{\R^n}  |\Delta_\eta G(\eta-y,s+\tau)|
	V^p  dy \right)^\frac{p-1}{p} 
	\right|^\frac{p}{p-1} 
	d\eta \\
	&\leq 
	C(s+\tau)^{-\frac{1}{p-1}}
	\int_{\R^n} 
	\int_{\R^n}  |\Delta_\eta G(\eta-y,s+\tau)|
	V^p(x,y,t,s)  dy d\eta  \\
	&\leq 
	C(s+\tau)^{-\frac{p}{p-1}}
	\int_{\R^n} V^p(x,y,t,s)  dy, 
\end{aligned}
\]
where $C>0$ depends only on $n$ and $p$. 
Thus, 
\[
	\frac{W(x,t)}{\|u_0\|_X }
	\leq  
	C \int_0^\infty \left[ 
	\int_{I_m} \left( 
	(s+\tau)^{-\frac{p}{p-1}}
	\int_{\R^n} V^p(x,y,t,s)  dy
	\right)^\frac{p-1}{p}
	ds \right]^{\frac{p}{p-1}} d\tau. 
\]

Recall that $I_m:=(1/m,m)$. 
Then by the change of variables $s=\tau\sigma$, 
\[
\begin{aligned}
	&\frac{W(x,t)}{\|u_0\|_X }
	\leq  
	C \int_0^\infty \left[ 
	\int_{1/(m\tau)}^{m/\tau}
	\left( 
	(\sigma+1)^{-\frac{p}{p-1}}
	\int_{\R^n} V^p  dy
	\right)^\frac{p-1}{p}
	d\sigma \right]^{\frac{p}{p-1}} d\tau \\
	&= 
	C \left( 
	\int_0^\infty \left[ 
	\int_0^\infty 
	(1+\sigma)^{-1} 
	\chi_{(\frac{1}{m\tau}, \frac{m}{\tau})}(\sigma)
	\left( 
	\int_{\R^n} V^p  dy
	\right)^\frac{p-1}{p}
	d\sigma \right]^{\frac{p}{p-1}} d\tau 
	\right)^{ \frac{p-1}{p} \cdot \frac{p}{p-1} }, 
\end{aligned}
\]
where $V=V(x,y,t,\tau\sigma)$ 
and $\chi_A$ 
is the characteristic function on a set $A$. 
Then,  by Minkowski's inequality 
with a simple computation, 
and then by the change of variables $\tau=\sigma^{-1} s$ for fixed $\sigma$, 
\[
\begin{aligned}
	\frac{W(x,t)}{\|u_0\|_X }
	&\leq   
	C \left( 
	\int_0^\infty \left( 
	\int_0^\infty 
	(1+\sigma)^{-\frac{p}{p-1}} 
	\chi_{(\frac{1}{m\tau}, \frac{m}{\tau})}^\frac{p}{p-1}(\sigma)
	\int_{\R^n} V^p  dy
	d\tau \right)^{\frac{p-1}{p}} d\sigma 
	\right)^\frac{p}{p-1}  \\
	&= 
	C \left( 
	\int_0^\infty 
	(1+\sigma)^{-1}
	\left( 
	\int_{1/(m\sigma)}^{m/\sigma}
	\int_{\R^n} V^p (x,y,t,\tau\sigma) dy
	d\tau \right)^{\frac{p-1}{p}} d\sigma 
	\right)^\frac{p}{p-1} \\
	&= 
	C \left( 
	\int_0^\infty 
	(1+\sigma)^{-1}
	\left( 
	\int_{1/m}^m
	\int_{\R^n} V^p (x,y,t,s) dy 
	\frac{ds}{\sigma} \right)^{\frac{p-1}{p}} d\sigma 
	\right)^\frac{p}{p-1}, 
\end{aligned}
\]
and so by computing the integral with respect to $\sigma$, 
\[
\begin{aligned}
	\frac{W(x,t)}{\|u_0\|_X }
	&\leq   
	C \left( 
	\int_0^\infty 
	(1+\sigma)^{-1}
	\sigma^{-\frac{p-1}{p}} d\sigma 
	\right)^\frac{p}{p-1} 
	\int_{1/m}^m
	\int_{\R^n} V^p (x,y,t,s) dy ds \\
	&\leq 
	C \int_{1/m}^m
	\int_{\R^n} V^p (x,y,t,s) dy ds, 
\end{aligned}
\]
where $C>0$ depends only on $n$ and $p$. 

The above estimate for $W$ together with \eqref{eq:Vpiint} implies that 
\[
\begin{aligned}
	&\int_{1/m}^m \int_{\R^n} V^p(x,y,t,s)  dyds \\
	&\leq 
	C \|u_0\|_X^\frac{p-1}{p}  
	\left( \int_{\R^n}G (x-z,t) u_0(z) dz\right)^\frac{1}{p} 
	\left( \int_{1/m}^m
	\int_{\R^n} V^p (x,y,t,s) dy ds
	 \right)^\frac{p-1}{p}. 
\end{aligned}
\]
This implies that 
\[
	\left( 
	\int_{1/m}^m \int_{\R^n} V^p(x,y,t,s)  dyds \right)^\frac{1}{p} 
	\leq 
	C \|u_0\|_X^\frac{p-1}{p}  
	U^\frac{1}{p}(x,t). 
\]
Hence letting $m\to\infty$ shows the lemma. 
\end{proof}

We recall that we split 
the time integral in \eqref{eq:super} 
into $\int_0^t = \int_0^{t/2} + \int_{t/2}^t$. 
Let us estimate the second one. 

\begin{lemma}\label{lem:latter}
For $x\in \R^n$ and $t>0$, 
\[
	\int_{t/2}^t  \int_{\R^n} G(x-y,t-s) 
	U^p (y,s) dyds 
	\leq 
	\left( \int_{t/2}^t \| U (\cdot,s)\|_{L^\infty(\R^n)}^{p-1} ds \right)
	U(x,t). 
\]
\end{lemma}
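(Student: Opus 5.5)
The plan is a direct pointwise estimate with no duality argument. Write $U^p(y,s) = U^{p-1}(y,s)\,U(y,s)$ and bound the first factor by its supremum in space:
\[
	U^p(y,s) \leq \| U(\cdot,s)\|_{L^\infty(\R^n)}^{p-1} U(y,s).
\]
This holds because $u_0\geq0$ gives $U\geq0$. Also $U(\cdot,s)$ is bounded, since $\|U(\cdot,s)\|_{L^\infty(\R^n)}\leq \|u_0\|_{L^\infty(\R^n)}$, so the right-hand side is finite. Inserting this into the integral gives
\[
	\int_{t/2}^t \int_{\R^n} G(x-y,t-s) U^p(y,s)\,dy\,ds
	\leq \int_{t/2}^t \| U(\cdot,s)\|_{L^\infty(\R^n)}^{p-1}
	\int_{\R^n} G(x-y,t-s) U(y,s)\,dy\,ds .
\]

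Next, I evaluate the inner spatial integral with the semigroup property. By \eqref{eq:Udef} and Fubini's theorem (all integrands are nonnegative),
\[
	\int_{\R^n} G(x-y,t-s) U(y,s)\,dy
	= \int_{\R^n} \left( \int_{\R^n} G(x-y,t-s) G(y-z,s)\,dy \right) u_0(z)\,dz
	= \int_{\R^n} G(x-z,t) u_0(z)\,dz = U(x,t).
\]
This identity holds for every $s\in(t/2,t)$, which is the point of restricting to the latter half of the time interval. The right-hand side $U(x,t)$ does not depend on $s$, so it comes out of the $s$-integral. What remains is exactly the claimed bound,
\[
	\left( \int_{t/2}^t \| U(\cdot,s)\|_{L^\infty(\R^n)}^{p-1} ds \right) U(x,t).
\]

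There is no real obstacle in this lemma. The only points to check are the nonnegativity used to apply Fubini's theorem and the finiteness of the supremum, and both have been covered above. The substantive work comes afterwards, when this lemma is used. There one needs a smallness bound such as $\|U(\cdot,s)\|_{L^\infty(\R^n)} \leq C\|u_0\|_X s^{-1/(p-1)}$. This should follow from $\|u_0\|_{M^{1,\lambda_c}}\leq C\|u_0\|_X$ (Lemma \ref{lem:MX}) together with a dyadic Gaussian decomposition, as in the proof of Lemma \ref{lem:MXcounter}. With that bound, the time integral $\int_{t/2}^t s^{-1}\,ds = \log 2$ is a constant times $\|u_0\|_X^{p-1}$, but that estimate is not needed for the present statement.
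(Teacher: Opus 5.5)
Your proof is correct and is the same argument as the paper's: write $U^p=U^{p-1}U$, bound $U^{p-1}(y,s)$ by $\|U(\cdot,s)\|_{L^\infty(\R^n)}^{p-1}$, then use Fubini's theorem and the semigroup property to get $U(x,t)$. One small correction to your commentary: the semigroup identity holds for every $s\in(0,t)$, so it is not the reason for restricting to $(t/2,t)$. The restriction matters only because $\|U(\cdot,s)\|_{L^\infty(\R^n)}^{p-1}\lesssim s^{-1}$ is not integrable near $s=0$, as you yourself note at the end.
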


\begin{proof}
By $U^p = U^{p-1}\times U$, \eqref{eq:Udef}, 
Fubini's theorem and the semigroup property, 
\[
\begin{aligned}
	&\int_{t/2}^t  \int_{\R^n} G(x-y,t-s) 
	U^p (y,s) dyds  \\
	&\leq 
	\int_{t/2}^t \| U (\cdot,s)\|_{L^\infty(\R^n)}^{p-1} 
	\int_{\R^n} G(x-y,t-s) 
	\int_{\R^n} G(y-z,s) u_0(z) dz dyds \\
	&=
	\int_{t/2}^t \| U (\cdot,s)\|_{L^\infty(\R^n)}^{p-1} 
	\int_{\R^n} u_0(z) \int_{\R^n}
	G(x-y,t-s) 
	G(y-z,s)  dy dz ds \\
	&= 
	\int_{t/2}^t \| U (\cdot,s)\|_{L^\infty(\R^n)}^{p-1} 
	\int_{\R^n} 
	G(x-z,t) u_0(z) dz ds. 
\end{aligned}
\]
Again by \eqref{eq:Udef}, the lemma follows. 
\end{proof}

In view of Lemma \ref{lem:latter}, 
we need a pointwise estimate of $U$.

\begin{lemma}\label{lem:Uesti}
For $x\in\R^n$ and $t>0$, 
\[
	U(x,t) \leq C \|u_0\|_X t^{-\frac{1}{p-1}}. 
\]
\end{lemma}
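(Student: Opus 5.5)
The plan is to pass through the Morrey norm. Lemma \ref{lem:MX} gives $\|u_0\|_{M^{1,\lambda_c}}\leq C\|u_0\|_X$, and a Morrey bound is exactly what controls the heat extension pointwise at the scaling-critical rate. This avoids designing a new test function in \eqref{eq:Xdef}; all the work with the $X$-norm is already contained in Lemma \ref{lem:MX}.

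Fix $x\in\R^n$ and $t>0$, and split the integral defining $U(x,t)$ dyadically around $x$ at scale $\sqrt t$. This is the same decomposition used for $U_g$ in Lemmas \ref{lem:XLcounter} and \ref{lem:MXcounter}:
\[
U(x,t)\leq C t^{-\frac n2}\int_{B_{\sqrt t}(x)}u_0\,dy+Ct^{-\frac n2}\sum_{k=1}^\infty e^{-4^{k-2}}\int_{B_{2^k\sqrt t}(x)}u_0\,dy .
\]
On the annulus $2^{k-1}\sqrt t\le|x-y|\le 2^k\sqrt t$ the Gaussian is bounded by $e^{-4^{k-2}}$, and each annulus is contained in the ball $B_{2^k\sqrt t}(x)$.

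Next, bound each ball integral by the Morrey norm:
\[
\int_{B_r(x)}u_0\,dy\leq r^{n-\frac{2}{p-1}}\|u_0\|_{M^{1,\lambda_c}}\leq C r^{n-\frac2{p-1}}\|u_0\|_X,
\]
taking $r=2^k\sqrt t$. Substituting this gives
\[
U(x,t)\leq C\|u_0\|_X\, t^{-\frac n2}\, t^{\frac n2-\frac1{p-1}}\Big(1+\sum_{k\ge1}e^{-4^{k-2}}2^{k(n-\frac2{p-1})}\Big).
\]
The series converges because the Gaussian factor beats any geometric growth. This yields $U(x,t)\le C\|u_0\|_X t^{-1/(p-1)}$ with $C$ depending only on $n$ and $p$.

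There is no real obstacle here. The only points to check are that the exponent $n-2/(p-1)$ is positive, which holds since $p>p_F$, and that the constant from Lemma \ref{lem:MX} depends only on $n$ and $p$. Both are already established.
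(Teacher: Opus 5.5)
Your proof is correct, but it takes a different route from the paper. The paper never passes through the Morrey norm. It writes $U(x,t)=\int u_0(z)\,[G^{(p-1)/p}(x-z,t)]^{p/(p-1)}\,dz$ and inserts the test function $z\mapsto\chi_R(z)G^{(p-1)/p}(x-z,t)$ into \eqref{eq:Xdef}, then lets $R\to\infty$. It then computes the denominator explicitly via the identity $\int G(y-z,s)G^{(p-1)/p}(x-z,t)\,dz=(\frac{p}{p-1})^{n/2}(4\pi t)^{\frac{n}{2p}}G(y-x,s+\frac{p}{p-1}t)$, which yields the factor $t^{\frac{n}{2(p-1)}}\cdot t^{-\frac{n}{2(p-1)}-\frac{1}{p-1}}$.

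You instead combine Lemma \ref{lem:MX} with the standard Morrey-to-heat-semigroup estimate. This is essentially the same computation the paper carries out for $g$ in Lemma \ref{lem:MXcounter}. Your route buys two things. First, it shows the decay bound only needs $U(x,t)\leq C\|u_0\|_{M^{1,\lambda_c}}t^{-1/(p-1)}$, so the genuinely $X$-specific ingredient in the existence proof is Lemma \ref{lem:Vp}, not this lemma. Second, every test function you use lies in $C_0^\infty(\R^n)$, so you avoid the $R\to\infty$ limiting step that the paper justifies only briefly by Gaussian decay. The paper's route, in turn, is self-contained within the duality structure of \eqref{eq:Xdef} and serves as the template reused in Lemma \ref{lem:Vp}.

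One minor remark: your series converges regardless of the sign of $n-2/(p-1)$, since $e^{-4^{k-2}}$ dominates $2^{k\alpha}$ for every real $\alpha$. The positivity check you mention is therefore not actually needed.
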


\begin{proof}
We note from \eqref{eq:Udef} that 
\[
	U(x,t)= \int_{\R^n} u_0(z) \left[ G^\frac{p-1}{p}(x-z,t) 
	\right]^\frac{p}{p-1} dz. 
\]
For $R>1$, 
we take $\chi_R\in C_0^\infty(\R^n)$ as in Lemma \ref{lem:Vp}. 
By the definition of $X$-norm  
in \eqref{eq:Xdef} with a compactly supported smooth test function 
\[
	z\mapsto 
	\chi_R(z) 
	G^\frac{p-1}{p}(x-z,t), 
\]
and then by letting $R\to\infty$ 
with the help of the Gaussian decay, 
\[
\begin{aligned}
	U(x,t) 
	&\leq 
	\|u_0\|_X \int_0^\infty \int_{\R^n} 
	\left| 
	\int_{\R^n} \Delta_y G(y-z,s) 
	G^\frac{p-1}{p} (x-z,t) dz \right|^\frac{p}{p-1} dy ds \\
	&=
	\|u_0\|_X \int_0^\infty \int_{\R^n} 
	\left| 
	\Delta_y  \int_{\R^n} G(y-z,s) 
	G^\frac{p-1}{p} (x-z,t) dz \right|^\frac{p}{p-1} dy ds. 
\end{aligned}
\]

From the definition of $G$ and the semigroup property, it follows that 
\[
\begin{aligned}
	&\int_{\R^n} G(y-z,s) 
	G^\frac{p-1}{p} (x-z,t) dz 
	= \int_{\R^n} G(y-z,s) 
	(4\pi t)^{-\frac{n(p-1)}{2p}}
	e^{-\frac{(p-1)|x-z|^2}{4pt}} dz  \\
	&= 
	\left( \frac{p}{p-1} \right)^{\frac{n}{2}} 
	(4\pi t)^{\frac{n}{2}-\frac{n(p-1)}{2p}}
	\int_{\R^n} G(y-z,s) 
	\left( 4\pi \left(\frac{p}{p-1} t \right) \right)^{-\frac{n}{2}}
	e^{-\frac{(p-1)|x-z|^2}{4pt}} dz  \\
	&= 
	\left( \frac{p}{p-1} \right)^{\frac{n}{2}} 
	(4\pi t)^\frac{n}{2p}
	\int_{\R^n} G(y-z,s) 
	G\left( z-x, \frac{p}{p-1} t\right) dz \\
	&= \left( \frac{p}{p-1} \right)^{\frac{n}{2}} 
	(4\pi t)^\frac{n}{2p}
	G\left( y-x, s+ \frac{p}{p-1} t\right). 
\end{aligned}
\]
Thus, by direct computation, 
\[
\begin{aligned}
	U(x,t) 
	&\leq 
	C \|u_0\|_X \int_0^\infty \int_{\R^n} 
	\left| 
	t^\frac{n}{2p}
	\Delta_y 
	G\left( y-x, s+ \frac{p}{p-1} t\right) \right|^\frac{p}{p-1} dy ds \\
	&= 
	C \|u_0\|_X t^\frac{n}{2(p-1)}
	\int_0^\infty \int_{\R^n} 
	\left| 
	\Delta_y 
	G\left( y, s+ \frac{p}{p-1} t\right) \right|^\frac{p}{p-1} dy ds \\
	&\leq 
	C \|u_0\|_X t^\frac{n}{2(p-1)}
	\int_0^\infty \int_{\R^n} 
	\left| 
	\left( s+ \frac{p}{p-1} t\right)^{-\frac{n}{2}-1}
	e^{-\frac{|y|^2}{8(s+ \frac{p}{p-1} t)}}
	\right|^\frac{p}{p-1} dy ds \\
	&\leq 
	C \|u_0\|_X t^\frac{n}{2(p-1)}
	\int_0^\infty 
	\left( s+ \frac{p}{p-1} t
	\right)^{\frac{n}{2}-(\frac{n}{2}+1)\frac{p}{p-1}} ds, 
\end{aligned}
\]
where $C>0$ depends only on $n$ and $p$. 
Since 
\[
\begin{aligned}
	\int_0^\infty 
	\left( s+ \frac{p}{p-1} t
	\right)^{\frac{n}{2}-(\frac{n}{2}+1)\frac{p}{p-1}} ds
	&= 
	t^{-\frac{n}{2(p-1)}-\frac{1}{p-1}} 
	\int_0^\infty 
	\left( \sigma+ \frac{p}{p-1} 
	\right)^{-\frac{n}{2(p-1)}-\frac{p}{p-1}} d\sigma \\
	&\leq 
	C t^{-\frac{n}{2(p-1)}-\frac{1}{p-1}}, 
\end{aligned}
\]
the lemma follows. 
\end{proof}

By combining the above estimates, we obtain the following: 

\begin{lemma}\label{lem:Duhamel}
For $x\in \R^n$ and $t>0$, 
\[
	\int_0^t \int_{\R^n} G(x-y,t-s) U^p(y,s) dyds 
	\leq C\|u_0\|_X^{p-1} U(x,t). 
\]
\end{lemma}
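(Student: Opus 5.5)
We split the time integral as $\int_0^t=\int_0^{t/2}+\int_{t/2}^t$ and bound each piece by a multiple of $\|u_0\|_X^{p-1}U(x,t)$ using the lemmas just proved.

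\emph{First piece.} Lemma \ref{lem:former} gives
\[
\int_0^{t/2}\int_{\R^n}G(x-y,t-s)U^p(y,s)\,dyds\le C\int_0^\infty\int_{\R^n}V^p(x,y,t,s)\,dyds .
\]
Lemma \ref{lem:Vp} bounds the right-hand side by $C\|u_0\|_X^{p-1}U(x,t)$. No further work is needed here.

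\emph{Second piece.} Lemma \ref{lem:latter} gives
\[
\int_{t/2}^t\int_{\R^n}G(x-y,t-s)U^p(y,s)\,dyds\le\Bigl(\int_{t/2}^t\|U(\cdot,s)\|_{L^\infty(\R^n)}^{p-1}ds\Bigr)U(x,t).
\]
To bound the bracket, apply Lemma \ref{lem:Uesti} at time $s$: $\|U(\cdot,s)\|_{L^\infty}^{p-1}\le C\|u_0\|_X^{p-1}s^{-1}$. Then
\[
\int_{t/2}^t s^{-1}\,ds=\log 2 .
\]
The scaling exponent $-\tfrac{1}{p-1}$ in Lemma \ref{lem:Uesti} is exactly what makes this integral scale-free. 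So the second piece is at most $C\|u_0\|_X^{p-1}U(x,t)$.

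Adding the two bounds gives the claim. There is no real obstacle, because all the analytic difficulty sits in Lemmas \ref{lem:Vp} and \ref{lem:Uesti}. The only point to check is that these lemmas hold for every $x\in\R^n$ and $t>0$ with constants depending only on $n$ and $p$, which is how they are stated.
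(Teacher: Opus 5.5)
Your proposal is correct and follows the paper's proof essentially verbatim. You split at $t/2$, bound the early part via Lemmas \ref{lem:former} and \ref{lem:Vp}, and bound the late part via Lemma \ref{lem:latter} combined with Lemma \ref{lem:Uesti}, using $\int_{t/2}^t s^{-1}\,ds=\log 2$.
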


\begin{proof}
From Lemmas \ref{lem:former}, 
\ref{lem:Vp}, \ref{lem:latter} and \ref{lem:Uesti}, 
it follows that 
\[
\begin{aligned}
	&\int_0^t \int_{\R^n} G(x-y,t-s) U^p(y,s) dyds 
	= \int_0^{t/2}  \int_{\R^n} (\cdots)  
	+\int_{t/2}^t  \int_{\R^n} (\cdots)    \\
	&\leq 
	C \| u_0\|_X^{p-1} U(x,t)
	+ C \|u_0\|_X^{p-1} 
	\left( \int_{t/2}^t  s^{-1} ds \right)
	U(x,t). 
\end{aligned}
\]
This shows the lemma. 
\end{proof}

We check that $\overline{u}$ defined by \eqref{eq:super} 
is a supersolution. 

\begin{lemma}\label{lem:ou}
If $\|u_0\|_X$ is small depending only on $n$ and $p$, then  
$\overline{u}$ satisfies \eqref{eq:inteq}. 
\end{lemma}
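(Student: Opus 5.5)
The plan is to compare $\Phi[\overline{u}]$ with $\overline{u}$ pointwise, reducing everything to Lemma \ref{lem:Duhamel}. Write
\[
	w(x,t):= \int_0^t \int_{\R^n} G(x-y,t-s) U^p(y,s) dyds,
	\qquad \overline{u} = U + 2^p w .
\]
By Lemma \ref{lem:Duhamel}, $w\leq C_0\|u_0\|_X^{p-1} U$ with $C_0$ depending only on $n$ and $p$. The first step is therefore a pointwise comparison: if $\|u_0\|_X$ is so small that $2^p C_0 \|u_0\|_X^{p-1}\leq 1$, then
\[
	\overline{u}\leq U + 2^p C_0\|u_0\|_X^{p-1} U \leq 2U .
\]

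Next I will insert this bound into the nonlinear term, so that $\overline{u}^p\leq 2^p U^p$. Positivity of the heat kernel then gives
\[
	\int_0^t\int_{\R^n} G(x-y,t-s)\overline{u}^p(y,s)dyds \leq 2^p w(x,t).
\]
Adding $U$ to both sides yields $\Phi[\overline{u}]\leq U+2^p w=\overline{u}$, which is exactly \eqref{eq:inteq}. Since the smallness threshold $(2^pC_0)^{-1/(p-1)}$ depends only on $n$ and $p$, this also matches the statement of the lemma.

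I expect no real obstacle beyond checking that the hypotheses of Lemma \ref{lem:Duhamel} apply. That lemma is stated for all $x$ and $t>0$ with a uniform constant, so they do. It also remains to note that all quantities are finite and nonnegative: $u_0\in L^\infty$ gives $U\leq\|u_0\|_{L^\infty}$, and the bound $w\leq C_0\|u_0\|_X^{p-1}U$ shows that $\overline{u}$ is finite. The only delicate point is the choice of the factor $2^p$ in \eqref{eq:super}, which makes the constants close. It is designed so that the bound $\overline{u}\leq 2U$ exactly reproduces the coefficient $2^p$ in front of $w$.
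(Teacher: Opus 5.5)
Your proposal is correct and matches the paper's proof. Both use Lemma \ref{lem:Duhamel} to get $\overline{u}\leq (1+C\|u_0\|_X^{p-1})U$, insert this into the nonlinear Duhamel term, and absorb the resulting factor into the coefficient $2^p$ by smallness. The only cosmetic difference is that you fix the explicit threshold $2^pC_0\|u_0\|_X^{p-1}\leq 1$, which gives $\overline{u}\leq 2U$ directly, whereas the paper requires $(1+C'\|u_0\|_X^{p-1})^p\leq 2^p$.
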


\begin{proof}
From the definition of $\overline{u}$ in \eqref{eq:super} 
and Lemma \ref{lem:Duhamel}, it follows that 
\begin{equation}\label{eq:ouU}
	\overline{u}(x,t)
	\leq 
	(1+C\|u_0\|_X^{p-1}) U(x,t) 
\end{equation}
for $x\in\R^n$ and $t>0$. 
By applying this estimate to the definition of $\Phi$ in \eqref{eq:inteq}, 
\[
	\Phi[\overline{u}](x,t) 
	\leq  U(x,t) + (1+C'\|u_0\|_X^{p-1})^p \int_0^t \int_{\R^n} 
	G(x-y,t-s)  U^p(y,s)  dyds, 
\]
where $C'>0$ depends only on $n$ and $p$. 
Therefore, when $\|u_0\|_X$ is small, 
we see again from the definition of $\overline{u}$ in \eqref{eq:super} that 
\[
	\Phi[\overline{u}](x,t) 
	\leq  U(x,t) + 2^p \int_0^t \int_{\R^n} 
	G(x-y,t-s) U^p(y,s)  dyds 
	= \overline{u}(x,t). 
\]
Hence $\overline{u}$ is a supersolution. 
\end{proof}

We are now in a position to prove Proposition \ref{pro:ex}. 

\begin{proof}[Proof of Proposition \ref{pro:ex}]
Once we construct a supersolution, we can also construct a solution, 
see \cite[Section 2]{RS13} for instance.
Thus, we give an outline. 
By Lemma \ref{lem:ou} and \eqref{eq:ouU}, 
there exists $c>0$ such that 
if $\|u_0\|_X\leq c$, then $\overline{u}$ satisfies \eqref{eq:inteq} and 
$\overline{u}\leq 2U$. 
Set $u_1(x,t):= U(x,t)$ and 
\[
	u_k(x,t) := 
	U(x,t) + \int_0^t \int_{\R^n} G(x-y,t-s) u_{k-1}^p (y,s) dyds 
	\quad (k=2,3, \ldots). 
\]
Then, we can check inductively that 
\[
	0\leq u_1(x,t) \leq u_2(x,t) \leq 
	\cdots \leq u_k(x,t) \leq \cdots \leq \overline{u}(x,t) \leq 2U(x,t) 
\]
for $x\in\R^n$ and $t>0$. 
Then, the pointwise limit $u_*(x,t):=\lim_{k\to\infty} u_k(x,t)$ 
exists for each $x\in \R^n$ and $t>0$. 
The monotone convergence theorem guarantees that 
$u_*=\Phi[u_*]$ in $\R^n\times (0,\infty)$. 
By  $u_*\leq \overline{u}\leq 2U$ and Lemma \ref{lem:Uesti}, 
\[
	u_*(x,t)\leq C \|u_0\|_X t^{-\frac{1}{p-1}} 
	\quad \mbox{ for } t>0. 
\]
Moreover, since $0\leq u_* \leq 2U \leq 2\|u_0\|_{L^\infty(\R^n)}$, 
we can check that 
$u_*$ is a classical solution, 
and so uniqueness is standard. 
The proof is complete. 
\end{proof}

Finally, we prove our main result.

\begin{proof}[Proof of Theorem \ref{th:main}]
Proposition \ref{pro:esti} implies that 
if $\|u_0\|_X\geq C_p$ for some $C_p>0$ depending only on $p$, 
then  $T_*(u_0)<\infty$. 
Moreover, Proposition \ref{pro:ex} with uniqueness 
also implies that 
if $\|u_0\|_X\leq c_{n,p}$ for some $c_{n,p}>0$ 
depending only on $n$ and $p$, 
then  $T_*(u_0)=\infty$. 
Hence $\|\cdot\|_X$ is a separating norm. 
By the $1$-homogeneity of norms, 
$\|\cdot\|_X$ is unique up to equivalence on nonnegative data. 
The proof is complete. 
\end{proof}

\subsection*{Acknowledgments}
The author was supported 
by JSPS KAKENHI Grant Numbers 22H01131, 22KK0035 and 23K12998.

\subsection*{AI Usage Disclosure}
ChatGPT was used in the preparation and revision of this manuscript. 
The author reviewed the entire manuscript, made substantial revisions throughout 
the manuscript and takes full responsibility for the final manuscript.

\end{document}